\documentclass[12pt]{amsart}

\usepackage{pinlabel}
\usepackage{mathrsfs,verbatim}
\usepackage[super]{nth}
\usepackage[colorinlistoftodos,color=red!70]{todonotes}
\usepackage[margin=1in]{geometry}
\usepackage{amsmath,amssymb,url}

% Start of Metadata

\title[Analysis on Surreal Numbers]{Analysis on Surreal Numbers}

%First Author

\author{Simon Rubinstein-Salzedo}

%\givenname{Simon}
%\surname{Rubinstein-Salzedo}
\address{Department of Statistics, Stanford University, 390 Serra Mall, Stanford, CA 94305, USA}
\email{simonr@stanford.edu}
%\urladdr{}

%Second Author

\author{Ashvin Swaminathan}
%\givenname{Ashvin}
%\surname{Swaminathan}
\address{Department of Mathematics, Harvard College, 1 Oxford Street, Cambridge, MA 02138, USA}
\email{aaswaminathan@college.harvard.edu}
%\urladdr{}

\keywords{surreal numbers, analysis, functions, sequences, calculus, limits, derivatives, series, integrals}
%\subject{primary}{msc2010}{26E35}
%\subject{secondary}{msc2010}{03H99,\,54J05}

%\arxivreference{1307.7392v1}
%\arxivpassword{pfnsq}

% \volumenumber{}
% \issuenumber{}
% \publicationyear{}
% \papernumber{}
% \startpage{}
% \endpage{}
% \doi{}
% \MR{}
% \Zbl{}
% \received{}
% \revised{}
% \accepted{}
% \published{}
% \publishedonline{}
% \proposed{}
% \seconded{}
% \corresponding{}
% \editor{}
% \version{}

%End of Metadata

% Start of User-defined Macros

\newtheorem{theorem}{Theorem}
\newtheorem{proposition}[theorem]{Proposition}
\newtheorem{lemma}[theorem]{Lemma}

\newtheorem{conjecture}[theorem]{Conjecture}

\theoremstyle{definition}
\newtheorem{defn}[theorem]{Definition}
\newtheorem{example}[theorem]{Example}

\theoremstyle{remark}
\newtheorem*{remark}{Remark}

\makeatletter
\newcommand*{\defeq}{\mathrel{\rlap{%
                     \raisebox{0.3ex}{$\m@th\cdot$}}%
                     \raisebox{-0.3ex}{$\m@th\cdot$}}%
                     =}
\makeatother

\newcommand{\beas}{\begin{eqnarray*}}
\newcommand{\eeas}{\end{eqnarray*}}

\newcommand{\bm}[1]{{\mbox{\boldmath $#1$}}}

\newcommand{\wt}{\widetilde}

\newcommand{\nc}{\newcommand}

\nc{\nlog}{\mathrm{nlog}}
\nc{\ul}{\underline}
\nc{\ol}{\overline}
\nc{\BR}{\mathbb{R}}
\nc{\BZ}{\mathbb{Z}}

% End of User-defined Macros

\begin{document}

\begin{abstract}
\noindent The class $\mathbf{No}$ of surreal numbers, which John Conway discovered while studying combinatorial games, possesses a rich numerical structure and shares many arithmetic and algebraic properties with the real numbers. Some work has also been done to develop analysis on $\mathbf{No}$. In this paper, we extend this work with a treatment of functions, limits, derivatives, power series, and integrals.

We propose surreal definitions of the arctangent and logarithm functions using truncations of Maclaurin series. Using a new representation of surreals, we present a formula for the limit of a sequence, and we use this formula to provide a complete characterization of convergent sequences and to evaluate certain series and infinite Riemann sums via extrapolation. A similar formula allows us to evaluates limits (and hence derivatives) of functions.

By defining a new topology on $\mathbf{No}$, we obtain the Intermediate Value Theorem even though $\mathbf{No}$ is not Cauchy complete, and we prove that the Fundamental Theorem of Calculus would hold for surreals if a consistent definition of integration exists. Extending our study to defining other analytic functions, evaluating power series in generality, finding a consistent definition of integration, proving Stokes' Theorem to generalize surreal integration, and studying differential equations remains open.
\end{abstract}
% \begin{asciiabstract}
% \noindent  The class $\mathbf{No}$ of surreal numbers, which John Conway discovered while studying combinatorial games, possesses a rich numerical structure and shares many arithmetic and algebraic properties with the real numbers. Some work has also been done to develop analysis on surreal numbers. In this paper, we extend such work with a treatment of functions, limits, derivatives, power series, and integrals.

% We propose surreal definitions of arctangent and logarithm using truncations of Maclaurin series. Using a new representation of surreals, we present a formula for the limit of a sequence, and we use this formula to provide a complete characterization of convergent sequences and to evaluate certain series and infinite Riemann sums via extrapolation. A similar formula allows us to evaluates limits (and hence derivatives) of functions.

% By defining a new topology on $\mathbf{No}$, we obtain the Intermediate Value Theorem even though $\mathbf{No}$ is not Cauchy complete, and we prove that the Fundamental Theorem of Calculus would hold for surreals if a consistent definition of integration exists. Extending our study to defining other analytic functions, evaluating power series in generality, finding a consistent definition of integration, proving Stokes' Theorem to generalize surreal integration, and studying differential equations remains open.
% \end{asciiabstract}
\maketitle
%Start of Main Body of Article

\section{Introduction}\label{intro}

\noindent Since their invention by John Conway in 1972, surreal numbers have intrigued mathematicians who wanted to investigate the behavior of a new number system. Even though surreal numbers were constructed out of attempts to describe the endgames of two-player combinatorial games like Go and Chess, they form a number system in their own right and have many properties in common with real numbers. Conway demonstrated in his book~\cite{Con01} that out of a small collection of definitions, numerous arithmetic and algebraic similarities could be found between reals and surreals. Using a creation process, starting with the oldest number (called ``$0$'') and progressing toward more nontrivial numbers, Conway proved that the surreals contain both the reals and the ordinals. After defining basic arithmetic operations (comparison, negation, addition, and multiplication) for surreals, he showed that the surreals contain never-before-seen numbers, such as $\omega^5-\left(\omega+3\pi\right)^2 \times \omega^{-\omega}$, that arise out of combining reals and ordinals ($\omega$ is the first transfinite ordinal).
By determining the properties of surreal arithmetic operations, Conway studied the algebraic structure of surreals, concluding that the surreals form an object (called ``$\mathbf{No}$'') that shares all properties with a totally ordered field, except that its elements form a proper class (in this regard, we write that $\mathbf{No}$ is a ``Field''). With surreal arithmetic and algebra in place, developing analysis on $\mathbf{No}$ is the next step in building the theory of surreal numbers. Below, we discuss earlier work on surreal analysis and introduce our own results.

The study of surreal functions began with polynomials, which were constructed using the basic arithmetic operations that Conway introduced in his book~\cite{Con01}. Subsequently, Gonshor found a definition of $\exp(x)$ that satisfies such fundamental properties as $\exp(x+y)=\exp(x)\cdot\exp(y)$ for all $x, y \in \mathbf{No}$ \cite{Gon86}. Moreover, Kruskal defined $1/x$, and Bach defined $\sqrt{x}$ \cite{Con01}. In this paper, we present a more rigorous method of constructing functions inductively, and we show that Gonshor's method for defining the exponential function can be utilized to define arctangent and logarithm, as is independently observed by Costin in~\cite{Cos12}.

Conway and Norton initiated the study of surreal integration by introducing a preliminary analogue of Riemann integration on surreals, as described in~\cite{Con01}. The ``Conway-Norton'' integral failed to have standard properties of real integration, however, such as translation invariance: $\int_{a}^{b} f(x) dx = \int_{a-t}^{b-t} f(x+t) dx$, for any surreal function $f(x)$ and $a,b,t \in \mathbf{No}$. While Fornasiero fixed this issue in~\cite{For04}, the new integral, like its predecessor, yields $\exp(\omega)$ instead of the desired $\exp(\omega) -1$ for $\int_{0}^{\omega}\exp(x) dx$.

One way of approaching the problem of integration on surreals is to give meaning to infinitely long ``Riemann'' sums.\footnote{Because the surreals contain the ordinals, ``Riemann'' sums of infinite length are considered, unlike in real analysis, where the term ``Riemann'' requires sums of finite length.} To do this, we need to know how to evaluate limits of surreal sequences. In the literature, surreal sequences are restricted to a limit-ordinal number of terms, but such sequences are not convergent in the classical $\varepsilon$-$\delta$ sense. Moreover, earlier work has not defined the limit, and hence the derivative, of a surreal function. In this paper, using a new representation of surreals, we obtain a formula for the limit of a surreal sequence. Although we show that $\mathbf{No}$ is not Cauchy complete, we use this formula to completely characterize convergent sequences. Our formula for the limit of a sequence also gives us a method of evaluating certain series and infinitely long ``Riemann'' sums by extrapolation from the naturals to the ordinals, and we show that this extrapolative method can correctly integrate the exponential function. We also present a method of finding limits (and hence derivatives) of surreal functions.

The difficulty of creating a definition of integration for surreals is attributed in~\cite{Cos10} to the fact that the topological space of surreals is totally disconnected when the standard notion of local openness is used; i.e.~given any locally open interval $(a,b) \in \mathbf{No}$, we have $(a,b) = (a,g) \cup (g,b)$, where $g$ is a gap between $a$ and $b$ and the intervals $(a,g)$ and $(g,b)$ are locally open. In this paper, to help deal with this difficulty, we define a new topology on $\mathbf{No}$, in which $\mathbf{No}$ is connected. Using this topology, we prove that the Intermediate Value Theorem holds even though $\mathbf{No}$ is not Cauchy complete, and we also prove that the Extreme Value Theorem holds for certain continuous functions. We then show that the Fundamental Theorem of Calculus would hold for surreals if we have a definition of integration that satisfies certain necessary properties.

The rest of this paper is organized as follows. Section~\ref{defs} discusses definitions and basic properties of surreals as well as our new topology on $\mathbf{No}$, and Section~\ref{newfunc} introduces our definitions for the arctangent and logarithm functions. Section~\ref{seqs} explains our method of evaluating limits of sequences and discusses the characteristics of convergent sequences. Section~\ref{diff} discusses limits of functions and includes the Intermediate Value Theorem, and Section~\ref{ips} concerns series, Riemann sums, and the Fundamental Theorem of Calculus. Finally, Section~\ref{conc} concludes the paper with a discussion of open problems.

\section{Definitions and Basic Properties}\label{defs}
\noindent In this section, we review all basic definitions and properties of surreals and introduce our own definitions and conventions. Throughout the rest of the paper, unqualified terms such as ``number,'' ``sequence,'' and ``function'' refer to surreal objects only. Any reference to real objects will include the descriptor ``real'' to avoid ambiguity. For an easy introduction to surreal numbers, see Knuth's book~\cite{Knu74}.

\subsection{Numbers} \label{defsnums}
\noindent Conway constructed numbers recursively, as described in the following definition:
\begin{defn}[Conway, \cite{Con01}] \label{defs-def-1}
(1) Let $L$ and $R$ be two sets of numbers. If there do not exist $a \in L$ and $b \in R$ such that $a \geq b$, there is a number denoted as $\{L\mid R\}$ with some name $x$.
(2) For every number named $x$, there is at least one pair of sets of numbers $(L_x,R_x)$ such that $x = \{L_x\mid R_x\}$.
\end{defn}
As is suggested by their names, $L_x$ is the left set of $x$, and $R_x$ is the right set of $x$. Conway also represents the number $x = \{L_x\mid R_x\}$ as $x = \{x^L\mid x^R\}$, where the left options $x^L$ and right options $x^R$ run through all members of $L_x$ and $R_x$, respectively. (We explain the meaning of assigning a name $x$ to a form $\{L \mid R\}$ later.) Having introduced the construction of numbers, we now consider properties of surreals. Let $\mathbf{No}$ be the class of surreal numbers, and for all $a \in \mathbf{No}$, let $\mathbf{No}_{<a}$ be the class of numbers $<a$ and $\mathbf{No}_{>a}$ be the class of numbers $>a$. The following are the basic arithmetic properties of numbers:
\begin{defn}[Conway, \cite{Con01}] \label{defs-def-2}
Let $x_1,x_2 \in \mathbf{No}$. Then,

\begin{enumerate}\setlength{\itemsep}{0mm}
    \item[\rm 1.] {\bf Comparison:} $x_1 \leq x_2$ iff (no $x_1^L \geq x_2$ and no $ x_2^R \leq x_1$); $x_1 \geq x_2$ iff $x_2 \leq x_1$; $x_1=x_2$ iff $(x_1 \geq x_2$ and $x_1 \leq x_2)$; $x_1 < x_2$ iff $(x_1 \leq x_2$ and $x_1 \not\geq x_2)$; $x_1 > x_2$ iff $x_2 < x_1$.
    \item[\rm 2.] {\bf Negation:} $-x_1 = \{-x_1^R\mid -x_1^L\}$.
    \item[\rm 3.] {\bf Addition:} $x_1+x_2=\{x_1^L +x_2, x_1+x_2^L\mid x_1^R+x_2, x_1+x_2^R\}$.
    \item[\rm 4.] {\bf Multiplication:} $x_1 \times x_2 = \{x_1^Lx_2+x_1x_2^L-x_1^Lx_2^L, x_1^Rx_2+x_1x_2^R-x_1^Rx_2^R\mid \\ x_1^Lx_2+x_1x_2^R-x_1^Lx_2^R, x_1^Rx_2+x_1x_2^L-x_1^Rx_2^L\}$.

\end{enumerate}
\end{defn}
The first part of Definition~\ref{defs-def-2} yields the following theorem relating $x, x^L,$ and $x^R$:
\begin{theorem}[Conway, \cite{Con01}] \label{defs-thm-1}
For all $x \in \mathbf{No},$ $x^L < x < x^R$.
\end{theorem}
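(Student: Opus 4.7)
The plan is to prove the theorem by transfinite induction on the birthday (equivalently, the recursive construction) of $x$. The natural strategy of inducting only on the displayed statement $x^L < x < x^R$ will stall, because the comparison rule for $x^L \leq x$ forces one to reason about $(x^L)^L$, and the comparison rule for $x \not\leq x^L$ forces one to invoke $x^L \geq x^L$; both of these are really instances of reflexivity applied to options of $x$. The fix is to bundle the theorem together with the auxiliary claim $y \leq y$ for every surreal $y$, and prove the pair simultaneously by induction on birthday.

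For the base case $x = 0 = \{\,\mid\,\}$, both claims hold vacuously since $L_x$ and $R_x$ are empty. For the inductive step, assume that every surreal of smaller birthday (in particular every $x^L$ and $x^R$) satisfies both reflexivity and the strict inequalities. To show $x^L \leq x$ via Definition~\ref{defs-def-2}(1), I must verify (a) no $x^R \leq x^L$ and (b) no $(x^L)^L \geq x$. Clause (a) is immediate from Definition~\ref{defs-def-1}(1), which forbids any $x^L \geq x^R$. For clause (b), suppose for contradiction that $(x^L)^L \geq x$; unpacking $x \leq (x^L)^L$ by the comparison rule demands that no left option of $x$ is $\geq (x^L)^L$, yet the particular left option $x^L$ satisfies $x^L > (x^L)^L$ by the inductive hypothesis applied to the (smaller-birthday) surreal $x^L$, giving the required contradiction.

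Next, to show $x \not\leq x^L$, assume $x \leq x^L$; the comparison rule then requires that no left option of $x$ is $\geq x^L$, but taking this left option to be $x^L$ itself and invoking reflexivity $x^L \leq x^L$ (the inductive hypothesis of the auxiliary claim at $x^L$) yields $x^L \geq x^L$, a contradiction. Together with $x^L \leq x$, this establishes $x^L < x$. The symmetric argument, exchanging the roles of left and right options and of $\leq$ and $\geq$, gives $x < x^R$. Finally, I close the induction on the auxiliary claim: $x \leq x$ now follows because the just-proved strict inequalities imply no $x^L \geq x$ and no $x^R \leq x$, which is exactly the comparison criterion for $x \leq x$.

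The main obstacle is recognizing that the statement as printed does not induct on its own, and that reflexivity of $\leq$ must be carried along as a parallel inductive invariant. Once the induction is set up this way, every verification reduces mechanically to unfolding Definition~\ref{defs-def-2}(1) once and citing the inductive hypothesis at the appropriate option.
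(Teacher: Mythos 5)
Your proof is correct: the mutual transfinite induction carrying reflexivity $y \leq y$ alongside the strict inequalities, with clause (a) supplied by the defining condition on numbers (no $x^L \geq x^R$), is precisely the standard argument from Conway's book, which the paper cites for this theorem without reproducing a proof. Nothing is missing.
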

Let $\mathbf{On}$ be the class of ordinals, and for all $\alpha \in \mathbf{On}$, let $\mathbf{On}_{<\alpha}$ be the set of ordinals $<\alpha$ and $\mathbf{On}_{>\alpha}$ be the class of ordinals $>\alpha$. Of relevance is the fact that $\mathbf{On} \subsetneq \mathbf{No}$; in particular, if $\alpha \in \mathbf{On}$, $\alpha$ has the representation $\alpha = \{\mathbf{On}_{<\alpha} \mid \}$. Ordinals can be combined to yield ``infinite numbers,'' and the multiplicative inverses of such numbers are ``infinitesimal numbers.'' Representations of the form $\{L\mid R\}$ are known as genetic formulae. The name ``genetic formula'' highlights the fact that numbers can be visualized as having birthdays; i.e.~for every $x \in \mathbf{No}$, there exists $\alpha \in \mathbf{On}$ such that $x$ has birthday $\alpha$ (Theorem 16 of Conway's book~\cite{Con01}). We write $\mathfrak{b}(x) = \alpha$ if the birthday of $x$ is $\alpha$. Because of the birthday system, the form $\{L\mid R\}$ represents a unique number:
\begin{theorem}[Conway, \cite{Con01}] \label{defs-thm-2}
Let $x\in\mathbf{No}$. Then, $x = \{L_x\mid R_x\}$ iff $x$ is the oldest number greater than the elements of $L_x$ and less than the elements of $R_x$.
\end{theorem}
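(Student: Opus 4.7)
The plan is to reduce the theorem to a single simplicity-style claim: for any sets $L,R$ with every element of $L$ strictly less than every element of $R$, the form $\{L\mid R\}$, viewed as a number via Definition~\ref{defs-def-1}, coincides with the oldest number strictly between $L$ and $R$. Both directions of the ``iff'' then fall out at once by applying this claim to $L = L_x$ and $R = R_x$: the forward direction says the representation $\{L_x\mid R_x\}$ must pick out the oldest number between $L_x$ and $R_x$, and the backward direction says any such oldest number must coincide with this form.

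To prove the claim, I would set $z \defeq \{L \mid R\}$ and let $y$ be any number satisfying $L < y < R$ whose birthday $\mathfrak{b}(y)$ is minimal among all such numbers; this $y$ exists since $z$ itself qualifies and $\mathbf{On}$ is well-ordered. The goal is to show $y = z$ by verifying both inequalities of the comparison rule in Definition~\ref{defs-def-2}. For $z \leq y$, I have to check no $z^L \geq y$ and no $y^R \leq z$. The first is immediate because every $z^L$ lies in $L$ and $L < y$ by hypothesis. For the second, argue by contradiction: if some right option satisfied $y^R \leq z$, then Theorem~\ref{defs-thm-1} together with transitivity would yield the chain
\[
L \;<\; y \;<\; y^R \;\leq\; z \;<\; R,
\]
placing $y^R$ strictly between $L$ and $R$; but $y^R$ is an option of $y$ and therefore has strictly smaller birthday than $y$, contradicting the minimality of $\mathfrak{b}(y)$. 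A mirror-image argument rules out $y^L \geq z$ and gives $y \leq z$, whence $y = z$. The theorem follows: $x = \{L_x\mid R_x\}$ iff $x$ equals this form iff $x$ equals the oldest number between $L_x$ and $R_x$, which is exactly the stated right-hand side. Uniqueness of that ``oldest'' number is a free byproduct, since any two candidates would both be proven equal to $z$.

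The main obstacle is recognizing that the innocuous-looking clause ``no $y^R \leq z$'' in the comparison rule is doing all the work: it forces every right option of a candidate $y$ to leave the open interval $(L,R)$, while Theorem~\ref{defs-thm-1} places it strictly above $y$; the only escape is for $y^R$ to exceed some element of $R$, which the assumption $y^R \leq z < R$ forbids. Once this observation is isolated, the rest of the proof is essentially a bookkeeping exercise on birthdays, and no further induction beyond the well-ordering of $\mathbf{On}$ is needed.
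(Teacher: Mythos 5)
The paper offers no proof of this statement---it is quoted directly from Conway's book as background---so there is nothing internal to compare against; your argument must stand on its own, and it essentially does: it is the standard ``simplicity theorem'' proof, and the reduction to the single claim that $\{L\mid R\}$ equals the oldest number strictly between $L$ and $R$ is the right move, as is the observation that uniqueness of the oldest number comes for free.

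One point needs to be made explicit, however. You assert that $y^R$ ``is an option of $y$ and therefore has strictly smaller birthday than $y$.'' That is false for an arbitrary representation of $y$: by Definition~\ref{defs-def-1}(2) a number has many representations, and, for instance, $0=\{-1\mid 1\}$ has options of birthday $1$ while $\mathfrak{b}(0)=0$. The inequality $\mathfrak{b}(y^R)<\mathfrak{b}(y)$ holds only for the representation of $y$ arising from its creation on day $\mathfrak{b}(y)$, whose left and right sets consist of previously created numbers. So you must fix that canonical representation of $y$ before applying the comparison rule of Definition~\ref{defs-def-2}; this is legitimate because comparison does not depend on the choice of representation (a fact from Conway's basic theory that you should cite or note), but as written the step is a genuine gap. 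With that repair the rest goes through: $L<y<y^R\leq z<R$ places $y^R$ strictly between $L$ and $R$ with strictly smaller birthday, contradicting minimality, and the mirror argument (using $L<z$ and $z<R$ from Theorem~\ref{defs-thm-1}) handles $y^L$.
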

Because numbers are defined recursively, mathematical induction can be performed on the birthdays of numbers; i.e.\ we can hypothesize that a statement holds for older numbers and use that hypothesis to show that the statement holds for younger numbers.

We now demonstrate how a form $\{L \mid R\}$ can have a name $x$. To do this, we need two tools: (1) Basic arithmetic properties; and (2) $\mathbb{R} \subsetneq \mathbf{No}$. The first tool is established in Definition~\ref{defs-def-2}. The second tool results from the construction of numbers. We now illustrate the construction process:

\begin{enumerate}\setlength{\itemsep}{0mm}
\item \emph{Day $0$}: $0 = \{\mid\}$ is taken as a ``base case'' for the construction of other numbers.
\item \emph{Day $1$}: $0$ can belong in either the left or right set of a new number, so we get two numbers, named so: $1 = \{0\mid\}$ and $-1 = \{\mid0\}$.
\item \emph{Day $2$}: We can now use the numbers $0, \pm 1$ in the left and right sets of newer numbers still, which we name so: $2=\{1\mid\}$, $1/2=\{0\mid 1\}$, $-2 = \{\mid -1\}$, and $-1/2 = \{-1 \mid 0\}$.
%\item All dyadic rationals are created on finite days.
\item All dyadic rationals are created on finite days.
\item \emph{Day $\omega$}: All other reals are created, so $\mathbb{R} \subsetneq \mathbf{No}$.
%\item Thus $\mathbb{R} \in \mathbf{No}$.

\end{enumerate}

It is easy to show that for any $\{L \mid R\}, \{L' \mid R'\} \in \mathbf{No}$ such that their names, say $a = \{L \mid R\}, b = \{L' \mid R'\}$, satisfy $a,b \in \mathbb{R}$, an arithmetic property governing the reals $a,b$ also holds for the surreals $\{L \mid R\}, \{L' \mid R'\}$. For example, the sum of two surreals equals the sum of their names; i.e.~$\{0 \mid\} = 1$ and $\{1 \mid \} = 2$, so $\{0 \mid\} + \{0 \mid\} = 1 + 1 = 2 = \{1 \mid\}$, where we use part $3$ of Definition~\ref{defs-def-2} to do the surreal addition. Thus, assigning names like $1$ to $\{0 \mid\}$ and $2$ to $\{1 \mid\}$ makes sense.

Theorem 21 from~\cite{Con01} states that every number can be uniquely represented as a formal sum over ordinals $\sum_{i \in \mathbf{On}_{<\beta}} r_i \cdot \omega^{y_i}$, where the coefficients $r_i$ satisfy $r_i \in \mathbb{R}$ and the numbers $y_i$ form a decreasing sequence.\footnote{The method by which these transfinite sums are evaluated is not relevant to the rest of the paper but is discussed thoroughly in~\cite{Con01}.} This representation is called the \emph{normal form} of a number.

\subsection{Gaps} \label{gapsadd}
\noindent Suppose we have already constructed $\mathbf{No}$. Unlike its real analogue, the surreal number line is riddled with gaps, which are defined as follows:
\begin{defn}[Conway,~\cite{Con01}] \label{defs-def-4}
Let $L$ and $R$ be two classes of numbers such that $L \cup R = \mathbf{No}$. If there do not exist $a \in L$ and $b \in R$ such that $a \geq b$, the form $\{L \mid R\}$ represents a gap.
\end{defn}
Gaps are Dedekind sections of $\mathbf{No}$, and in the language of birthdays, all gaps are born on day $\mathbf{On}$. Notice that gaps are distinct from numbers because if $\{L \mid R\}$ is the representation of a gap, then there cannot be any numbers between the elements of $L$ and the elements of $R$. The Dedekind completion of $\mathbf{No}$, which contains all numbers and gaps, is denoted $\mathbf{No}^\mathfrak{D}$. Basic arithmetic operations (except for negation) on $\mathbf{No}^\mathfrak{D}$ are different from those on $\mathbf{No}$, as discussed in~\cite{For04}. Three gaps worth identifying are (1) $\mathbf{On} = \{\mathbf{No}\mid \}$, the gap larger than all surreals (surreal version of infinity); (2) $\mathbf{Off} = -\mathbf{On}$, the gap smaller than all surreals (surreal version of $-$infinity); and (3) $\infty = \{\mathrm{+ finite\,\,and - numbers}\mid \mathrm{+ infinite\,\,numbers}\}$, the object called infinity and denoted $\infty$ in real analysis. The gap $\mathbf{On}$ is important for the purpose of evaluating limits of sequences and functions. Throughout the rest of the paper, we say that \emph{a sequence is of length $\bm{On}$} if its elements are indexed over all elements of the proper class of ordinals $\mathbf{On}$.

Gaps can be represented using normal forms as well~\cite{Con01}. All gaps can be classified into two types, Type I and Type II, which have the following normal forms:
\begin{align*}
\mathrm{Type~I:}~&\sum_{i \in \mathbf{On}} r_i \cdot \omega^{y_i}; \\
\mathrm{Type~II:}~&\sum_{i \in \mathbf{On}_{<\alpha}} r_i \cdot \omega^{y_i} \oplus \left(\pm \omega^{\Theta}\right),
\end{align*}
where in both sums the $r_i$ are nonzero real numbers and $\{y_i\}$ is a decreasing sequence. In the Type II sum, $\alpha \in \mathbf{On}$, $\Theta$ is a gap whose right class contains all of the $y_i$, and the operation $\oplus$ denotes the sum of a number $n$ and gap $g$, defined by $n \oplus g = \{n+g^\mathscr{L} \mid n+g^\mathscr{R}\}$. Also, $\omega^{\Theta} = \{0, a \cdot \omega^l \mid b \cdot \omega^r\}$, where $a,b \in \mathbb{R}_{>0}$ and $l \in \mathscr{L}_{\Theta}, r \in \mathscr{R}_{\Theta}$.

The real number line does not have gaps because it is Dedekind complete. The topology that we present in Subsection~\ref{deffuncs} allows us to obtain results in surreal analysis, like the Intermediate Value Theorem and the Extreme Value Theorem, that are analogous to those in real analysis even though $\mathbf{No}$ is not Dedekind complete.

\subsection{Functions} \label{deffuncs}
\noindent Functions on $\mathbf{No}$ also exist; namely, for $A \subset \mathbf{No}$, a function $f : A \to \mathbf{No}$ is an assignment to each $x \in A$ a unique value $f(x) \in \mathbf{No}$. It is important for the purpose of studying surreal analysis to define what it means for a surreal function to be continuous. To this end, we define a topology on $\mathbf{No}$ as follows. We first define what it means to have a topology on $\mathbf{No}$:

\begin{defn}\label{top}
A topology on $\mathbf{No}$ is a collection $\mathcal{A}$ of subclasses of $\mathbf{No}$ satisfying the following properties:
\begin{enumerate}
\item $\varnothing, \mathbf{No} \in \mathcal{A}$.
\item $\bigcup_{i \in I} A_i \in \mathcal{A}$ for any subcollection $\{A_i\}_{i \in I} \subset \mathcal{A}$ indexed over a proper set $I$.
\item $\bigcap_{i \in I} A_i \in \mathcal{A}$ for any subcollection $\{A_i\}_{i \in I} \subset \mathcal{A}$ indexed over a finite set $I$.
\end{enumerate}
The elements of $\mathcal{A}$ are declared to be ``open.''
\end{defn}

\begin{remark}
In Definition~\ref{top}, it is important to note that not all unions of open subclasses of $\mathbf{No}$ are necessarily open; only those that are indexed over a \emph{proper set} need to be open. This stipulation is crucial for two purposes: (1) to make $\mathbf{No}$ connected; and (2) to make the compactness arguments in Subsection~\ref{integralFTC} work.

The set-theoretic details of our constructions are not of particular importance to the remainder of this article. (As Conway observes in~\cite[p.\ 66]{Con01}, we ought to be free to construct new objects from previously constructed ones without fear.) However, we point out that there are many ways of justifying our constructions from a set-theoretic point of view. As Ehrlich has shown in \cite{Ehr88}, it is possible to construct a model of the collection $\mathbf{No}$ of surreal numbers in the von Neumann-Bernays-G\"odel (NBG) set theory. This model suffices for our purposes, although a bit of care must be taken in Definition~\ref{top}. We are not free to put the open classes of a topology into a class; instead, we label the open classes, and we check that the labelled classes satisfy the axioms of a topology: i.e.\ the empty class and the class of all surreals are open, the union indexed over a set of open classes is open, and the intersection of two (or finitely many) open classes is open. Similarly, when we refer to a ``covering'' of a subclass of $\mathbf{No}$, we mean a collection of labels assigned to classes. This does not pose any problems, because we do not do anything of set-theoretic interest with the elements of our topology $\mathcal{A}$.
\end{remark}

We next define what we want an open subclass of $\mathbf{No}$ to be:

\begin{defn}\label{defopen}
The empty set is open. A nonempty subinterval of $\mathbf{No}$ is open if it (1) has endpoints in $\mathbf{No} \cup \{\mathbf{On}, \mathbf{Off}\}$; and (2) does not contain its endpoints.\footnote{We use the notation $(a,b)$ to denote an interval not containing its endpoints and the notation $[a,b]$ to denote an interval containing its endpoints.} A subclass $A \subset \mathbf{No}$ is open if it has the form $A = \bigcup_{i\in I} A_i$, where $I$ is a proper set and the $A_i$ are open intervals.
\end{defn}

\begin{remark}
The open classes are the surreal analogues of the open sets discussed in real analysis. Also, observe that our notion of openness is not equivalent to local openness; i.e.~it is not equivalent to the following statement: ``A space $S$ is open if every point in $S$ has a neighborhood contained in $S$.'' For example, the interval $(\infty, \mathbf{On})$ does satisfy the requirement that every point in the interval has a neighborhood in it, but according to our definition, it is not open. However, our notion of openness indeed does \emph{imply} local openness.
\end{remark}

\begin{example}\label{openexamp}
Now that we have specified the open subclasses of $\mathbf{No}$, we provide two examples of a union of open subclasses of $\mathbf{No}$: one that is open, and one that is not. Notice that the interval $(\mathbf{Off}, \infty)$ is open, even though it has $\infty$ as an endpoint, because it can be expressed as a union of open intervals over the (proper) set of integers: $(\mathbf{Off}, \infty)=\bigcup_{i\in \mathbb{Z}} (\mathbf{Off},i)$. On the other hand, we claim that the interval $(\infty, \mathbf{On})$ is not open because it cannot be expressed as a union of open intervals over a proper set. Indeed, suppose there exists a collection $\{A_i\}_{i \in I}$ of open subintervals of $\mathbf{No}$ with endpoints in $\mathbf{No} \cup \{\mathbf{On}\}$ indexed over a proper set $I$ such that $(\infty, \mathbf{On}) = \bigcup_{i \in I} A_i$. Then, consider $x = \{0,1,2,\dots \mid \{\inf A_i\}_{i \in I}\}$. By construction, $x \in \mathbf{No}$, $x > \infty$, but $x \not\in \bigcup_{i \in I} A_i$, contradicting $(\infty, \mathbf{On}) = \bigcup_{i \in I} A_i$.
\end{example}

\begin{proposition}\label{topproof}
Definition~\ref{defopen} defines a topology on $\mathbf{No}$.
\end{proposition}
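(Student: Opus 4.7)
The plan is to verify the three axioms of Definition~\ref{top} in turn. For axiom (1), the empty set is declared open by fiat. For $\mathbf{No}$ itself, observe that $\mathbf{No} = (\mathbf{Off},\mathbf{On})$: both endpoints lie in $\mathbf{No}\cup\{\mathbf{On},\mathbf{Off}\}$ and neither is contained in $\mathbf{No}$, so this is an open interval and hence belongs to $\mathcal{A}$.

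For axiom (2), let $\{A_i\}_{i\in I}$ be a collection of open classes indexed over a proper set $I$. By Definition~\ref{defopen}, each $A_i = \bigcup_{j\in J_i} A_{i,j}$ for some proper set $J_i$ and open intervals $A_{i,j}$. Then
\[
\bigcup_{i\in I} A_i \;=\; \bigcup_{(i,j)\in K} A_{i,j}, \qquad K = \{(i,j) : i\in I,\ j\in J_i\},
\]
and $K$ is contained in $I\times \bigcup_{i\in I} J_i$. Since $I$ is a proper set and $\{J_i\}_{i \in I}$ is a set-indexed family of proper sets, NBG set theory guarantees that $K$ is itself a proper set, so $\bigcup_{i \in I} A_i$ has the form required by Definition~\ref{defopen}.

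For axiom (3), it suffices by induction on $|I|$ to show that the intersection of any two open classes is open. The essential computation is that if $(a,b)$ and $(c,d)$ are open intervals with endpoints in $\mathbf{No}\cup\{\mathbf{On},\mathbf{Off}\}$, then, using the fact that $\mathbf{No}\cup\{\mathbf{On},\mathbf{Off}\}$ is totally ordered,
\[
(a,b)\cap(c,d) \;=\; \bigl(\max\{a,c\},\ \min\{b,d\}\bigr),
\]
which is either empty or again an open interval of the required form. For two general open classes $A = \bigcup_{i\in I} A_i$ and $B = \bigcup_{j\in J} B_j$ with $I,J$ proper sets and $A_i, B_j$ open intervals, distributivity gives
\[
A\cap B \;=\; \bigcup_{(i,j)\in I\times J} \bigl(A_i\cap B_j\bigr),
\]
a proper-set-indexed union of open intervals (after discarding empty terms), hence open.

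The main obstacle is not analytic but set-theoretic bookkeeping: at each step I must verify that the index collections remain proper sets, in the spirit of the remark following Definition~\ref{top}, where the axioms are checked on labels rather than on the classes themselves. Once the pointwise intersection formula for two basic open intervals is in hand, the three axioms follow with no further surprises.
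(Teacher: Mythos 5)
Your proof is correct and follows essentially the same route as the paper: check $\varnothing$ and $\mathbf{No}=(\mathbf{Off},\mathbf{On})$ directly, note that a proper-set-indexed union of proper-set-indexed unions is again proper-set-indexed, and reduce finite intersections to the distributive identity $\bigcup_i A_i\cap\bigcup_j B_j=\bigcup_{(i,j)}(A_i\cap B_j)$ over the proper set $I\times J$. Your only additions are harmless extra bookkeeping: the explicit formula $(a,b)\cap(c,d)=(\max\{a,c\},\min\{b,d\})$ and the discarding of empty terms, both of which the paper leaves implicit.
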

\begin{proof}
Consider the collection $\mathcal{A}$ of open subclasses of $\mathbf{No}$. By definition, $\varnothing \in \mathcal{A}$ and since $\mathbf{No} = (\mathbf{Off}, \mathbf{On})$ has endpoints in $\mathbf{No} \cup \{\mathbf{On}, \mathbf{Off}\}$, it is also open. Because the union over a proper set of unions over proper sets is itself a union over a proper set, any union over a proper set of elements of $\mathcal{A}$ is itself an element of $\mathcal{A}$. By induction, it then suffices to show that if $\{A_\alpha\}_{\alpha \in A}$ and $\{B_\beta\}_{\beta \in B}$ are collections of open intervals indexed by proper sets $A,B$, then $C = \bigcup_{\alpha \in A} A_\alpha \cap \bigcup_{\beta \in B} B_\beta \in \mathcal{A}$. But we have that $$\bigcup_{\alpha \in A} A_\alpha \cap \bigcup_{\beta \in B} B_\beta = \bigcup_{(\alpha, \beta) \in A \times B} A_\alpha \cap B_\beta,$$
and the expression on the right-hand-side is a union over a proper set of open intervals, so indeed $C \in \mathcal{A}$.
\end{proof}

We take the subspace topology to be as usual: if $X \subset \mathbf{No}$, then $X' \subset X$ is ``open in $X$'' if there exists open $X'' \subset \mathbf{No}$ such that $X' = X \cap X''$. We can now define what it means for a function to be \emph{continuous}, by which we mean continuous with respect to the topology of Definition~\ref{defopen}:

\begin{defn} \label{defcont}
Let $A \subset \mathbf{No}$, and let $f : A \to \mathbf{No}$ be a function. Then $f$ is \emph{continuous} on $A$ if for any class $B$ open in $\mathbf{No}$, $f^{-1}(B)$ is open in $A$.
\end{defn}

Because the surreal numbers themselves were constructed inductively with genetic formulas $\{L \mid R\}$, it is only natural to wonder whether surreal functions can be constructed in a similar way. The rest of this subsection describes the class of \emph{genetic} functions; i.e. functions that have inductive definitions.

We begin by presenting one method of constructing genetic functions defined on all of $\mathbf{No}$. Let $S$ be a class of genetic functions defined on all of $\mathbf{No}$. We will define a function $f : \mathbf{No} \to \mathbf{No}$ inductively as follows. The construction process can proceed in one of two ways:

\begin{itemize}
\item Consider the Ring (capitalized because $\mathbf{No}$ is a proper class) $K$ obtained by adjoining to $\mathbf{No}$ the following collection of symbols: $\{g(a),g(b) : g \in S \cup \{f\}\}$, where $a,b$ are indeterminates (notice that the symbols $f(a), f(b)$ are allowed, but no other symbols involving $f$ are allowed). We then obtain a class of symbols $S(a,b) = \{c_1h(c_2x+c_3)+c_4 : c_1,c_2,c_3,c_4 \in K, h \in S\}$ (now notice that $h \in S$, so we cannot take $h$ to be $f$ in this part of the construction). Next, consider the Ring $R(a,b)$ generated over $\mathbf{No}$ by adjoining the elements of $S(a,b)$, and let $L_f,R_f \subset R(a,b)$ be proper subsets. Fix $x \in \mathbf{No}$, and suppose that $f(y)$ has already been defined for all $y \in L_x \cup R_x$. Also let $x^L \in L_x, x^R \in R_x$. Then replace $a$ with $x^L$ and $b$ with $x^R$ in $R(a,b)$ and consider the resulting sets of functions $L_f(x^L, x^R), R_f(x^L, x^R)$ from $\mathbf{No} \to \mathbf{No}$ (these sets are obtained from $L_f, R_f$ by substitution). Now, if for all $x^L,{x^L}' \in L_x$, $x^R,{x^R}' \in R_x$, $f^L \in L_f(x^L, x^R)$, $f^R \in R_f({x^L}',{x^R}')$ we have $f^L(x) < f^R(x)$, we let $f(x)$ be given by the expression
$$\left\{\bigcup_{x^L \in L_x, x^R \in R_x} \{f^L(x) : f^L \in L_f(x^L, x^R)\} \middle\rvert \bigcup_{x^L \in L_x, x^R \in R_x} \{f^R(x) : f^R \in R_f(x^L, x^R)\} \right\}.$$
In this case, $f$ is genetic. The elements of $L_f$ are called left options of $f$ and denoted $f^L$, and the elements of $R_f$ are called right options of $f$ and denoted $f^R$.
\item Let $g,h \in S$. Define $f = g \circ h$ by $f(x) = g(h(x))$. If for all $x \in \mathbf{No}$ we have $g(x) = \{L_g(x^L, x^R) \mid R_g(x^L, x^R)\}$ and $h(x) = \{L_h(x^L, x^R) \mid R_h(x^L, x^R)\}$, we have that $f(x)$ is given by the expression
\begin{align*}
& \left\{\bigcup_{x^L \in L_x, x^R \in R_x}\,\,\,\bigcup_{h^L \in L_h(x^L, x^R), h^R \in R_h(x^L, x^R)} \{g^L(h(x)) : g^L \in L_g(h^L(x), h^R(x))\} \middle\rvert \right.\\ &\left. \bigcup_{x^L \in L_x, x^R \in R_x}\,\,\,\bigcup_{h^L \in L_h(x^L, x^R), h^R \in R_h(x^L, x^R)} \{g^R(h(x)) : g^R \in L_g(h^L(x), h^R(x))\}\right\}.
\end{align*}
\end{itemize}

\begin{example}\label{funk1}
All polynomial functions are genetic and can be constructed using the method of the previous paragraph. For instance, let us illustrate the construction of $f(x) = x^2$. Let $S= \{\mathrm{id}\}$ where $\mathrm{id} : \mathbf{No} \to \mathbf{No}$ is the identity function. Then the Ring obtained by adjoining the collection of symbols $\{g(a), g(b) : g \in S \cup \{f\}\}$ to $\mathbf{No}$ contains the symbols $2a,2b,-a^2,-b^2,a+b$ and $-ab$. Therefore, the class $S(a,b)$ (and consequently the ring $R(a,b)$) contains the symbols $2xa - a^2, 2xb - b^2$, and $xa + xb - ab$. Let $L_f = \{2xa-a^2,2xb-b^2\}$ and $R_f = \{xa + xb - ab\}$. Fix $x \in A$, and suppose that $f$ has been defined on all $y \in L_x \cup R_x$. We then have that $L_f(x^L, x^R) = \{2xx^L - {x^L}^2, 2xx^R-{x^R}^2\}$ and $R_f(x^L, x^R) = \{xx^L+xx^R-x^Lx^R\}$. One can use the following inequalities to verify that for all $x^L,{x^L}' \in L_x$, $x^R,{x^R}' \in R_x$, $f^L \in L_f(x_L, x_R)$, $f^R \in R_f({x^L}',{x^R}')$ we have $f^L(x) < f^R(x)$:
$$(x-x^L)^2 > 0, (x - x^R)^2 > 0, \text{ and } (x - x^L)(x^R - x) > 0.$$
We then say that the function $f(x) = x^2$ is represented as $f(x) = \{2xx^L-{x^L}^2, 2xx^R-{x^R}^2\mid xx^L+xx^R-x^Lx^R\}$. In this case, the ``left options'' can be either $2xa-a^2$ or $2xb-b^2$ and the ``right options'' can be $xa+xb-ab$.

In general, if $f,g$ are functions constructed in the above way, then $f+g, fg, f \circ g$ are also constructible by the above process.
\end{example}

The reason that we only consider functions defined on all of $\mathbf{No}$ in the above construction process is that otherwise, we may end up evaluating functions at numbers outside of their domains. For example, if a function $f$ is only defined on the interval $[1/2,3/2]$, then it is not clear what $f(x^L)$ is when $x = 1 = \{0 \mid \}$. However, it is still possible to find genetic definitions of functions whose domains are proper subsets of $\mathbf{No}$, although we cannot necessarily write out a genetic formula for such functions. For example, a genetic definition of the function $f(x) = 1/x$ is given in~\cite{Con01}, but this definition cannot be stated purely in terms of symbols; it gives a genetic formula which depends on the stipulation that we must ``ignore options'' when division by 0 occurs. In Section~\ref{newfunc}, we provide examples of two functions whose genetic definitions require options to be ``ignored'' when they satisfy certain conditions.

Genetic functions need not be continuous, even if they are constructed using continuous functions. Since we have an inductive definition for $f(x) = 1/x$, all rational functions are genetic, so in particular, $\frac{x}{1+x^2}$ is genetic. Then the unit step function
$$f(x) = \begin{cases} 0 & x < 0 \\ 1 & x \geq 0 \end{cases}$$
is genetic, with the following simple genetic definition:
$$\left\{\frac{x}{1+x^2} \middle\rvert \right\}.$$

\section{Two New Surreal Functions}\label{newfunc}

\noindent In this section, we propose new genetic definitions of two functions, namely $\arctan(x)$ and $\mathrm{nlog}(x)=-\log(1-x)$. We show that our definitions match their real analogues on their domains, and we also provide an example of how to evaluate our functions at non-real values.

Maclaurin expansions are useful for defining surreal functions, as discussed in Gonshor's book~\cite{Gon86}. Given $n \in \mathbb{N}$, $x \in \mathbf{No}$, and a real analytic function $f(x)$, let $[x]_n$ denote the $n$-truncation of the Maclaurin expansion of $f(x)$; i.e.~$[x]_n = \sum_{i=0}^n \frac{f^{(i)}(0) x^i}{i!}$. To avoid confusion with the corresponding functions on $\BR$, we will denote by $\ul{\arctan}(x), \ul{\nlog}(x)$ our definitions of $\arctan,\nlog$ respectively.

\begin{defn} \label{funky}
On all of $\mathbf{No}$ we define the following function:
{\footnotesize
\begin{align*}
\ul{\arctan}(x)  = &\left\{\frac{-\pi}{2}, \ul{\arctan}(x^L) +\left[\frac{x-x^L}{1+xx^L}\right]_{4n-1}, \ul{\arctan}(x^R)+\left[\frac{x-x^R}{1+xx^R}\right]_{4n+1} \middle\rvert \right. \\ & \left. \ul{\arctan}(x^R)-\left[\frac{x^R-x}{1+xx^R}\right]_{4n-1}, \ul{\arctan}(x^L)-\left[\frac{x^L-x}{1+xx^L}\right]_{4n+1}, \frac{\pi}{2} \right\}
\end{align*}} under three conditions: (1) if $y \in L_x \cup R_x$ is such that $$\left|\frac{x-y}{1+xy}\right| > 1,$$ we ignore the options in the formula that involve $y$; (2) if $x^L$ is such that
$$\left|\ul{\arctan}(x^L) \pm \left[\frac{\pm(x-x^L)}{1+xx^L}\right]_{4n \mp 1}\right| > \frac{\pi}{2},$$ then we ignore the options in the formula that involve $x^L$; and (3) if $x^R$ is such that $$\left|\ul{\arctan}(x^R) \pm \left[\frac{\pm(x-x^R)}{1+xx^R}\right]_{4n \pm 1}\right| > \frac{\pi}{2},$$ then we ignore the options in the formula that involve $x^R$. On the domain $(-\infty, 0]$ we define the following function
{\footnotesize
\begin{align*}
\ul{\mathrm{nlog}}(x)  = & \left\{\ul{\nlog}(x^L)+\left[\frac{x-x^L}{1-x^L}\right]_{n}, \ul{\nlog}(x^R)+\left[\frac{x-x^R}{1-x^R}\right]_{2n+1}  \middle\rvert \right. \\ & \left. \ul{\nlog}(x^R)-\left[\frac{x^R-x}{1-x}\right]_{n}, \ul{\nlog}(x^L)-\left[\frac{x^L-x}{1-x}\right]_{2n+1} \right\}
\end{align*}} where if $y \in L_x \cup R_x$ is such that $$\left|\frac{x-y}{1-y}\right| \geq 1 \text{ or } \left|\frac{x-y}{1-x}\right| \geq 1,$$ we ignore the options in the formula that involve $y$.
\end{defn}

\begin{remark}
As discussed in Subsection~\ref{deffuncs}, the above two definitions require ``verbal'' conditions in addition to formulas in order to be stated completely. For example, let us consider $\ul{\arctan}$. The verbal conditions for this function tell us to ignore options in the genetic formula when they satisfy at least one of three inequalities. It is then natural to wonder whether for some $x$ these conditions eliminate all possible options in $L_x$ or in $R_x$ and thereby cause the definition to give a ``wrong answer'': indeed, if $x = 1/2$ and all right options are eliminated, then $\ul{\arctan}(1/2)$ would be equal to $\ul{\arctan}(1),$ which is an undesirable result. As it happens, this problem does not occur in either the case of $\ul{\arctan}$ or $\ul{\nlog}$; i.e. if we write $x = \{L_x \mid R_x\}$ and the sets $\wt{L}_x, \wt{R}_x$ are the result of eliminating all options that satisfy the verbal conditions in the definition of either $\ul{\arctan}$ or $\ul{\nlog}$, then $x = \{\wt{L}_x \mid \wt{R}_x\}$.

Also, the above genetic formula of $\ul{\nlog}$ fails to work outside of the domain $(-\infty, 0]$. Indeed, if $x = \{L_x \mid R_x\} \in \mathbf{No}$ lies outside of this domain, then the sets $\wt{L}_x, \wt{R}_x$ that result from eliminating all options that satisfy the verbal condition in the definition of $\ul{\nlog}$ may not be such that $x = \{\wt{L}_x \mid \wt{R}_x\}$.
\end{remark}

To check that the two definitions are reasonable, we must verify that the real functions $\arctan$ and $\mathrm{nlog}$ agree with $\ul{\arctan}$ and $\ul{\nlog}$ when the argument is real. To this end, we state and prove the following theorem:

\begin{theorem} \label{funkies}
For all $x \in \BR$, we have $\arctan(x) = \ul{\arctan}(x)$, and for all $x \in \BR_{\leq 0}$, we have $\mathrm{nlog}(x) = \ul{\nlog}(x)$.
\end{theorem}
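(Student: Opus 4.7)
The plan is to prove both equalities by strong induction on the birthday $\mathfrak{b}(x)$. The base case $x = 0$ is immediate: since $0 = \{\mid\}$ has no options, the genetic formulas reduce to $\ul{\arctan}(0) = \{-\pi/2 \mid \pi/2\} = 0$ and $\ul{\nlog}(0) = \{\mid\} = 0$, matching $\arctan(0)$ and $\nlog(0)$. For the inductive step, I would represent $x$ in a form $\{L_x \mid R_x\}$ whose options are themselves real (the canonical form of a real has this property, since every real has birthday at most $\omega$); the induction hypothesis then gives $\ul{\arctan}(y) = \arctan(y)$ and $\ul{\nlog}(y) = \nlog(y)$ for every $y \in L_x \cup R_x$.

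The core analytic input is the subtraction identity $\arctan(x) - \arctan(y) = \arctan\!\left(\tfrac{x-y}{1+xy}\right)$, valid for real arguments whenever the right-hand side lies in $(-\pi/2, \pi/2)$; the verbal conditions in Definition~\ref{funky} are precisely what confines us to this regime. Combined with the alternating-series bounds for the Maclaurin expansion of $\arctan$ on $[-1,1]$, namely $[u]_{4n-1} < \arctan(u) < [u]_{4n+1}$ for $u > 0$ with the reverse inequalities for $u < 0$, one checks case by case that for real $x$ every entry on the left side of the gap in Definition~\ref{funky} is strictly less than $\arctan(x)$, while every entry on the right side is strictly greater than $\arctan(x)$; the explicit $\pm \pi/2$ options guarantee that neither class is ever empty. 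The identity $\nlog(x) - \nlog(y) = \nlog\!\left(\tfrac{x-y}{1-y}\right)$ together with the sign pattern of $-\log(1-u) = \sum_{k \ge 1} u^k/k$ (all terms of one sign for $u > 0$, alternating for $u < 0$) plays the analogous role for $\ul{\nlog}$, with the distinct truncation indices $n$ versus $2n+1$ chosen exactly so that the truncations bound $\nlog(u)$ on the correct side in each case.

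Having sandwiched the real value between the options, I would invoke Theorem~\ref{defs-thm-2}: $\ul{\arctan}(x)$ equals the simplest surreal lying strictly between all left and all right options. To conclude this equals $\arctan(x)$, I would show that as the truncation index $n$ grows and as $x^L, x^R$ range over real approximations of $x$, the left options converge upward to $\arctan(x)$ and the right options converge downward to it in the real topology. Consequently every surreal strictly older than $\arctan(x)$---which, since every real has birthday at most $\omega$, means every dyadic rational distinct from $\arctan(x)$---is eventually excluded from the gap by some option, so $\arctan(x)$ is the simplest surreal in the gap. The same density-plus-simplicity argument handles $\ul{\nlog}$.

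I expect the main obstacle to be this simplicity step. One must verify carefully that the verbal conditions never discard so many options that the density argument fails---the remark following Definition~\ref{funky} asserts $x = \{\wt L_x \mid \wt R_x\}$ after pruning, but this alone does not guarantee that the induced option families for $\ul{\arctan}(x)$ remain dense around $\arctan(x)$---and that the two one-parameter families of options together witness real convergence from both sides at every dyadic candidate. Extra care is needed near boundary cases such as $|x|$ close to $1$ for $\ul{\arctan}$ or $x$ close to $-1$ for $\ul{\nlog}$, where options with $|u|$ at the radius of convergence get pruned and approximations become scarce.
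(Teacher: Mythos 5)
Your proposal is correct and follows essentially the same route as the paper's proof: induction anchored at $x=0$, the subtraction identities $\arctan(x)-\arctan(y)=\arctan\bigl(\tfrac{x-y}{1+xy}\bigr)$ and $\nlog(x)-\nlog(y)=\nlog\bigl(\tfrac{x-y}{1-y}\bigr)$ combined with the one-sided Maclaurin truncation bounds (whose validity is exactly what the verbal conditions enforce) to sandwich the real value between the options, and then the observation that the options converge to the real value so that the resulting cut is $\{\arctan(x)-\tfrac1n \mid \arctan(x)+\tfrac1n\}=\arctan(x)$. The density concern you flag at the end is resolved in the paper simply by fixing one admissible $x^L$ and one admissible $x^R$ (which always exist) and letting the truncation index $n$ run, which already produces left and right options converging to the target value.
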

\begin{proof}
Notice that $\ul{\arctan}(0) = \arctan(0) = 0$ and $\ul{\mathrm{nlog}}(0) = \mathrm{nlog}(0) = 0$. We may now proceed by induction upon the options of $x$, since we have base cases for both functions. Thus, assume that $\ul{f}(x^L)= f(x^L)$ and $\ul{f}(x^R) = f(x^R)$, where $f$ is either $\arctan$ or $\nlog$.

Let us begin by considering $\arctan$. We will first show that $(\ul{\arctan}(x))^L < \arctan(x) < (\ul{\arctan}(x))^R$. It suffices to have the following four inequalities:
\begin{align*}
\left[\frac{x-x^L}{1+xx^L}\right]_{4n-1} & < \arctan(x) - \arctan(x^L)=\arctan\left(\frac{x-x^L}{1+xx^L}\right)   , \\ \left[\frac{x-x^R}{1+xx^R}\right]_{4n+1} & < \arctan(x)-\arctan(x^R)=\arctan\left(\frac{x-x^R}{1+xx^R}\right)   , \\ \left[\frac{x^R-x}{1+xx^R}\right]_{4n-1} & < \arctan(x^R) - \arctan(x)=\arctan\left(\frac{x^R-x}{1+xx^R}\right)  , \\ \left[\frac{x^L-x}{1+xx^L}\right]_{4n+1} & < \arctan(x^L) - \arctan(x)=\arctan\left(\frac{x^L-x}{1+xx^L}\right)  .
 \end{align*}
 From the Maclaurin series expansion of $\arctan(x)$, we know that $[z]_{4n-1} < \arctan(z)$ when $0 < z \leq 1$ and $[z]_{4n+1} < \arctan(z)$ when $-1 \leq z < 0$. So, it suffices to check the following inequalities:
  $$\left|\frac{x-x^L}{1+xx^L}\right| \leq 1 \text{ and } \left|\frac{x-x^R}{1+xx^R}\right| \leq 1.$$
  The above inequalities are precisely given by the extra conditions imposed on the options of $x$ in Definition~\ref{funky}. We next show that  $(\ul{\arctan}(x))^L$ and $(\ul{\arctan}(x))^R$ both ``approach'' $\arctan(x)$.\footnote{By ``approach,'' we mean ``approach as real sequences'' so that $\{(\ul{\arctan}(x))^L \mid (\ul{\arctan}(x))^R\}=\arctan(x)$.} If $L_x \neq \varnothing$, pick $x^L \in \BR$ such that $1 + xx^L > 0$, and if $R_x \neq \varnothing$, pick $x^R \in \BR$ such that $1 + xx^R > 0$ (observe that we can always make such choices). Since $\lim_{n \rightarrow \infty}  [z]_{4n-1} = \arctan(z)$ and $\lim_{n \rightarrow \infty} -[-z]_{4n+1} = \arctan(z)$ when $z \in \BR$ such that $0 < z \leq 1$, we have the following limits:
  \begin{eqnarray*}
  \lim_{n \rightarrow \infty} \ul{\arctan}(x^L) +\left[\frac{x-x^L}{1+xx^L}\right]_{4n-1} & = & \arctan(x^L) + \arctan\left(\frac{x-x^L}{1+xx^L}\right) \\
   & = & \arctan(x); \\
   \lim_{n \rightarrow \infty} \ul{\arctan}(x^R)+\left[\frac{x-x^R}{1+xx^R}\right]_{4n+1} & = & \arctan(x^R) + \arctan\left(\frac{x-x^R}{1+xx^R}\right) \\
   & = & \arctan(x).
  \end{eqnarray*}
  Similarly, since $\lim_{n \rightarrow \infty}  [z]_{4n+1} = \arctan(z)$ and $\lim_{n \rightarrow \infty} -[-z]_{4n-1} = \arctan(z)$ when $z \in \BR$ such that $-1 \leq z < 0$, we have the following limits:
   \begin{eqnarray*}
  \lim_{n \rightarrow \infty} \ul{\arctan}(x^L) -\left[\frac{x^L-x}{1+xx^L}\right]_{4n+1} & = & \arctan(x^L) - \arctan\left(\frac{x^L-x}{1+xx^L}\right)  \\
   & = & \arctan(x); \\
   \lim_{n \rightarrow \infty} \ul{\arctan}(x^R)-\left[\frac{x^R-x}{1+xx^R}\right]_{4n-1} & = & \arctan(x^R) - \arctan\left(\frac{x^R-x}{1+xx^R}\right) \\
   & = & \arctan(x).
  \end{eqnarray*}
   It follows that $\ul{\arctan}(x) = \{\{\arctan(x)-\frac{1}{n} : n \in \BZ_{>0}\} \mid \{\arctan(x) + \frac{1}{n} : n \in \BZ_{> 0}\}\} = \arctan(x)$.

Let us now consider $\nlog$. We will first show that $(\ul{\nlog}(x))^L < \nlog(x) < (\ul{\nlog}(x))^R$. It suffices to have the following four inequalities:
\begin{align*}
\left[\frac{x-x^L}{1-x^L}\right]_{n} & <  \mathrm{nlog}(x) - \mathrm{nlog}(x^L) = \mathrm{nlog}\left(\frac{x-x^L}{1-x^L}\right)\\ \left[\frac{x-x^R}{1-x^R}\right]_{2n+1} & <  \mathrm{nlog}(x)-\mathrm{nlog}(x^R) = \mathrm{nlog}\left(\frac{x-x^R}{1-x^R}\right)  \\ \left[\frac{x^R-x}{1-x}\right]_{n} & <   \mathrm{nlog}(x^R)-\mathrm{nlog}(x) = \mathrm{nlog}\left(\frac{x^R-x}{1-x}\right) \\ \left[\frac{x^L-x}{1-x}\right]_{2n+1} & <  \mathrm{nlog}(x^L)-\mathrm{nlog}(x) = \mathrm{nlog}\left(\frac{x^L-x}{1-x}\right).
 \end{align*}
 From the Maclaurin series expansion of $\nlog(x)$, we know that $[z]_{n} < \mathrm{nlog}(z)$ when $0 < z < 1$ and $[z]_{2n+1} < \mathrm{nlog}(z)$ when $-1 < z < 0$. So, it suffices to check the following inequalities:
  $$\left|\frac{x-x^L}{1-x^L}\right| < 1, \left|\frac{x-x^L}{1-x}\right| < 1, \left|\frac{x-x^R}{1-x^R}\right| < 1, \text{ and } \left|\frac{x-x^R}{1-x}\right| < 1.$$
The above four inequalities are precisely given by the extra conditions imposed on the options of $x$ in Definition~\ref{funky}. We next show that  $(\ul{\nlog}(x))^L$ and $(\ul{\nlog}(x))^R$ both ``approach'' $
\nlog(x)$. Since $\lim_{n \rightarrow \infty}  [z]_{n} = \nlog(z)$ and $\lim_{n \rightarrow \infty} [-z]_{2n+1} = \nlog(-z)$ when $z \in \BR$ such that $0 < z < 1$, we have the following limits:
\begin{eqnarray*}
  \lim_{n \rightarrow \infty} \ul{\nlog}(x^L) +\left[\frac{x-x^L}{1-x^L}\right]_{n} & = & \nlog(x^L) + \nlog\left(\frac{x-x^L}{1-x^L}\right)  \\
   & = & \nlog(x) ; \\
   \lim_{n \rightarrow \infty} \ul{\nlog}(x^R)+\left[\frac{x-x^R}{1-x^R}\right]_{2n+1} & = & \nlog(x^R) + \nlog\left(\frac{x-x^R}{1-x^R}\right)  \\
   & = & \nlog(x) ; \\
  \lim_{n \rightarrow \infty} \ul{\nlog}(x^L) -\left[\frac{x^L-x}{1-x}\right]_{2n+1} & = & \nlog(x^L) - \nlog\left(\frac{x^L-x}{1-x}\right)  \\
   & = & \nlog(x); \\
   \lim_{n \rightarrow \infty} \ul{\nlog}(x^R)-\left[\frac{x^R-x}{1-x}\right]_{n} & = & \nlog(x^R) - \nlog\left(\frac{x^R-x}{1-x}\right) \\
   & = & \nlog(x).
  \end{eqnarray*}
   It follows that $\ul{\nlog}(x) = \{\{\nlog(x)-\frac{1}{n} : n \in \BZ_{>0}\} \mid \{\nlog(x) + \frac{1}{n} : n \in \BZ_{>0}\}\} = \nlog(x)$.
\end{proof}

We now provide an example of how to use Definition~\ref{funky} in a computation:

\begin{example}\label{compexamp}
Consider $\omega = \{\BZ_{>0} \mid\}$, and let us evaluate $\ul{\arctan}(\omega)$. The genetic formula for $\ul{\arctan}$ gives the following:
$$\ul{\arctan}(\omega) = \left\{\frac{-\pi}{2}, \arctan(k) +\left[\frac{\omega-k}{1+k\omega}\right]_{4n-1}  \middle\rvert  \arctan(k)-\left[\frac{k-\omega}{1+k\omega}\right]_{4n+1}, \frac{\pi}{2} \right\},$$
where $k$ runs through the elements of $\BZ_{> 0}$ and where we have used Theorem~\ref{funkies} to say that $\ul{\arctan}(k) = \arctan(k)$ for $k \in \BZ_{> 0}$. Observe that we have the following equality:
$$\frac{\omega-k}{1+k\omega} = \frac{1}{k} - \frac{k^2 + 1}{k(k\omega + 1)}.$$
Since the $(4n-1)$-truncations of the Maclaurin series of $\arctan$ are increasing on the interval $(0,1)$, we have that
$$\left[\frac{1}{k}\right]_{4n-1} \geq \left[\frac{1}{k} - \frac{k^2 + 1}{k(k\omega + 1)}\right]_{4n-1}.$$
But we also have that $[x]_{4n-1} < \arctan(x)$ when $x \in (0,1)$, so we obtain the following inequality:
$$\left[\frac{1}{k}\right]_{4n-1} < \arctan\left(\frac{1}{k}\right) - \frac{1}{\infty}.$$
Combining our results, we have the following inequality:
$$\arctan(k) + \left[\frac{\omega - k}{1 + k\omega}\right]_{4n-1} < \arctan(k) + \arctan\left(\frac{1}{k}\right) - \frac{1}{\infty} = \frac{\pi}{2} - \frac{1}{\infty}.$$
But since $[x]_{4n-1} > 0$ for $x \in (0,1)$, we can make $$\frac{\pi}{2} - \left(\arctan(k) + \left[\frac{\omega - k}{1 + k\omega}\right]_{4n-1}\right) < \varepsilon$$
for each $n$ and for every real $\varepsilon > 0$ by taking $k$ sufficiently large. It follows that the left collection of $\ul{\arctan}(\omega)$ may be written as $\left\{\frac{\pi}{2} - \frac{1}{n} : n \in \BZ_{> 0}\right\}$.
A similar analysis of the right collection of $\ul{\arctan}(\omega)$ yields that all of the options of the form
$$\arctan(k) - \left[\frac{k - \omega}{1 + k\omega}\right]_{4n+1}$$
are greater than $\frac{\pi}{2}$, so the right collection of $\ul{\arctan}(\omega)$ may be written as $\left\{\frac{\pi}{2}\right\}$. It follows that $$\ul{\arctan}(\omega) = \left\{\frac{\pi}{2} - \frac{1}{n} \middle\rvert \frac{\pi}{2} \right\} = \frac{\pi}{2} - \frac{1}{\omega}.$$
\end{example}

As is detailed in~\cite{Van01} and~\cite{Van94}, it is possible to define a surreal extension of analytic functions, including the arctangent function on the restricted domain $(-\infty, \infty)$ and the $\nlog$ function on the domain $(-\infty, 1- \frac{1}{\infty})$ (the definitions given in~\cite{Van01} and~\cite{Van94} are not necessarily genetic, but are nonetheless interesting). It would be interesting to determine whether their functions agree with ours on the intersections of the domains of definition; however, it does not seem to be straightforward to check this in general. We begin by considering the $\arctan$ function. Let $x \in (-\infty, \infty)$, and observe that $x$ can be uniquely expressed as $r + \varepsilon$, where $r \in \BR$ and $\varepsilon$ is infinitesimal. Then if $p_r$ denotes the Taylor series expansion of the (real) arctangent function at $r$, then define a function $\ol{\arctan}(x) \defeq p_r(r+\varepsilon)$, where the method of computing $p_r$ is given in Chapter 4 of~\cite{Con01}. We may similarly define a function $\ol{\nlog}$ on the domain $(-\infty, 1- \frac{1}{\infty})$. We then want to determine whether $\ul{\arctan} = \ol{\arctan}$ on $(\infty, \infty)$ and whether $\ul{\nlog} = \ol{\nlog}$ on $(\infty, 0]$. Theorem~\ref{funkies} guarantees that $\ul{\arctan} = \ol{\arctan}$ and $\ul{\nlog} = \ol{\nlog}$ on $\mathbb{R}$, for it can be easily seen $\ol{\arctan}(x) = \arctan(x)$ and $\ol{\nlog}(x) = \nlog(x)$ when $x \in \BR$. The following theorem establishes the equalities $\ul{\arctan}(x) = \ol{\arctan}(x)$ and $\ul{\nlog}(x) = \ol{\nlog}(x)$ for a certain proper class of values $x \in \mathbf{No}$:

\begin{theorem}\label{funkyadd}
Let $S \subset \mathbf{No}$ be the proper class of numbers $x$ such that either $L_x = \{0\}$ or $R_x = \{0\}$. Then, $\ul{\arctan}(x) = \ol{\arctan}(x)$ for all $x \in S$ and $\ul{\nlog}(x) = \ol{\nlog}(x)$ for all $x \in S \cap \mathbf{No}_{\leq 0}$.
\end{theorem}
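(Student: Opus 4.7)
The plan is to proceed by transfinite induction on the birthday of $x$, reducing first to the case of a positive infinitesimal. By the oddness of $\arctan$ and the corresponding sign symmetry in the $\ul{\nlog}$ formula, I may assume $L_x = \{0\}$, so that $x > 0$. The first step is a structural reduction: any $x \in S$ with $L_x = \{0\}$ is either a positive real or a positive infinitesimal. Indeed, by Theorem~\ref{defs-thm-2}, $x$ is the oldest surreal lying strictly between $0$ and every element of $R_x$; if $x$ had normal form $r + \varepsilon$ with $r \in \BR_{>0}$ and $\varepsilon$ a nonzero infinitesimal, then $r$ itself would be an older candidate satisfying the same bounds, a contradiction. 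In the real case, the theorem is immediate: Theorem~\ref{funkies} yields $\ul{\arctan}(x) = \arctan(x) = \ol{\arctan}(x)$, and similarly for $\ul{\nlog}$ on $\BR_{\leq 0}$.

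For $x$ a positive infinitesimal with representation $\{0 \mid R_x\}$, the Taylor-series definition gives $\ol{\arctan}(x) = p_0(x)$, the formal Maclaurin series of $\arctan$ evaluated at $x$ via the Chapter~4 formalism of~\cite{Con01}; as a surreal Dedekind cut, this is the oldest $y$ satisfying $[x]_{4n-1} < y < [x]_{4n+1}$ for all $n \in \BZ_{> 0}$. On the genetic side, substituting $x^L = 0$ into Definition~\ref{funky} produces left options $\ul{\arctan}(0) + [x]_{4n-1} = [x]_{4n-1}$ and right options $\ul{\arctan}(0) - [-x]_{4n+1} = [x]_{4n+1}$, using the oddness of the arctan Maclaurin series. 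To these are appended the bookends $\pm\pi/2$ and the options arising from $x^R \in R_x$; the verbal conditions of Definition~\ref{funky} are vacuous at $x^L = 0$ since $|x|$ is infinitesimal.

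It therefore remains to show that the $x^R$-derived options impose no tighter restriction than the $x^L = 0$ options. Concretely, every left option $\ul{\arctan}(x^R) + \bigl[(x - x^R)/(1 + xx^R)\bigr]_{4n+1}$ must be bounded above by some $[x]_{4m-1}$, and symmetrically each right option $\ul{\arctan}(x^R) - \bigl[(x^R - x)/(1 + xx^R)\bigr]_{4n-1}$ bounded below by some $[x]_{4m+1}$. The mechanism is the classical arctangent addition identity $\arctan(x^R) + \arctan\bigl((x - x^R)/(1 + xx^R)\bigr) = \arctan(x)$, together with the Maclaurin error estimates used in the proof of Theorem~\ref{funkies}. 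When $x^R \in \BR$, Theorem~\ref{funkies} gives $\ul{\arctan}(x^R) = \arctan(x^R)$ directly, so the dominance is purely a real-analytic comparison. When $x^R$ is non-real of smaller birthday, the induction hypothesis supplies $\ul{\arctan}(x^R) = \ol{\arctan}(x^R)$ whenever $x^R \in S$, and the addition identity then propagates to the surreal setting. Once dominance is established, $\ul{\arctan}(x) = \{[x]_{4n-1} \mid [x]_{4n+1}\} = p_0(x) = \ol{\arctan}(x)$. The case of $\ul{\nlog}$ on $S \cap \mathbf{No}_{\leq 0}$ is handled by the parallel argument, using the truncations $[z]_n$ and $[z]_{2n+1}$ and the additive identity $\nlog(x^R) + \nlog\bigl((x - x^R)/(1 - x^R)\bigr) = \nlog(x)$.

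The main obstacle is precisely the dominance step when $R_x$ contains non-real $x^R$ that need not themselves lie in $S$: nothing in the definition of $S$ forces elements of $R_x$ into $S$, so the induction hypothesis of the theorem is not uniformly available at $x^R$. The cleanest remedy is to strengthen the induction by establishing, in parallel, a quantitative bound on $\ul{\arctan}(x^R) - \ol{\arctan}(x^R)$ of lower order than the gap between consecutive Maclaurin truncations $[x]_{4n\pm 1}$ — this is plausible because $x$ being infinitesimal makes $(x - x^R)/(1 + xx^R)$ essentially equal to $-1/x^R$ up to infinitesimal correction, so the arctangent truncation absorbs any bounded discrepancy in $\ul{\arctan}(x^R)$. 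Carrying this refinement through birthdays, independently of whether each $x^R$ lies in $S$, is the technical crux of the proof.
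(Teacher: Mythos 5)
Your overall architecture (reduce to $L_x=\{0\}$, peel off the $x^L=0$ options as $[x]_{4n-1}$ and $[x]_{4n+1}$, then show the $x^R$-derived options are redundant) matches the paper's, and your identification of the dominance step as the crux is exactly right. But your proposed mechanism for that step has a genuine gap, which you yourself flag in the last paragraph: you want to \emph{evaluate} each $x^R$-option via the addition identity $\arctan(x^R)+\arctan\bigl((x-x^R)/(1+xx^R)\bigr)=\arctan(x)$, which requires knowing $\ul{\arctan}(x^R)$ (or at least $\ul{\arctan}(x^R)-\ol{\arctan}(x^R)$ to sufficient precision), and since $x^R$ need not lie in $S$, neither the theorem's inductive hypothesis nor Theorem~\ref{funkies} applies. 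The ``strengthened induction with a quantitative bound, carried through birthdays'' is stated as a plan, not carried out, and it is not clear what the induction statement would even be for arbitrary $x^R\notin S$ (for which $\ol{\arctan}$ is only defined when $x^R$ is finite). As written, the proof does not close.

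The paper sidesteps this entirely by never evaluating $\ul{\arctan}(x^R)$. It uses that each $x\in S$ with $L_x=\{0\}$ has the monomial normal form $x=r_x\cdot\omega^{-y_x}$, and argues by \emph{birthdays}: for fixed $a,b\in R_x$, the width of the window cut out by the $a$- and $b$-derived options involves only the finite-degree truncations $\bigl[(x-a)/(1+xa)\bigr]_{4n+1}+\bigl[(b-x)/(1+xb)\bigr]_{4n-1}$ (the $\ul{\arctan}(x^R)$ contributions being common to the matched left/right options), and this quantity has birthday strictly below $z_x$, the least limit ordinal exceeding $\mathfrak{b}(\omega^{-y_x})$. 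On the other hand, $\bigl|[x]_{4n-1}-[x]_{4n+1}\bigr|$ can be made smaller than $\omega^{-\alpha}$ for \emph{every} $\alpha<z_x$ by taking $n$ large. Hence the $[x]_{4n\pm1}$ window eventually sits strictly inside every $x^R$-derived window, the $x^R$ options may be discarded by simplicity, and $\ul{\arctan}(x)=\{-\pi/2,[x]_{4n-1}\mid[x]_{4n+1},\pi/2\}=\ol{\arctan}(x)$. This valuation-theoretic comparison is the idea your proposal is missing; if you want to salvage your route, you should replace the induction on $\ul{\arctan}(x^R)$ with such a birthday (or leading-exponent) estimate that is uniform over $x^R\in R_x$ and makes no reference to the value of $\ul{\arctan}$ at $x^R$. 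The same remark applies verbatim to your treatment of $\ul{\nlog}$.
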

\begin{proof}
We first consider the $\arctan$ function. Suppose $L_x = \{0\}$. Then we have that $\arctan(x)$ is given by the expression
\begin{align*}
\ul{\arctan}(x)  = & \left\{\frac{-\pi}{2}, \left[x\right]_{4n-1}, \ul{\arctan}(x^R)+\left[\frac{x-x^R}{1+xx^R}\right]_{4n+1} \middle\rvert  \right. \\ & \left.  \ul{\arctan}(x^R)-\left[\frac{x^R-x}{1+xx^R}\right]_{4n-1}, \left[x\right]_{4n+1}, \frac{\pi}{2} \right\}.
\end{align*}
Notice that each $x \in S$ has normal form $x = r_x \cdot \omega^{-y_x}$, where $r_x \in \mathbb{R}$ and $y_x \in \mathbf{On}$. Pick $a,b \in R_x$. Because the Maclaurin truncations in the definition of the $\arctan$ function involve only finite powers, we obtain the following bound:
$$\mathfrak{b}\left(\left[\frac{x-a}{1+xa}\right]_{4n+1} + \left[\frac{b-x}{1+xb}\right]_{4n-1}\right) < z_x,$$ where $z_x$ denotes the smallest limit ordinal that is larger than $\mathfrak{b}\left(\omega^{-y_x}\right)$. Notice that for all $\alpha \in \mathbf{On}_{< z_x}$, we can make $$\big|[x]_{4n-1} - [x]_{4n+1}\big| < \omega^{-\alpha}$$ by taking $n$ sufficiently large. It follows that for each choice of $a,b \in R_x$, we can take $n$ sufficiently large so that we have the inequality
$$\big|[x]_{4n-1} - [x]_{4n+1}\big| < \left|\left[\frac{x-a}{1+xa}\right]_{4n+1} + \left[\frac{b-x}{1+xb}\right]_{4n-1}\right|.$$
Thus, in our expression for $\ul{\arctan}(x)$, we may simply throw out all options involving $x^R$. Thus, we find that 
$$\ul{\arctan}(x) = \left\{\frac{-\pi}{2}, \left[x\right]_{4n-1}  \middle\rvert    \left[x\right]_{4n+1}, \frac{\pi}{2} \right\},$$
and the expression on the right-hand-side of the above equality is precisely equal to $\ol{\arctan}(x)$ when $x \in S$. A similar argument works to handle the case when $R_x = \{0\}$.

We next consider the $\nlog$ function. Suppose $R_x = \{0\}$ (we need not consider the case of $L_x = \{0\}$ because the domain of the $\nlog$ function does not contain positive numbers). Then we have that $\nlog(x)$ is given by the expression
\begin{align*}
\ul{\mathrm{nlog}}(x)  =  \left\{\left[x\right]_{n}, \ul{\nlog}(x^L)+\left[\frac{x-x^L}{1-x^L}\right]_{2n+1}  \middle\rvert  \ul{\nlog}(x^L)-\left[\frac{x^L-x}{1-x}\right]_{n}, -\left[\frac{x}{x-1}\right]_{2n+1} \right\}
\end{align*}
Notice that each $x \in S$ has normal form $x = r_x \cdot \omega^{-y_x}$, where $r_x \in \mathbb{R}$ and $y_x \in \mathbf{On}$. Pick $a,b \in L_x$. Because the Maclaurin truncations in the definition of the $\nlog$ function involve only finite powers, we obtain the following bound:
$$\mathfrak{b}\left(\left[\frac{x-a}{1-a}\right]_{2n+1} + \left[\frac{b-x}{1-x}\right]_{n}\right) < z_x,$$ where $z_x$ denotes the smallest limit ordinal that is larger than $\mathfrak{b}\left(\omega^{-y_x}\right)$. Notice that for all $\alpha \in \mathbf{On}_{< z_x}$, we can make $$\left|[x]_{n} + \left[\frac{x}{x-1}\right]_{2n+1}\right| < \omega^{-\alpha}$$ by taking $n$ sufficiently large. It follows that for each choice of $a,b \in R_x$, we can take $n$ sufficiently large so that we have the inequality
$$\left|[x]_{n} + \left[\frac{x}{x-1}\right]_{2n+1}\right| < \left|\left[\frac{x-a}{1-a}\right]_{2n+1} + \left[\frac{b-x}{1-x}\right]_{n}\right|.$$
Thus, in our expression for $\ul{\nlog}(x)$, we may simply throw out all options involving $x^L$. Thus, we find that 
$$\ul{\nlog}(x) = \left\{\left[x\right]_{n} \middle\rvert -\left[\frac{x}{x-1}\right]_{2n+1} \right\},$$
and the expression on the right-hand-side of the above equality is precisely equal to $\ol{\nlog}(x)$ when $x \in S$.
\end{proof}

\begin{remark}
It can be deduced from the proof of Theorem~\ref{funkyadd} that $\ul{\arctan}\left(\frac{1}{\omega}\right)$ is given by the following normal form:
$$\ul{\arctan}\left(\frac{1}{\omega}\right) = \sum_{i = 1}^\infty \frac{(-1)^i \cdot \omega^{-i}}{2i-1}.$$
Then from the result of Example~\ref{compexamp}, we have that $\ul{\arctan}(\omega) + \ul{\arctan}\left(\frac{1}{\omega}\right) \neq \frac{\pi}{2}$, whereas for all $x \in \BR_{> 0}$ we have that $\arctan(x) + \arctan\left(\frac{1}{x}\right) = \frac{\pi}{2}$, thus providing evidence that the functional equation for the arctangent function does not necessarily extend from the reals to the surreals.

Moreover, our method of finding a genetic formula for a surreal extension of a (real) function cannot necessarily be applied to all real analytic functions. An important feature of the (real) functions $\arctan$ and $\nlog$ that allows us to construct surreal extensions with genetic formulas is that they satisfy simple functional equations. Specifically,
$$\arctan(a) + \arctan(b) = \arctan\left(\frac{a+b}{1-ab}\right) \text{ and } \nlog(a) = \nlog(b) + \nlog\left(\frac{a-b}{1-b}\right).$$
For functions (of one variable or of many variables) that satisfy more complicated functional equations or no functional equations at all, it is more difficult to find surreal extensions with genetic formulas.
\end{remark}

\section{Sequences of Numbers and their Limits}\label{seqs}

\noindent In this section, we dicuss limits of sequences. We first explain why it is best to consider $\mathbf{On}$-length sequences. We then provide a tool (Dedekind representation) that we use to define the limit of an $\mathbf{On}$-length sequence and to give a complete characterization of convergent sequences.

\subsection{Finding a Suitable Notion of Limit}\label{priorat}
\noindent In earlier work on surreal calculus, sequences (and series, which are sequences of partial sums) are restricted to have limit-ordinal length (as opposed to having length $\mathbf{On}$). The ``need'' for such a restriction can be explained informally as follows. Suppose we have a sequence $\mathfrak{A} = a_1, a_2, \dots$ of length $\mathbf{On}$. It is possible that for every $m \in \mathbf{On}$, $ \mathfrak{b}(a_i) > m$ for all $i \in \mathbf{On}_{>n}$ and some $n\in \mathbf{On}$. In an attempt to create a genetic formula for the limit of $\mathfrak{A}$, we can write $\lim_{i \rightarrow \mathbf{On}} a_i = \{L\mid R\}$. But, because $\mathfrak{b}(a_i)$ can be made arbitrarily large by taking $i$ large enough, the elements of at least one of $L,R$ would depend on the options of all terms in some subsequence (with length $\mathbf{On}$) of $\mathfrak{A}$. So, the cardinality of at least one of $L,R$ would have initial ordinal that is not less than $\mathbf{On}$, implying that at least one of $L,R$ would be a proper class rather than a set. Thus, the genetic formula $\{L \mid R\}$ of $\lim_{i \rightarrow \mathbf{On}} a_i$ would fail to satisfy Definition~\ref{defs-def-1}. However, if $c_1, c_2, \dots$ is a sequence of length $\alpha$ where $\alpha$ is a limit-ordinal, then there exists $m \in \mathbf{On}$ such that for all $i \in \mathrm{On}_{< \alpha}$, $m > \mathfrak{b}(c_i)$, so $\mathfrak{b}(c_i)$ is bounded. Thus, in any reasonable genetic formula $\{L \mid R\}$ for $\lim_{i \rightarrow \alpha}c_i$, $L$ and $R$ would be small enough to be sets, and $\{L \mid R\}$ would satisfy Definition~\ref{defs-def-1}. It is for this reason that earlier work has found the need to restrict the length of sequences.

While it does preserve Conway's construction of numbers (Definition~\ref{defs-def-1}), restricting sequences to have limit-ordinal length prevents us from obtaining the standard $\varepsilon$-$\delta$ notion of convergence for surreal sequences, as illustrated by the following theorem:
\begin{theorem} \label{thm-priorat-1}
Let $b \in \mathbf{No}$. Then, there does not exist an eventually nonconstant sequence $\mathfrak{A} = t_1, t_2, \dots$ of length $\alpha$, where $\alpha$ is a limit-ordinal, such that for every (surreal) $\varepsilon > 0$, there is an $N \in \mathbf{On}_{< \alpha}$ satisfying $\left| t_n-b\right| <\varepsilon$ whenever $n\in \mathbf{On}_{>N} \cap \mathbf{On}_{<\alpha}$.
\end{theorem}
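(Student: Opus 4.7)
The plan is to argue by contradiction, using the limit-ordinal length of $\mathfrak{A}$ to manufacture a specific positive surreal $\varepsilon$ against which convergence must fail. Assume such a sequence $\mathfrak{A} = t_1, t_2, \dots$ of length $\alpha$ exists and satisfies the stated $\varepsilon$-$\delta$ condition at $b$. Since $\mathfrak{A}$ is eventually nonconstant, it cannot be eventually equal to $b$, so there are arbitrarily large indices $n < \alpha$ with $t_n \neq b$.

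The key step is to form the collection
\[
S \;=\; \bigl\{\,|t_n - b| \;:\; n \in \mathbf{On}_{<\alpha},\ t_n \neq b\,\bigr\}
\]
and set $\varepsilon = \{0 \mid S\}$. Because $\alpha$ is a limit ordinal, $\mathbf{On}_{<\alpha}$ is a proper set, so $S$ is a proper set of strictly positive surreals. Since $0$ is strictly less than every element of $S$, the pair $(\{0\}, S)$ satisfies the hypothesis of Definition~\ref{defs-def-1}, so $\varepsilon$ is a genuine surreal number; Theorem~\ref{defs-thm-1} then gives $0 < \varepsilon < s$ for each $s \in S$.

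Applying the convergence hypothesis to this $\varepsilon$ produces some $N < \alpha$ with $|t_n - b| < \varepsilon$ for every $n \in \mathbf{On}_{>N} \cap \mathbf{On}_{<\alpha}$. By the first paragraph, we may pick such an $n$ with $t_n \neq b$; but then $|t_n - b| \in S$ and so $|t_n - b| > \varepsilon$, flatly contradicting what convergence predicted. The only place one might worry is in confirming that $\{0 \mid S\}$ really is a surreal, and this reduces entirely to $S$ being a proper set rather than a proper class --- exactly the structural windfall of restricting to limit-ordinal lengths, and the very feature the theorem is designed to expose as insufficient for genuine $\varepsilon$-$\delta$ convergence. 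This motivates the paper's subsequent shift to $\mathbf{On}$-length sequences, where $S$ would fail to be a set and this counter-$\varepsilon$ construction would not be available.
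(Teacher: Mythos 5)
Your proof is correct, but it reaches the contradiction by a different route than the paper does. The paper also manufactures a ``bad'' $\varepsilon$, but via birthdays: since $\alpha$ is an ordinal, the birthdays $\mathfrak{b}(t_i)$ form a set of ordinals bounded by some $z$, and the paper takes $\varepsilon = 1/\omega^z$ and argues that any $t_n$ with $0 < |t_n - b| < 1/\omega^z$ must have an option of birthday at least $z$, hence $\mathfrak{b}(t_n) \geq z$, contradicting the choice of $z$. Your construction $\varepsilon = \{0 \mid S\}$ gets the same counter-$\varepsilon$ more directly: it needs only Definition~\ref{defs-def-1} and Theorem~\ref{defs-thm-1}, and it isolates the essential structural point --- a \emph{set} of positive surreals always has a positive surreal strictly below all of its elements --- without invoking birthdays or normal forms. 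Both proofs ultimately rest on the same windfall (set-length of the index class), so the tradeoff is mainly one of machinery: the paper's version makes explicit \emph{how small} $\varepsilon$ can be taken in terms of the complexity of the terms, while yours is shorter and more elementary. One minor bookkeeping difference: the paper first discards the terms equal to $b$, whereas you keep them and instead observe that eventual nonconstancy supplies cofinally many $n < \alpha$ with $t_n \neq b$; either handling is fine, and your observation also guarantees that $S$ is nonempty.
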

\begin{proof}
Suppose such a sequence $\mathfrak{A}$ exists, and assume without loss of generality that none of the $t_i$ are equal to $b$. (If any $t_i$ are equal to $b$, discard them; the remaining subsequence has the same limit as the original sequence.) Now let $z$ be the smallest ordinal such that $z > \sup\{\left|b\right|, \mathfrak{b}(t_1), \mathfrak{b}(t_2), \dots\}$, and let $\varepsilon = 1/\omega^z$. Then there exists some $N \in \mathbf{On}_{< \alpha}$ such that for all $n \in \mathbf{On}_{>N} \cap \mathbf{On}_{<\alpha}$, (1) $t_n \neq b$; and (2) $-\varepsilon = -1/\omega^z< t_n-b<1/\omega^z = \varepsilon$. ($t_n \neq b$ holds for all $n \in \mathbf{On}_{< \alpha}$.) When (1) and (2) are combined, either $(b-1/\omega^{z} < t_n < b)$ or $(b < t_n<b+1/\omega^{z})$ holds. Thus, in any genetic formula for $t_n$, there is at least one left or right option whose birthday is $\geq z$. So for all $n \in \mathbf{On}_{>N} \cap \mathbf{On}_{<\alpha}$, $\mathfrak{b}(t_n)\geq z$, which is a contradiction because we chose $z$ so that $\mathfrak{b}(t_n) < z$.
\end{proof}
From Theorem~\ref{thm-priorat-1}, it is clear that restricting sequences to be of limit ordinal length is not optimal because such sequences do not have surreal limits (by ``limit'' we mean the $\varepsilon$-$\delta$ notion). We must consider $\mathbf{On}$-length sequences in $\mathbf{No}$, not only for the above reasons, but also in light of work by Sikorski, who showed that in a field of character $\omega_{\mu}$ (which is an initial regular ordinal number), we need to consider sequences of length $\omega_{\mu}$ to obtain convergence for nontrivial sequences~\cite{Sik48}. Thus, in $\mathbf{No}$, which has character $\mathbf{On}$, we need to consider sequences of length $\mathbf{On}$.

As mentioned earlier in this subsection, any formula of the form $\{L \mid R\}$ for the limit of an $\mathbf{On}$-length sequence must allow at least one of $L,R$ to be a proper class. Since the representation of numbers by genetic formulas forces $L,R$ to be sets, we need a new representation of numbers. The following is a particularly useful one:

\begin{defn} \label{def-nums-1}
For $x \in \mathbf{No}^{\mathfrak{D}}$, the Dedekind representation of $x$ is $x = \{\mathbf{No}_{<x}\mid \mathbf{No}_{>x}\}.$
\end{defn}

When considering Dedekind representations, we use  the following notational conventions: $x=\{x^\mathscr{L} \mid x^\mathscr{R}\} = \{\mathscr{L}_x \mid \mathscr{R}_x\}$, where we write ``$\mathscr{L}$'',``$\mathscr{R}$'' instead of ``$L$'',``$R$'' to distinguish Dedekind representations from genetic formulas. The following proposition allows us to use Dedekind representations of numbers in all basic arithmetic operations:

\begin{proposition} \label{nums-thm-1}
Every property in Definition~\ref{defs-def-2} holds when the numbers $x_1 = \{L_{x_1} \mid R_{x_1}\}$ and $x_2 = \{L_{x_2}\mid R_{x_2}\}$ are written in their respective Dedekind representations.
\end{proposition}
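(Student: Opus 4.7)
The plan is to isolate one central observation and then apply it uniformly to each clause of Definition~\ref{defs-def-2}. By Theorem~\ref{defs-thm-2}, $x = \{L_x \mid R_x\}$ characterizes $x$ as the oldest surreal strictly between $L_x$ and $R_x$. From this I would first establish the following enlargement principle: if $x$ is the oldest surreal strictly between classes $L$ and $R$, and $A \subset \mathbf{No}_{<x}$, $B \subset \mathbf{No}_{>x}$ are arbitrary classes, then $x$ is still the oldest surreal strictly between $L \cup A$ and $R \cup B$. Indeed, $x$ is strictly between the enlarged classes by construction, and any candidate older than $x$ that lay strictly between $L \cup A$ and $R \cup B$ would \emph{a fortiori} lie strictly between $L$ and $R$, contradicting the defining property of $x$. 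In particular $x = \{\mathbf{No}_{<x} \mid \mathbf{No}_{>x}\}$, so the Dedekind representation is just a maximally enlarged form of any genetic formula.

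For the comparison clause I would check directly that the condition ``no $x_1^{\mathscr{L}} \geq x_2$ and no $x_2^{\mathscr{R}} \leq x_1$'' is equivalent to $x_1 \leq x_2$. The forward direction is immediate: any $\ell \in \mathbf{No}_{<x_1}$ satisfies $\ell < x_1 \leq x_2$, and any $r \in \mathbf{No}_{>x_2}$ satisfies $r > x_2 \geq x_1$. Conversely, if the Dedekind condition held but $x_1 > x_2$, then choosing $\ell = x_2 \in \mathbf{No}_{<x_1}$ would violate ``no $x_1^{\mathscr{L}} \geq x_2$,'' so $x_1 \leq x_2$. The negation clause follows immediately from the enlargement principle combined with the set-theoretic identities $-\mathbf{No}_{>x_1} = \mathbf{No}_{<-x_1}$ and $-\mathbf{No}_{<x_1} = \mathbf{No}_{>-x_1}$, which together show that substituting Dedekind options into the negation formula yields precisely the Dedekind representation of $-x_1$.

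For addition and multiplication the strategy is to feed the Dedekind options into the genetic formulas of Definition~\ref{defs-def-2}, verify by standard order-arithmetic that every resulting left combination lies strictly below the intended output and every right combination strictly above, and then apply the enlargement principle to conclude that the output is unchanged. For addition this reduces to trivialities like $\ell_1 + x_2 < x_1 + x_2 \Leftrightarrow \ell_1 < x_1$. For multiplication, each of the four combinations produces an inequality equivalent to a sign condition: the two left combinations reduce to $(x_1 - \ell_1)(x_2 - \ell_2) > 0$ and $(x_1 - r_1)(x_2 - r_2) > 0$, and the two right combinations to $(x_1 - \ell_1)(r_2 - x_2) > 0$ and $(r_1 - x_1)(x_2 - \ell_2) > 0$, each settled by reading off the sign of each factor from $\ell_i < x_i < r_i$.

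The main (and essentially only) obstacle is the multiplication case, since one must handle all four sign checks uniformly over the proper classes $\mathbf{No}_{<x_i}$ and $\mathbf{No}_{>x_i}$ rather than over the fixed sets of genetic options; a mild care is also required because the Dedekind left/right classes are proper classes, so Theorem~\ref{defs-thm-2} must be invoked in its ``oldest-number'' formulation rather than as a statement about forms $\{L \mid R\}$ with set-sized $L, R$. Once the sign checks are verified, however, the enlargement principle automatically promotes each genetic formula to its Dedekind counterpart, completing the proof.
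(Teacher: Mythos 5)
Your proposal is correct. The paper omits this proof entirely as ``a routine calculation,'' so there is nothing to match it against line by line; your write-up supplies a complete and well-organized version of exactly the routine verification that is being waved at. The enlargement principle is the right central observation: since $L_{x_i} \subseteq \mathbf{No}_{<x_i}$ and $R_{x_i} \subseteq \mathbf{No}_{>x_i}$, the Dedekind-substituted form of each arithmetic formula is an enlargement of the genetic-substituted form, and your sign checks confirm that every added option lands on the correct side, so the value is preserved. The four multiplication inequalities you list are the standard ones and each does reduce to a product of factors whose signs are determined by $\ell_i < x_i < r_i$. You are also right to flag the one genuinely delicate point, namely that a form with proper-class sides is not a number under Definition~\ref{defs-def-1}, so Theorem~\ref{defs-thm-2} must be read as the ``oldest number strictly between the two classes'' characterization; this is precisely the reading the paper adopts when it introduces Dedekind representations. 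One small remark on an alternative route: for addition one can prove something slightly stronger and more direct, namely that the class of left combinations is \emph{all} of $\mathbf{No}_{<x_1+x_2}$ (given $a < x_1 + x_2$, take $\ell_1 = a - x_2 \in \mathbf{No}_{<x_1}$), so the substituted form is literally the Dedekind representation of the sum rather than merely a form with the same value. That surjectivity argument does not transfer cleanly to multiplication, which is why your ``same value via enlargement'' formulation is the more robust one to carry uniformly through all four clauses.
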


\begin{proof}
The proof is a routine calculation, so we omit it.
\begin{comment}
We only prove the addition property because the proofs for the others are similar.
Let $y = x_1+x_2$.
    Then, by part three of Definition~\ref{defs-def-2}, $y = \{x_1^L + x_2, x_1 + x_2^L \mid  x_1^R + x_2, x_1+ x_2^R\}.$ Now allow $x_1$ and $x_2$ to be written as Dedekind representations; i.e.~$x_1 = \{\mathbf{No}_{<x_1}\mid \mathbf{No}_{>x_1}\}$ and $x_2 = \{\mathbf{No}_{<x_2}\mid \mathbf{No}_{>x_2}\}$.
    We claim that the new object $z = \{x_1^\mathscr{L} + x_2, x_1 + x_2^\mathscr{L} \mid  x_1^\mathscr{R} + x_2, x_1+ x_2^\mathscr{R}\}$ is the Dedekind representation of $y$. If $a < y$ and $b > y$, then $\mathbf{No}_{\leq a} \subsetneq \mathscr{L}_z$ and $\mathbf{No}_{\geq b} \subsetneq \mathscr{R}_z$, since there exist $x_1^\mathscr{L} \in \mathbf{No}_{<x_1}$ and $x_1^\mathscr{R} \in \mathbf{No}_{>x_1}$ such that $x_1^\mathscr{L} = a-x_2$ and $x_1^\mathscr{R} = b-x_2$.
    Therefore, $\mathbf{No}_{<y} \subseteq \mathscr{L}_z$ and $\mathbf{No}_{>y} \subseteq \mathscr{R}_z$, which, when combined, gives us $\mathbf{No} \setminus \{y\} \subseteq \mathscr{L}_z \cup \mathscr{R}_z$.
    But, $\mathscr{L}_z \cup \mathscr{R}_z \subsetneq \mathbf{No}$ because $z^\mathscr{L} < y < z^\mathscr{R}$, so $y$ is the only number between the elements of $\mathscr{L}_z$ and those of $\mathscr{R}_z$. Thus, the object $z$ is the Dedekind representation of $y$.
\end{comment}
\end{proof}

\subsection{Evaluation of Limits of Sequences}\label{eval}

The approach we take to defining the limit of an $\mathbf{On}$-length sequence is analogous to the method Conway uses in introducing the arithmetic properties of numbers in Chapter 0 of~\cite{Con01}. Specifically, we first define the limit of an $\mathbf{On}$-length sequence to be a certain Dedekind representation and then prove that this definition is a reasonable one; i.e.~show that it is equivalent to the usual $\varepsilon$-$\delta$ definition for sequences that approach numbers. Of course, we could have defined the limit of an $\mathbf{On}$-length sequence in the usual way with $\varepsilon$ and $\delta$, but our definition is more general because it works for sequences that approach gaps as well as numbers. It is also in the spirit of the subject for the limit of a sequence to be of the form ``$\{$left collection | right collection$\}$.''

\begin{defn} \label{def15}
Let $\mathfrak{A} = a_1, a_2, \dots$ be an $\mathbf{On}$-length sequence. Then, define:
\begin{equation}
\ell(\mathfrak{A}) \defeq \left \{  a : a < \sup \left ( \bigcup_{i \geq 1} \bigcap_{j \geq i} \mathscr{L}_{a_j} \right )  \middle\rvert  b : b > \inf \left ( \bigcup_{i \geq 1} \bigcap_{j \geq i} \mathscr{R}_{a_j} \right ) \right \} \label{seqs-eqn-1}
\end{equation}
\end{defn}

\begin{defn} \label{defs-def-5}
Let $\mathfrak{A} = a_1, a_2, \dots$ be an $\mathbf{On}$-length sequence. We say that the limit of $\mathfrak{A}$ is $\ell$ and write $\lim_{i \rightarrow \mathbf{On}} a_i = \ell$ if the expression on the right-hand-side of (\ref{seqs-eqn-1}) in Definition~\ref{def15} is a Dedekind representation and $\ell = \ell(\mathfrak{A})$.
\end{defn}

\begin{remark}
In the above definition, we make no distinction as to whether $\ell(\mathfrak{A})$ is a number or a gap. Definition~\ref{defs-def-5} holds in both cases, although we do not say that surreal sequences approaching gaps are \emph{convergent}, just like we do not say real sequences approaching $\pm \infty$ are convergent.
\end{remark}

Before we state and prove Theorem~\ref{seqs-thm-1}, which proves the equivalence of Definition~\ref{defs-def-5} with the standard $\varepsilon$-$\delta$ definition, we need the following lemma:

\begin{lemma} \label{seqs-lem-1}
Let $\mathfrak{A} = a_1, a_2, \dots$ be an $\mathbf{On}$-length sequence, and let $\mathfrak{B} = a_k, a_{k+1}, \dots$ be an $\mathbf{On}$-length sequence. Then $\ell(\mathfrak{B}) = \ell(\mathfrak{A})$.
\end{lemma}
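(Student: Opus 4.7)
The plan is to observe that the right-hand side of the definition of $\ell(\mathfrak{A})$ depends on the terms $a_j$ only through the iterated unions of intersections
$$U(\mathfrak{A}) := \bigcup_{i \geq 1} \bigcap_{j \geq i} \mathscr{L}_{a_j} \quad \text{and} \quad V(\mathfrak{A}) := \bigcup_{i \geq 1} \bigcap_{j \geq i} \mathscr{R}_{a_j},$$
so it suffices to show that these are invariant under dropping the initial segment $a_1, \dots, a_{k-1}$. Once this is established, the $\sup$ and $\inf$ match between $\mathfrak{A}$ and $\mathfrak{B}$, and hence the two Dedekind representations defining $\ell(\mathfrak{A})$ and $\ell(\mathfrak{B})$ coincide term for term. (I will assume $\mathfrak{B}$ is indexed either starting at $k$ or, equivalently, re-indexed as $b_i = a_{k+i-1}$; since $\mathbf{On}$ is closed under subtracting a fixed ordinal, this does not affect being an $\mathbf{On}$-length sequence.)

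The first step is thus to prove the identity
$$\bigcup_{i \geq 1} \bigcap_{j \geq i} \mathscr{L}_{a_j} \;=\; \bigcup_{i \geq k} \bigcap_{j \geq i} \mathscr{L}_{a_j},$$
and the analogous identity with $\mathscr{R}$ in place of $\mathscr{L}$. The inclusion $\supseteq$ is immediate: every term on the right appears among the terms indexed $i \geq 1$. For $\subseteq$, fix $x$ in the left-hand side, so $x \in \bigcap_{j \geq i} \mathscr{L}_{a_j}$ for some $i \geq 1$. If $i \geq k$, we are done; otherwise $i < k$, and since $\{j : j \geq k\} \subseteq \{j : j \geq i\}$, we still have $x \in \mathscr{L}_{a_j}$ for every $j \geq k$, so $x \in \bigcap_{j \geq k} \mathscr{L}_{a_j}$, which is a term of the right-hand union.

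Having shown $U(\mathfrak{A}) = U(\mathfrak{B})$ and $V(\mathfrak{A}) = V(\mathfrak{B})$, it follows immediately that
$$\sup U(\mathfrak{A}) = \sup U(\mathfrak{B}) \quad \text{and} \quad \inf V(\mathfrak{A}) = \inf V(\mathfrak{B}),$$
and hence, comparing Dedekind representations term by term, $\ell(\mathfrak{A}) = \ell(\mathfrak{B})$.

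There is essentially no obstacle here; this is a purely set-theoretic manipulation. The only point requiring mild care is the indexing convention: the lemma implicitly identifies the $\mathbf{On}$-indexed sequence $\mathfrak{B} = a_k, a_{k+1}, \dots$ with one indexed over $\mathbf{On}$ itself, which is legitimate because omitting the initial segment $\mathbf{On}_{<k}$ from $\mathbf{On}$ yields a class order-isomorphic to $\mathbf{On}$. Under this identification the tail unions appearing in $\ell(\mathfrak{B})$ are exactly the ones computed above, and the argument goes through.
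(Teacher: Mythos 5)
Your proof is correct and follows essentially the same route as the paper's: both arguments rest on the monotonicity of the tail intersections $\bigcap_{j \geq i} \mathscr{L}_{a_j}$ in $i$, which shows the unions over $i \geq 1$ and $i \geq k$ coincide, hence so do their suprema (and dually for the right classes). Your element-wise verification of the set equality is just a slightly more explicit version of the paper's observation that the terms with $i < k$ are absorbed into the terms with $i \geq k$.
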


\begin{proof}
We need to show that the following statements hold: (1) $\sup \left ( \bigcup_{i \geq k} \bigcap_{j \geq i} \mathscr{L}_{a_j} \right ) = \sup \left ( \bigcup_{i \geq 1} \bigcap_{j \geq i} \mathscr{L}_{a_j} \right )$ and (2) $\inf \left ( \bigcup_{i \geq k} \bigcap_{j \geq i} \mathscr{R}_{a_j} \right ) = \inf \left ( \bigcup_{i \geq 1} \bigcap_{j \geq i} \mathscr{R}_{a_j} \right ).$ We first prove (1). Let $M = \left ( \bigcup_{i \geq k} \bigcap_{j \geq i} \mathscr{L}_{a_j} \right )$ and $N = \left ( \bigcup_{i \geq 1} \bigcap_{j \geq i} \mathscr{L}_{a_j} \right )$. Note that $\bigcap_{j \geq i} \mathscr{L}_{a_j} \subseteq \bigcap_{j \geq i+1} \mathscr{L}_{a_j}$. Therefore, $P = \left(\bigcup_{1 \leq i < k} \bigcap_{j \geq i} \mathscr{L}_{a_j}\right) \subseteq M$. But $P \cup M = N$, implying $ M = N$. So, $\sup(M) = \sup(N)$. We now prove (2). Let $S = \left ( \bigcup_{i \geq k} \bigcap_{j \geq i} \mathscr{R}_{a_j} \right )$ and $T = \left ( \bigcup_{i \geq 1} \bigcap_{j \geq i} \mathscr{R}_{a_j} \right )$. Note that $\bigcap_{j \geq i} \mathscr{R}_{a_j} \subseteq \bigcap_{j \geq i+1} \mathscr{R}_{a_j}$. Therefore, $U = \left(\bigcup_{1 \leq i < k} \bigcap_{j \geq i} \mathscr{R}_{a_j}\right) \subseteq S$. But $U \cup S = T$, implying $ S = T$. So $\inf(S) = \inf(T)$. Statements (1) and (2) suffice to show that $\ell(\mathfrak{B}) = \ell(\mathfrak{A})$.
\end{proof}

\begin{remark}
The analogue of Lemma~\ref{seqs-lem-1} also holds on $\mathbb{R}$ and supports our intuition that the first terms of a sequence have no bearing on the limit of that sequence.
\end{remark}

\begin{theorem} \label{seqs-thm-1}
Let $\mathfrak{A} = a_1, a_2, \dots$ be an $\mathbf{On}$-length sequence. If $\lim_{i \rightarrow \mathbf{On}} a_i = \ell(\mathfrak{A}) \in \mathbf{No}$, then for every (surreal) $\varepsilon > 0$, there is an $N \in \mathbf{On}$ satisfying $\left| a_n-\ell(\mathfrak{A})\right| <\varepsilon$ whenever $n\in \mathbf{On}_{>N}$. Conversely, if $\ell$ is a number such that for every (surreal) $\varepsilon > 0$, there is an $N \in \mathbf{On}$ satisfying $\left| a_n-\ell\right| <\varepsilon$ whenever $n\in \mathbf{On}_{>N}$, then $\lim_{i \rightarrow \mathbf{On}} a_i = \ell$.
\end{theorem}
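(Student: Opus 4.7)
The plan is to prove both implications by first giving a concrete interpretation of the sets in formula~(\ref{seqs-eqn-1}). Write $U = \bigcup_{i \geq 1} \bigcap_{j \geq i} \mathscr{L}_{a_j}$ and $V = \bigcup_{i \geq 1} \bigcap_{j \geq i} \mathscr{R}_{a_j}$. The preliminary observation, immediately unwound from $\mathscr{L}_{a_j} = \mathbf{No}_{<a_j}$ and $\mathscr{R}_{a_j} = \mathbf{No}_{>a_j}$, is that $u \in U$ iff there exists $N \in \mathbf{On}$ with $u < a_j$ for all $j > N$, and dually $v \in V$ iff there exists $N$ with $v > a_j$ for all $j > N$. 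Once this ``eventually above/below'' characterization is in hand, the rest is supremum/infimum bookkeeping.

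For the forward direction, suppose $\ell(\mathfrak{A}) = \ell \in \mathbf{No}$. Since, by Definition~\ref{defs-def-5}, the expression~(\ref{seqs-eqn-1}) is the Dedekind representation of the number $\ell$, the left collection must equal $\mathbf{No}_{<\ell}$ and the right collection must equal $\mathbf{No}_{>\ell}$, which forces $\sup U = \ell = \inf V$ (computed in $\mathbf{No}^\mathfrak{D}$). Given any $\varepsilon > 0$, the inequality $\ell - \varepsilon < \sup U$ produces some $u \in U$ with $u > \ell - \varepsilon$; the preliminary observation then yields $N_1$ such that $a_j > u > \ell - \varepsilon$ for all $j > N_1$. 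Symmetrically, $\inf V < \ell + \varepsilon$ produces $v \in V$ and $N_2$ with $a_j < v < \ell + \varepsilon$ for all $j > N_2$. Taking $N = \max(N_1,N_2)$ completes this direction.

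For the converse, assume the $\varepsilon$-$\delta$ condition holds for a number $\ell$. I would show $\sup U = \ell = \inf V$, after which~(\ref{seqs-eqn-1}) unwinds to $\{a : a < \ell \mid b : b > \ell\} = \{\mathbf{No}_{<\ell} \mid \mathbf{No}_{>\ell}\}$, the Dedekind representation of $\ell$, so that $\ell(\mathfrak{A}) = \ell$ as required. To see $\sup U = \ell$: if $x < \ell$, set $\varepsilon = \ell - x$ and let $N$ be as in the hypothesis, so $a_j > \ell - \varepsilon = x$ for $j > N$, placing $x \in \bigcap_{j > N} \mathscr{L}_{a_j} \subseteq U$; if $x > \ell$, set $\varepsilon = x - \ell$ and let $N$ be as in the hypothesis, so $a_j < \ell + \varepsilon = x$ for $j > N$, and for each $i$ any index $j \geq \max(i, N+1)$ witnesses that $x \notin \bigcap_{j \geq i} \mathscr{L}_{a_j}$, hence $x \notin U$. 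The computation for $\inf V = \ell$ is symmetric, using $V$ in place of $U$.

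The main subtlety rather than obstacle is to keep $U$ distinct from the left collection of $\ell(\mathfrak{A})$: the latter is the downward closure $\{a : a < \sup U\}$ in $\mathbf{No}$, not $U$ itself, and likewise on the right. Once this distinction is observed and the preliminary characterization of membership in $U$ and $V$ is recorded, every remaining step is a straightforward translation between the sup/inf language of~(\ref{seqs-eqn-1}) and the standard $\varepsilon$-$\delta$ language, invoking nothing beyond the order structure of $\mathbf{No}$ and the defining property of suprema and infima in $\mathbf{No}^\mathfrak{D}$.
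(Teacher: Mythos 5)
Your proof is correct, and it rests on the same engine as the paper's --- the observation that membership in $U = \bigcup_{i \geq 1}\bigcap_{j\geq i}\mathscr{L}_{a_j}$ (resp.\ $V$) is exactly the statement that a number is eventually a strict lower (resp.\ upper) bound for the tail of the sequence --- but your execution is noticeably more direct. In the forward direction the paper splits $\mathfrak{A}$ into the subsequences of terms $\geq \ell(\mathfrak{A})$ and $\leq \ell(\mathfrak{A})$ and argues by contradiction that each admits the required tail bound; you bypass the splitting entirely by extracting, for each $\varepsilon$, elements $u \in U$ and $v \in V$ within $\varepsilon$ of $\ell$ and reading off a single $N$ from their membership witnesses. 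In the converse direction the paper rules out two failure modes by contradiction (the cut in (\ref{seqs-eqn-1}) fails to be a Dedekind representation, or it represents a gap), whereas you compute $U$ and $V$ outright, showing $\mathbf{No}_{<\ell} \subseteq U \subseteq \mathbf{No}_{\leq\ell}$ and dually for $V$, so that (\ref{seqs-eqn-1}) is visibly the Dedekind representation of $\ell$; this handles both of the paper's cases at once. Your parenthetical care in distinguishing $U$ from its downward closure $\{a : a < \sup U\}$ is exactly what makes the unexamined case $x = \ell$ harmless. The one caveat, which applies equally to the paper's own proof, is that the step ``left collection $= \mathbf{No}_{<\ell}$ forces $\sup U = \ell$'' implicitly identifies the degenerate cut $\{\mathbf{No}_{<\ell}\mid\mathbf{No}_{\geq\ell}\}$ with $\ell$, a convention only codified later in Definition~\ref{def-nums-2}; for your argument all that is actually needed is that $\ell - \varepsilon < \sup U$ for every $\varepsilon > 0$, which holds regardless of that identification.
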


\begin{proof}
For the forward direction, we must prove that for every $\varepsilon > 0$, there exists $N \in \mathbf{On}$ such that whenever $n\in \mathbf{On}_{>N}$, $|a_n-\ell(\mathfrak{A})|<\varepsilon$. Split $\mathfrak{A}$ into two subsequences, $\mathfrak{A}_+ = b_1, b_2, \dots$ being the subsequence of all terms $\geq \ell(\mathfrak{A})$ and $\mathfrak{A}_- = c_1, c_2, \dots$ being the subsequence of all terms $\leq \ell(\mathfrak{A})$.
Note that the limit of an $\mathbf{On}$-length subsequence equals the limit of its parent sequence.
If either $\mathfrak{A}_+$ or $\mathfrak{A}_-$ has ordinal length (they cannot both be of ordinal length because $\mathfrak{A}$ has length $\mathbf{On}$), by Lemma~\ref{seqs-lem-1}, we can redefine $\mathfrak{A} \defeq a_{\beta}, a_{\beta +1}, \dots$ for some $\beta \in \mathbf{On}$ such that the tail of the new sequence $\mathfrak{A}$ lies entirely in either $\mathfrak{A}_+$ or $\mathfrak{A}_-$, depending on which subsequence has length $\mathbf{On}$.
Let us assume that both $\mathfrak{A}_+, \mathfrak{A}_-$ have length $\mathbf{On}$.
%We first prove that $\mathfrak{A}_+$ has limit $\ell(\mathfrak{A})$.

Observe $\left|b_n-\ell(\mathfrak{A})\right| = b_n-\ell(\mathfrak{A})$. Suppose there does not exist $N_1 \in \mathbf{On}$ such that whenever $n\in \mathbf{On}_{>N_1}$, $b_n-\ell(\mathfrak{A})<\varepsilon$ for some $\varepsilon > 0$. Then for arbitrarily many $n>N_1$, $b_n \geq \ell(\mathfrak{A})+\varepsilon$. Thus, $y = \inf\left(\bigcup_{i \geq 1} \bigcap_{j \geq i} \mathscr{R}_{b_i}\right) \geq \ell(\mathfrak{A})+\varepsilon$, a contradiction because $y = \ell(\mathfrak{A})$ if the expression on the right-hand-side of (\ref{seqs-eqn-1}) is the Dedekind representation of $\ell(\mathfrak{A})$. Therefore, there exists $N_1 \in \mathbf{On}$ such that whenever $n>N_1$, $b_n-\ell(\mathfrak{A})<\varepsilon$. A similar argument shows that there exists $N_2 \in \mathbf{On}$ such that whenever $n \in \mathbf{On}_{>N_2}$, $c_n - \ell(\mathfrak{A}) < \varepsilon$.
Then $N = \max\{N_1, N_2\}$ satisfies Definition~\ref{defs-def-5}.

If $\mathfrak{A}_+$ is of ordinal length, then instead of $N = \max\{N_1, N_2\}$ we have $N = N_2$. Similarly, if $\mathfrak{A}_-$ is of ordinal length, then instead of $N = \max\{N_1, N_2\}$ we have $N = N_1$.

For the other direction, if $\lim_{i \rightarrow \mathbf{On}} a_i \neq \ell$, then there are two cases to consider. The first case is that the expression on the right-hand-side of (\ref{seqs-eqn-1}) in Definition~\ref{def15} is not a Dedekind representation. This would imply that
\begin{equation}\label{extraproof}
\inf \left ( \bigcup_{i \geq 1} \bigcap_{j \geq i} \mathscr{R}_{a_j} \right ) - \sup \left ( \bigcup_{i \geq 1} \bigcap_{j \geq i} \mathscr{L}_{a_j} \right )  >\varepsilon,
\end{equation}
for some $\varepsilon > 0$, because otherwise we would have elements of the right class of a number smaller than elements of the left class. But since the $a_i$ can be made arbitrarily close to $\ell$ by taking $i$ sufficiently large, we can pick $x \in \bigcup_{i \geq 1} \bigcap_{j \geq i} \mathscr{R}_{a_j}$ and $y \in \bigcup_{i \geq 1} \bigcap_{j \geq i} \mathscr{L}_{a_j}$ such that $\left|x-\ell\right|<\varepsilon/2$ and $\left|y-\ell\right|<\varepsilon/2$. By the Triangle Inequality, $\left|x-y\right| \leq \left|x-\ell\right| + \left|y-\ell\right| < \varepsilon/2 + \varepsilon/2 < \varepsilon$, which contradicts the claim in (\ref{extraproof}). Thus, it follows that the expression on the right-hand-side of (\ref{seqs-eqn-1}) in Definition~\ref{def15} is a Dedekind representation.

The second case is that $\ell(\mathfrak{A})$ is a gap, not a number. Since the expression on the right-hand-side of (\ref{seqs-eqn-1}) in Definition~\ref{def15} is a Dedekind representation, we have that
\begin{equation} \label{extraproof2}
\inf \left ( \bigcup_{i \geq 1} \bigcap_{j \geq i} \mathscr{R}_{a_j} \right ) = \sup \left ( \bigcup_{i \geq 1} \bigcap_{j \geq i} \mathscr{L}_{a_j} \right ) = \ell(\mathfrak{A}).
\end{equation}
Now suppose that $\ell(\mathfrak{A}) < \ell$. Using notation from the proof of the first case above, we know that for every $\varepsilon > 0$, we have $\left|y-\ell\right|<\varepsilon$. If we pick $\varepsilon$ such that $\ell -\varepsilon > \ell(\mathfrak{A})$, then we have $y > \ell(\mathfrak{A})$, which contradicts (\ref{extraproof2}). Thus, $\ell(\mathfrak{A}) \not< \ell$. By analogous reasoning in which we replace ``left" with ``right," we find that $\ell(\mathfrak{A}) \not> \ell$. Finally, we have $\ell(\mathfrak{A}) = \ell$, so $\ell(\mathfrak{A})$ cannot be a gap. This completes the proof of the theorem.
\end{proof}

It is only natural to wonder why in our statement of Theorem~\ref{seqs-thm-1} we restrict our consideration to sequences approaching numbers. The issue with extending the theorem to describe gaps is nicely demonstrated in the example sequence $\mathfrak{A} = a_1, a_2, \dots$ defined by $a_i = \omega^{1/i}$. Substituting $\mathfrak{A}$ into the expression on the right-hand-side of (\ref{seqs-eqn-1}) in Definition~\ref{def15} yields $\ell(\mathfrak{A}) = \infty$. However, it is clearly not true that we can make $a_i$ arbitrarily close to $\infty$ by picking $i$ sufficiently large, because for every surreal $\varepsilon \in (0,1)$ and every $i \in \mathbf{On}$, we have that $a_i -\varepsilon > \infty$.

We next consider how our method of evaluating limits of sequences using Dedekind representations can be employed to completely characterize convergent sequences.

\subsection{Cauchy Sequences}\label{cauchyseq}
We can now distinguish between sequences that converge (to numbers), sequences that approach gaps, and sequences that neither converge nor approach gaps. On the real numbers, all Cauchy sequences converge; i.e.~$\mathbb{R}$ is Cauchy complete. However, $\mathbf{No}$ is not Cauchy complete; there are Cauchy sequences of numbers that approach gaps. We devote this subsection to determining what types of Cauchy sequences converge (to numbers) and what types do not. Let us begin our formal discussion of Cauchy sequences by defining them as follows:
\begin{defn} \label{defcauchy}
Let $\mathfrak{A} = a_1, a_2, \dots$ be a sequence of length $\mathbf{On}$. Then $\mathfrak{A}$ is a Cauchy sequence if for every (surreal) $\varepsilon > 0$ there exists $N \in \mathbf{On}$ such that whenever $m, n \in \mathbf{On}_{>N}$, $\left| a_m - a_n \right| < \varepsilon$.
\end{defn}

As follows, we show that $\mathbf{No}$ is not Cauchy complete by providing an example of a sequence that satisfies Definition~\ref{defcauchy} but approaches a gap.

\begin{example} \label{5.4}
Let $\mathfrak{A} = 1, 1 + 1/\omega, 1 + 1/\omega + 1/\omega^2, 1 + 1/\omega + 1/\omega^2+1/\omega^3, \dots$. Note that $\mathfrak{A}$ is a Cauchy sequence because it satisfies Definition~\ref{defcauchy}; i.e.~for every $\varepsilon > 0$, there exists $N \in \mathbf{On}$ such that whenever $m, n \in \mathbf{On}_{>N}$, $\left|\sum_{i \in \mathbf{On}_{\leq m}} 1/\omega^i - \sum_{i \in \mathbf{On}_{\leq n}} 1/\omega^i \right|< \varepsilon$. It is easy to check that the Dedekind representation of $\ell(\mathfrak{A})$ in this example, it is easy to check that the Dedekind representation obtained for  $\ell(\mathfrak{A})$ is that of the object $\sum_{i \in \mathbf{On}} 1/\omega^i$. However, as explained in Subsection~\ref{gapsadd}, $\sum_{i \in \mathbf{On}} 1/\omega^i$ is the normal form of a gap, so $\mathfrak{A}$ is a Cauchy sequence that approaches a gap, a result that confirms the fact that $\mathbf{No}$ is not Cauchy complete.
\end{example}

Four key steps make up our strategy for classifying Cauchy sequences: (1) first prove that sequences approaching Type II gaps are not Cauchy; (2) second conclude that Cauchy sequences either converge, approach Type I gaps, or diverge (as it happens, Cauchy sequences do not diverge, but we only prove this in step 4); (3) third prove that only a certain kind of Type I gap can be approached by Cauchy sequences; and (4) fourth prove that Cauchy sequences that do not approach such Type I gaps are convergent. We execute this strategy as follows.

To prove that sequences approaching Type II gaps are not Cauchy, we need a restriction on the definition of gaps. We restrict Conway's original definition of gaps as follows:

\begin{defn} \label{def-nums-2}
A surreal gap is any Dedekind section of $\mathbf{No}$ that cannot be represented as either $\{\mathbf{No}_{<x} \mid  \mathbf{No}_{\geq x}\}$ or $\{\mathbf{No}_{\leq x} \mid  \mathbf{No}_{> x}\}$ for some $x \in \mathbf{No}$. Furthermore, objects of the form $\{\mathbf{No}_{<x} \mid  \mathbf{No}_{\geq x}\}$ or $\{\mathbf{No}_{\leq x} \mid  \mathbf{No}_{> x}\}$ are defined to be equal to $x$.
\end{defn}

\begin{remark}
From now on, the unqualified word ``gap'' refers only to gaps of the type described in Definition~\ref{def-nums-2}.
\end{remark}

\begin{lemma} \label{Cauchylem}
Let $\mathfrak{A} = a_1, a_2, \dots$. If $\lim_{i \rightarrow \mathbf{On}} a_i = g$ for some gap $g$ of Type II, then the sequence $\mathfrak{A}$ is not Cauchy.
\end{lemma}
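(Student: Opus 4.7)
The plan is to exploit the distinguishing feature of a Type II gap $g$---namely the presence of the ``$\pm \omega^\Theta$'' summand with $\Theta$ itself a gap---and show it forces any sequence approaching $g$ to change leading magnitude too often, and by amounts too large for the Cauchy condition to hold. First I would reduce to the clean case $g = \omega^\Theta$: if $g = s \oplus (\pm \omega^\Theta)$, then $(a_n)$ is Cauchy iff $(a_n - s)$ is Cauchy, and $a_n \to g$ iff $a_n - s \to \pm \omega^\Theta$, so translating by $-s$ and possibly negating puts us in the situation $g = \omega^\Theta$. Since each $a_n$ is a number while $\omega^\Theta$ is a gap, every $a_n$ lies on one side of $\omega^\Theta$; an $\mathbf{On}$-length subsequence lies entirely on one side, and Lemma~\ref{seqs-lem-1} together with the fact that subsequences of Cauchy sequences are Cauchy lets me reduce further to $a_n < \omega^\Theta$ for every $n$.

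The heart of the argument is the analysis of the leading exponent. Writing the normal form $a_n = c_n \omega^{z_n} + (\text{lower})$ with $c_n > 0$ real and $z_n$ a surreal (valid eventually, since $a_n > 0$), the condition $a_n < \omega^\Theta$ combined with the fact that the right options of $\omega^\Theta$ include $b \omega^r$ for every $b \in \BR_{>0}$ and $r \in \mathscr{R}_\Theta$ forces $z_n \in \mathscr{L}_\Theta$. Conversely, convergence $a_n \to \omega^\Theta$ implies the $z_n$ are \emph{cofinal} in $\mathscr{L}_\Theta$: for every $\ell \in \mathscr{L}_\Theta$, pick $\ell' > \ell$ still in $\mathscr{L}_\Theta$ (possible because $\Theta$ is a gap, so $\mathscr{L}_\Theta$ has no maximum), and note that $\omega^{\ell'} < \omega^\Theta$ forces $a_n > \omega^{\ell'}$ eventually, hence $z_n > \ell$ eventually. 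Since $\mathscr{L}_\Theta$ has no maximum, $(z_n)$ cannot be eventually constant, so for every $N$ there exist $m, n > N$ with $z_m > z_n$.

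The final step turns one such comparison $z_m > z_n$ into a Cauchy-violating lower bound on $|a_m - a_n|$. I pick any $m^* \in \mathscr{L}_\Theta$ and set $\ell_0 = m^* - 1$, so that both $\ell_0$ and $\ell_0 + 1$ lie in $\mathscr{L}_\Theta$; then I take $\varepsilon = \omega^{\ell_0}$. Cofinality applied at $\ell = \ell_0 + 1$ yields $z_n > \ell_0 + 1$ for all large $n$, so for $m, n$ large with $z_m > z_n$ the normal form of $a_m - a_n$ has leading term $c_m \omega^{z_m}$, and the standard ``dominant term'' bound gives
\[
a_m - a_n > \tfrac{1}{2} c_m \omega^{z_m} \geq \tfrac{1}{2} c_m \omega^{\ell_0 + 1} = \tfrac{1}{2}(c_m \omega)\cdot\omega^{\ell_0} > \omega^{\ell_0} = \varepsilon,
\]
using that $c_m \omega$ is positive infinite (hence exceeds every real, in particular $2$) because $c_m$ is a positive real. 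This contradicts the Cauchy condition. The case $a_n > \omega^\Theta$ throughout is handled by the symmetric argument with $z_n \in \mathscr{R}_\Theta$ decreasing cofinally to $\Theta$. I expect the main technical care to lie in the choice of $\ell_0$: without ensuring ``$\ell_0 + 1$ is still in $\mathscr{L}_\Theta$,'' the ratio $\omega^{z_m}/\omega^{\ell_0}$ could degenerate to something only infinitesimally greater than $1$, making the coefficient estimate fail.
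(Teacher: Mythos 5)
Your strategy is sound and genuinely different from the paper's proof (the paper instead shows that the distance from a right option of $\ell(\mathfrak{A})$ to $g$ is a gap of the form $\left|h' \oplus (\pm\omega^{\Theta})\right|$ and, by comparing the leading exponent of $h'$ with $\Theta$, that this distance can never drop below $\omega^r$ for a number $r < \Theta$). However, there is one genuine gap in your argument: the step ``$\mathscr{L}_\Theta$ has no maximum because $\Theta$ is a gap.'' In the normal form of a Type II gap, $\Theta$ need not be a gap in the restricted sense of Definition~\ref{def-nums-2}; it is only required to be a Dedekind section for which $\omega^{\Theta}$ is a gap. The paper's own distinguished gap $\infty$ is Type II with $\Theta = \{\mathbf{No}_{\leq 0} \mid \mathbf{No}_{>0}\}$, whose left class has the maximum $0$; more generally, for any number $x$ the supremum of the real multiples of $\omega^{x}$ is a Type II gap with $\max \mathscr{L}_\Theta = x$. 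In such cases your cofinality argument only gives $z_n > \ell$ eventually for each $\ell < x$ together with $z_n \leq x$, where $x$ denotes $\max\mathscr{L}_\Theta$; it does not rule out that $z_n = x$ for all large $n$, and then the engine of your proof --- the leading exponents must keep changing --- stalls, since $a_m - a_n$ could have all the mass in its real coefficient rather than in its exponent. (The dual problem occurs on the other side of $\omega^{\Theta}$ when $\mathscr{R}_\Theta$ has a minimum.)

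The missing case is recoverable, but requires an argument you have not supplied. Suppose $z_n = x$ for all $n \geq N$, and apply the Cauchy condition with $\varepsilon = \omega^{x-1}$: since $a_m - a_n = (c_m - c_n)\omega^{x} + (\text{lower order})$, any $m,n$ with $c_m \neq c_n$ give $\left|a_m - a_n\right| > \tfrac{1}{2}\left|c_m - c_n\right|\omega\cdot\omega^{x-1} > \varepsilon$, so $c_n$ is eventually constant, say equal to $c$ (two reals differing by less than every $1/k$ are equal). Then $a_n < (c+1)\omega^{x} < \omega^{\Theta}$ eventually, contradicting the fact that $a_n$ must eventually exceed every number below $\omega^{\Theta}$. (Alternatively: $a_n > c\omega^{x}$ eventually for each real $c$, and since $\mathbb{R}$ is a proper set the thresholds have an ordinal supremum, which would force the real $c_n$ to eventually exceed every real.) With this supplement, and with $\ell_0$ chosen so that $\ell_0 + 1$ is \emph{strictly below} $\max\mathscr{L}_\Theta$ when that maximum exists, your proof goes through; your remaining steps --- the reduction to $g = \omega^{\Theta}$, the one-sided reduction, the placement of $z_n$ in $\mathscr{L}_\Theta$, the cofinality claim, and the dominant-term estimate --- are correct.
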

\begin{proof}
Suppose $\mathfrak{A}$ is Cauchy. It follows that $\left|a_i^\mathscr{R}-a_j^\mathscr{L}\right|$ can be made arbitrarily close to $0$ if $i, j \in \mathbf{On}$ are taken sufficiently large. Then, if $\ell(\mathfrak{A})$ denotes the Dedekind representation of $g$, $\left|\ell(\mathfrak{A})^\mathscr{R}-\ell(\mathfrak{A})^\mathscr{L}\right|$ can be made arbitrarily close to $0$. It follows that $\left|\ell(\mathfrak{A})^\mathscr{R}-g\right|$ can be made arbitrarily close to $0$ if $i, j \in \mathbf{On}$ are taken sufficiently large. As discussed in the previous section, we know that $g = \sum_{i \in \mathbf{On}_{< \beta}}r_{i}\omega^{y_{i}}\oplus\left(\pm \omega^{\Theta}\right)$ for some gap $\Theta$. Also, as described in Subsection~\ref{defsnums}, $h = \sum_{i \in \mathbf{On}_{< \beta}}r_{i}\omega^{y_{i}}$ is a number, so $\left|g-\ell(\mathfrak{A})^\mathscr{R}\right| = \left|h' \oplus\left(\pm \omega^{\Theta}\right)\right|$, where $h' \in \mathbf{No}$. Now, $\left|h' \oplus\left(\pm \omega^{\Theta}\right)\right|$ is a gap, and $\Theta > \mathbf{Off}$ because $\omega^\mathbf{Off} = 1/\mathbf{On}$ is not a gap by Definition~\ref{def-nums-2}. Because we can make $\left|h' \oplus\left(\pm \omega^{\Theta}\right)\right|$ smaller than any (surreal) $\varepsilon > 0$, pick $\varepsilon = \omega^r$ for some number $r<\Theta$ (which is possible to do because $\Theta > \mathbf{Off}$). Then we can either have (1) $h' > \omega^\Theta > h'-\omega^r$; or (2) $h' < \omega^\Theta < h' + \omega^r$. In case (1) let $z$ denote the largest power of $\omega$ in the normal form of $h'$. Clearly, $z>\Theta$ and $z > r$, so the largest power of $\omega$ in the normal form of $h'-\omega^r$ is $z$. But it follows that $h'-\omega^r > \omega^\Theta$, a contradiction. Similarly, in case (2) let $z$ denote the largest power of $\omega$ in the normal form of $h'$. Clearly, $z<\Theta$, so the largest power of $\omega$ in the normal form of $h'+\omega^r$ is $\max\{z,r\}$. But it follows that $h'+\omega^r < \omega^\Theta$, a contradiction. Thus we have the lemma.
\end{proof}

It might seem like the gap restriction of Definition~\ref{def-nums-2} was imposed as a convenient means of allowing Lemma~\ref{Cauchylem} to hold. Nevertheless, there is sound intuitive reasoning for why we must restrict gaps in this way. In Definition~\ref{defs-def-4}, gaps are defined to be Dedekind sections of $\mathbf{No}$. This means that sections like $1/\mathbf{On} = \{\mathbf{No}_{\leq 0} \mid  \mathbf{No}_{ > 0}\}$ are gaps. But if we allow objects such as $1/\mathbf{On}$ to be gaps, we can create similar ``gaps'' in the real line by claiming that there exist: (1) for each $a \in \mathbb{R}$, an object $>a$ and less than all reals $>a$; and (2) another object $<a$ and greater than all reals $< a$. However, such objects are not considered to be ``gaps'' in the real line. Additionally, without Definition~\ref{def-nums-2}, $\mathbf{On}$-length sequences like $1, 1/2, 1/4, \dots$ would be said to approach the gap $1/\mathbf{On}$ rather than the desired limit $0$, and in fact no $\mathbf{On}$-length sequences would converge at all. For these reasons, we must restrict the definition of gaps.

Now, Cauchy sequences must therefore either converge, approach Type I gaps, or diverge. Let us next consider the case of Cauchy sequences approaching Type I gaps. Designate a Type I gap $g_1 = \sum_{i \in \mathbf{On}} r_i \cdot \omega^{y_i}$ to be a \emph{Type Ia} gap iff $\lim_{i \rightarrow \mathbf{On}} y_i = \mathbf{Off}$ and to be a \emph{Type Ib} gap otherwise. We now state and prove the following lemma about Type I gaps:

\begin{lemma} \label{lem2}
Let $\mathfrak{A} = a_1, a_2, \dots$ be a Cauchy sequence. If $\lim_{i \rightarrow \mathbf{On}} a_i = g$ for some gap $g$ of Type I, then $g$ is a gap of Type Ia.
\end{lemma}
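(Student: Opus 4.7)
The plan is to assume for contradiction that $g$ is of Type Ib, so that (writing $g = \sum_{i \in \mathbf{On}} r_i \omega^{y_i}$ in normal form) the infimum $\beta$ of the decreasing exponent sequence $\{y_i\}$ satisfies $\beta > \mathbf{Off}$. I will exhibit a positive surreal $\omega^m$ such that no element of $\mathbf{No}$ lies within $\omega^m$ of $g$, and then combine this separation with the Cauchy hypothesis and $\lim a_i = g$ to derive a contradiction.

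Since $\beta > \mathbf{Off}$, I first fix a surreal $m$ with $\mathbf{Off} < m < \beta$. The central step, and what I expect to be the main obstacle, is the following distance estimate: for every surreal $x \in \mathbf{No}$, $|g - x| > \omega^m$. The intuition is that any surreal $x$ has a normal form of length some ordinal $\gamma < \mathbf{On}$, so $x$ can cancel at most $\gamma$-many of the $\mathbf{On}$-many terms of $g$. Since every exponent $y_i$ of $g$ exceeds $\beta > m$, the leading surviving term of $g - x$ has exponent either (i) some unmatched $y_\alpha$ (because only set-many of the $\omega^{y_i}$ can have been cancelled), or (ii) some exponent of $x$ exceeding $y_0$. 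In either case, the leading exponent $\xi$ of the normal form of $g - x$ satisfies $\xi > m$, and since $\omega^{\xi}$ dominates $\omega^m$ by an infinite factor we conclude $|g - x| > \omega^m$. Making this rigorous requires some careful bookkeeping about which terms of $x$ can interfere with which terms of $g$; this is the one genuinely technical piece of the argument.

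Given the distance estimate, the rest is a short $\varepsilon$-$\delta$ argument. By Definition~\ref{defs-def-5} combined with the fact that the Dedekind representation of $g$ is $\{\mathbf{No}_{<g} \mid \mathbf{No}_{>g}\}$, the hypothesis $\lim_{i \to \mathbf{On}} a_i = g$ unpacks into: for every surreal $z < g$ we eventually have $a_i > z$, and symmetrically for $z > g$. Applying the Cauchy condition with $\varepsilon = \omega^m / 2$ yields $N$ such that $|a_p - a_q| < \omega^m/2$ for all $p, q > N$. Fix some $p > N$ and suppose $a_p < g$; the case $a_p > g$ is symmetric. By the distance estimate, $g - a_p > \omega^m$, so the surreal $a_p + \omega^m/2$ still lies strictly below $g$. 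Applying the Dedekind-limit statement to $z = a_p + \omega^m/2$ produces some $q > N$ with $a_q > a_p + \omega^m/2$, whence $a_q - a_p > \omega^m/2$, contradicting the Cauchy bound. This contradiction forces $g$ to be of Type Ia.
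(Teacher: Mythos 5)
Your proposal is correct and follows essentially the same route as the paper: the heart of both arguments is the observation that a Type Ib gap $g$ lies at distance greater than $\omega^m$ from every surreal number (for $m$ below all the exponents $y_i$), proved by noting that the leading exponent of the normal form of $g - x$ must be some $y_\alpha$ or larger. The only cosmetic difference is how the contradiction with the Cauchy condition is extracted --- the paper pushes elements of the right class of $\ell(\mathfrak{A})$ to within $\omega^b$ of $g$, while you force two sequence terms past the Cauchy threshold to be more than $\omega^m/2$ apart --- and your level of detail on the normal-form bookkeeping matches the paper's own.
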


\begin{proof}
Suppose $g = \sum_{i \in \mathbf{On}} r_i \cdot \omega^{y_i}$ is of Type Ib. Because the $y_i$ are a decreasing sequence that does not approach $\mathbf{Off}$, they are bounded below by some number, say $b$. Now since $\mathfrak{A}$ is Cauchy, it follows that $\left|a_i^\mathscr{R}-a_j^\mathscr{L}\right|$ can be made arbitrarily close to $0$ if $i, j \in \mathbf{On}$ are taken sufficiently large. Then, if $\ell(\mathfrak{A})$ denotes the Dedekind representation of $g$, $\left|\ell(\mathfrak{A})^\mathscr{R}-\ell(\mathfrak{A})^\mathscr{L}\right|$ can be made arbitrarily close to $0$. It follows that $\left|\ell(\mathfrak{A})^\mathscr{R}-g\right|$ can be made arbitrarily close to $0$ if $i, j \in \mathbf{On}$ are taken sufficiently large. In particular, we can choose $\ell(\mathfrak{A})^\mathscr{R}$ so that $\left|\ell(\mathfrak{A})^\mathscr{R}-g\right|<\omega^b$. Then, either (1) $\ell(\mathfrak{A})^\mathscr{R} > g > \ell(\mathfrak{A})^\mathscr{R} - \omega^b$; or (2) $\ell(\mathfrak{A})^\mathscr{R} < g < \ell(\mathfrak{A})^\mathscr{R} + \omega^b$. In case (1), the largest exponent $z$ of $\omega$ in the normal form of the (positive) object $\ell(\mathfrak{A})^\mathscr{R}-g$ satisfies $z \geq y_\alpha$ for some $\alpha \in \mathbf{On}$. Therefore, $z > b$, so clearly $\ell(\mathfrak{A})^\mathscr{R}-g>\omega^b$, a contradiction. In case (2), the largest exponent $z$ of $\omega$ in the normal form of the (positive) object $g-\ell(\mathfrak{A})^\mathscr{R}$ satisfies $z = y_\alpha$ for some $\alpha \in \mathbf{On}$. Therefore, $z > b$, so clearly $g-\ell(\mathfrak{A})^\mathscr{R}>\omega^b$, a contradiction. Thus, we have the lemma.
\end{proof}

\begin{remark}
Consider the case of a Cauchy sequence $\mathfrak{A} = a_1, a_2, \dots$ that approaches a Type Ia gap $g$. Note that $\mathfrak{A}$ approaches $g$ iff it is equivalent to the sequence $\mathfrak{B}$ defined by successive partial sums of the normal form of $g$. Here, two sequences $\{a_n\}$ and $\{b_n\}$ are considered \emph{equivalent} iff for every (surreal) $\varepsilon > 0$ there exists $N \in \mathbf{On}$ such that whenever $n>N$, $\left|a_n-b_n\right| < \varepsilon$.
\end{remark}

From Lemmas~\ref{Cauchylem} and~\ref{lem2}, we know that there is a Cauchy sequence that approaches a gap iff the gap is of Type Ia (the reverse direction follows easily from the properties of the normal form of a Type Ia gap). We now prove that Cauchy sequences that do not approach gaps of Type Ia must converge (to numbers).

\begin{theorem} \label{seqs-thm-2}
Let $\mathfrak{A} = a_1, a_2, \dots$ be a Cauchy sequence that does not approach a gap of Type Ia. Then $\lim_{i \rightarrow \mathbf{On}} a_i \in \mathbf{No}$.
\end{theorem}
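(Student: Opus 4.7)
The plan is to combine the two preceding lemmas with a Cauchy-based density argument, reducing the problem to an exhaustion of cases. By Lemmas~\ref{Cauchylem} and~\ref{lem2}, any gap approached by a Cauchy sequence must be of Type Ia; so once I show that $\ell(\mathfrak{A})$ is a well-defined element of $\mathbf{No}^{\mathfrak{D}}$ (that is, that the expression on the right-hand-side of (\ref{seqs-eqn-1}) genuinely gives a Dedekind representation), the hypothesis ruling out Type Ia gaps leaves no option except that the limit is a surreal number.

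The real content is therefore to show that Cauchyness alone forces (\ref{seqs-eqn-1}) to be a Dedekind representation. Set
\[
L = \bigcup_{i \geq 1} \bigcap_{j \geq i} \mathscr{L}_{a_j}, \qquad R = \bigcup_{i \geq 1} \bigcap_{j \geq i} \mathscr{R}_{a_j}.
\]
Every element of $L$ is eventually less than every element of $R$, so $\sup L \leq \inf R$ in $\mathbf{No}^{\mathfrak{D}}$. For the reverse inequality, given $\varepsilon > 0$, choose $N \in \mathbf{On}$ via Definition~\ref{defcauchy} so that $|a_m - a_n| < \varepsilon$ whenever $m, n > N$, and fix any single index $k > N$. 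Then $a_k - \varepsilon < a_n < a_k + \varepsilon$ for every $n > N$, which shows $a_k - \varepsilon \in L$ and $a_k + \varepsilon \in R$, so every surreal strictly between $\sup L$ and $\inf R$ must lie in the open interval $(a_k - \varepsilon, a_k + \varepsilon)$. If $\sup L < \inf R$ strictly in $\mathbf{No}^{\mathfrak{D}}$, the density of $\mathbf{No}$ in $\mathbf{No}^{\mathfrak{D}}$ would yield a pair of distinct surreals in that interval, forcing its width to exceed $2\varepsilon$ for every $\varepsilon > 0$, an impossibility. Thus $\sup L = \inf R$, which is exactly what is required for (\ref{seqs-eqn-1}) to be the Dedekind representation of this common value $\ell(\mathfrak{A}) \in \mathbf{No}^{\mathfrak{D}}$.

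With $\ell(\mathfrak{A})$ now securely an element of $\mathbf{No}^{\mathfrak{D}}$, the theorem follows by a short case exhaustion: a Type II gap is excluded by Lemma~\ref{Cauchylem}, a Type Ib gap is excluded by Lemma~\ref{lem2}, and a Type Ia gap is excluded by hypothesis. The main obstacle I anticipate is the density step invoked above, since one must rule out the pathological possibility that $\sup L$ and $\inf R$ differ without any surreal separating them; this should follow from observing that if no surreal lay strictly between them, then the partition of $\mathbf{No}$ into surreals $\leq \sup L$ and surreals $\geq \inf R$ would itself be a Dedekind section defining an element of $\mathbf{No}^{\mathfrak{D}}$ equal simultaneously to both, forcing them to coincide. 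Writing this out cleanly may require the set-theoretic care described in the remark following Definition~\ref{top}.
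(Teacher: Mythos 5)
Your proof is correct and follows essentially the same route as the paper's: both use the Cauchy condition to force the left and right classes in (\ref{seqs-eqn-1}) to meet (so that $\ell(\mathfrak{A})$ is a well-defined element of $\mathbf{No}^{\mathfrak{D}}$), and both then combine Lemmas~\ref{Cauchylem} and~\ref{lem2} with the hypothesis to exclude every type of gap. The paper packages this slightly differently---first proving boundedness, then showing $\sup(C)=\inf(D)\in\mathbf{No}$ by deriving a contradiction with the two lemmas, and finally verifying $\varepsilon$--$N$ convergence---but the key ``two separated surreals would violate Cauchyness'' argument and the case exhaustion on gap types are the same as yours.
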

\begin{proof}
We first prove that $\mathfrak{A}$ is bounded. Let $\alpha \in \mathbf{On}$ such that for all $\beta, \gamma \in \mathbf{On}_{\geq\alpha}$, we have that $\left|a_\beta - a_\gamma\right|<1$. Then for all $\beta \geq \alpha$, by the Triangle Inequality we have that $\left|a_\beta\right| < \left|a_\alpha\right| + 1$. So, for all $\beta \in \mathbf{On}$, we have that $\left|a_\beta\right| \leq \max\{\left|a_1\right|,\left|a_2\right|,\dots, \left|a_\alpha\right| + 1\}$. Thus $\mathfrak{A}$ is bounded.

Now consider the class $C = \{x\in\mathbf{No} : x<a_\alpha$ for all $\alpha$ except ordinal-many$\}$. We next prove that $\sup(C)\in\mathbf{No}$. If not, then $\sup(C)$ is a gap, say $g$, and $g\neq\mathbf{On}$ since $\mathfrak{A}$ is bounded. Because $\mathfrak{A}$ is Cauchy, we claim that $D = \{x\in\mathbf{No} : x>a_\alpha$ for all $\alpha$ except ordinal-many$\}$ satisfies $\inf(D)=g$. If this claim were untrue, then we can find two numbers $p,q \in (\sup(C), \inf(D))$ (there are at least two numbers in this interval because of the gap restriction of Definition~\ref{def-nums-2}) so that $\left|a_\beta-a_\gamma\right| > |p-q|$ for $\mathbf{On}$-many $\beta, \gamma \in \mathbf{On}$, which contradicts the fact that $\mathfrak{A}$ is Cauchy. Thus the claim holds. Now  $\mathfrak{A}$ satisfies Definition~\ref{defs-def-5}, so $\lim_{i \rightarrow \mathbf{On}} a_i = g$, where by assumption $g$ must be a gap of Type II or a gap of Type Ib. But, by Lemmas~\ref{Cauchylem} and~\ref{lem2}, $\mathfrak{A}$ cannot be Cauchy, a contradiction. So $\sup(C) = \inf(D) \in\mathbf{No}$.

We finally prove that for every $\varepsilon > 0$, we can find $\alpha \in \mathbf{On}$ so that for every $\beta \in \mathbf{On}_{>\alpha}$, we have $\left|a_\beta-\sup(C)\right|<\varepsilon$. Suppose the contrary, so that for some $\varepsilon > 0$ we have (1) $a_\beta \leq \sup(C) - \varepsilon$ for $\mathbf{On}$-many $\beta$; or (2) $a_\beta \geq \sup(C) + \varepsilon$ for $\mathbf{On}$-many $\beta$. In the first case, we find that there is an upper bound of $C$ that is less than $\sup(C)$, which is a contradiction, and in the second case, we find that there is a lower bound of $D$ that his greater than $\inf(D)$, which is again a contradiction. Thus we have the theorem.
\end{proof}

\section{Limits of Functions and Intermediate Value Theorem} \label{diff}

\noindent In this section, we present a Dedekind representation for the limit of a function and prove the Intermediate Value Theorem.

\subsection{Evaluation of Limits of Functions} \label{limitfunceval}

Finding a genetic formula for the limit of a function is a task that faces issues similar to those described in Subsection~\ref{priorat}. Suppose $f$ is a function whose domain is $\mathbf{No}$. Then, by Definition~\ref{defs-def-6}, there exists $\delta>0$ such that for some $\beta \in \mathbf{On}$ and for all $\varepsilon \in (0, 1/\omega^{\beta})$, $\lim_{x \rightarrow a} f(x)-\varepsilon < f(x) < \lim_{x \rightarrow a} f(x)+\varepsilon$ whenever $| x-a|  < \delta$. Thus, we can make $\mathfrak{b}(\delta)$ arbitrarily large by taking $\beta$ sufficiently large, so any reasonable genetic formula $\{L \mid R\}$ for $\lim_{x \rightarrow a} f(x)$ would depend on values of $x$ of arbitrarily large birthday. Thus, $L$ and $R$ would again be too large to be sets. Because differentiation is taking the limit of the function $\frac{f(x+h)-f(x)}{h}$ as $h \rightarrow 0$, we also cannot differentiate functions while still satisfying Definition~\ref{defs-def-1}. We conclude that a different representation of numbers (namely the Dedekind representation) that allows $L$ and $R$ to be proper classes is necessary for limits and derivatives of functions to be defined for surreals.

The following is our definition of the limit of a surreal function. Notice that the definition is analogous to that of the limit of an $\mathbf{On}$-length surreal sequence, Definition~\ref{defs-def-5}.

\begin{defn} \label{defs-def-6}
Let $f$ be a function defined on an open interval containing $a$, except possibly at $a$. We say that $f(x)$ converges to a limit $\ell$ as $x \rightarrow a$ and write that $\lim_{x \rightarrow a} f(x) = \ell$ if the expression in (\ref{diff-eqn-1}) is the Dedekind representation of $\ell$.
\begin{equation} \label{diff-eqn-1}
\left \{ p : p < \sup \left ( \bigcup_{b < x < a} \,\,\, \bigcap_{x \leq y < a} \mathscr{L}_{f(y)} \right ) \middle\rvert  q : q > \inf \left ( \bigcup_{a < x < c} \,\,\, \bigcap_{a < y \leq x} \mathscr{R}_{f(y)} \right ) \right \}
\end{equation}
\end{defn}

\begin{remark}
Definition~\ref{defs-def-6} holds for all surreal functions $f$ defined on an open interval containing $a$, except possibly at $a$. In particular, $f$ need not have a genetic definition.
\end{remark}

The Dedekind representation notion of the limit of a surreal function is equivalent to the standard $\varepsilon$-$\delta$ definition.

\begin{theorem} \label{diff-thm-1}
Let $f$ be defined on $(b,c) \subsetneq \mathbf{No}$ containing $a$, except possibly at $a$. If $\lim_{y \rightarrow a} f(y) = \ell \in \mathbf{No}$, then for every (surreal) $\varepsilon > 0$ there exists $\delta > 0$ such that whenever $0 < \left| y-a\right| <\delta$, $\left| f(y)-\ell\right| <\varepsilon$. Conversely, if $\ell$ is a number such that for every (surreal) $\varepsilon > 0$ there exists $\delta > 0$ such that whenever $0 < \left| y-a\right| <\delta$, $\left| f(y)-\ell\right| <\varepsilon$, then $\lim_{y \rightarrow a} f(y) = \ell$.
\end{theorem}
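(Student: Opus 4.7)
The plan is to mirror the proof of Theorem~\ref{seqs-thm-1}, replacing the tail condition $n \in \mathbf{On}_{>N}$ with the proximity condition $0 < |y - a| < \delta$ and using the left and right halves of (\ref{diff-eqn-1}) to control the behavior of $f$ as $y$ approaches $a$ from below and above respectively. The hypothesis that (\ref{diff-eqn-1}) is the Dedekind representation of $\ell \in \mathbf{No}$ translates into the equalities
\begin{equation*}
\sup \bigcup_{b < x < a}\,\,\,\bigcap_{x \leq y < a}\mathscr{L}_{f(y)} \;=\; \ell \;=\; \inf \bigcup_{a < x < c}\,\,\,\bigcap_{a < y \leq x}\mathscr{R}_{f(y)}.
\end{equation*}

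For the forward direction, fix $\varepsilon > 0$ and aim to produce $\delta > 0$. On the left of $a$, I would argue that if there were $y$ arbitrarily close to $a$ from below with $f(y) \leq \ell - \varepsilon$, then for every $x \in (b, a)$ some $y \in [x, a)$ satisfies $f(y) \leq \ell - \varepsilon$, which forces $\bigcap_{x \leq y < a}\mathscr{L}_{f(y)} \subseteq \mathbf{No}_{<\ell-\varepsilon}$; taking the union over $x$ and then the supremum bounds the displayed supremum by $\ell - \varepsilon < \ell$, contradicting its equality with $\ell$. This produces $\delta_\ell^- > 0$ with $f(y) > \ell - \varepsilon$ on $(a - \delta_\ell^-, a)$. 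The dual upper bound $f(y) < \ell + \varepsilon$ on some $(a - \delta_\ell^+, a)$ must be extracted by invoking the full Dedekind-section condition that the left and right classes of (\ref{diff-eqn-1}) coincide with $\mathbf{No}_{<\ell}$ and $\mathbf{No}_{>\ell}$ respectively, so that a persistent ``overshoot'' from below would inject unwanted elements into the opposite class. A symmetric analysis on the right of $a$ produces $\delta_r^\pm > 0$, and $\delta := \min\{\delta_\ell^\pm, \delta_r^\pm\}$ completes this direction.

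For the converse, I would copy the second half of the proof of Theorem~\ref{seqs-thm-1} nearly verbatim. Assume the $\varepsilon$-$\delta$ hypothesis and suppose the Dedekind limit of $f$ at $a$ is not $\ell$. Two cases arise. In the first, (\ref{diff-eqn-1}) fails to be a Dedekind representation, so some $\eta > 0$ satisfies $\inf(\cdots) - \sup(\cdots) > \eta$; the hypothesis applied with $\varepsilon = \eta/2$ produces $p$ in the left union and $q$ in the right union with $|p - \ell|, |q - \ell| < \eta/2$, and the triangle inequality yields $|p - q| < \eta$, the desired contradiction. In the second, (\ref{diff-eqn-1}) represents a gap $g \neq \ell$; taking $\varepsilon = |\ell - g|/2$ in the hypothesis places a witness of one of the classes strictly on the wrong side of $g$, again contradicting the Dedekind structure.

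The main obstacle is the forward direction's overshoot step: the sup and inf in (\ref{diff-eqn-1}) directly bound $f$ only on one side of $\ell$ from each approach (a lower bound from the left-hand supremum and an upper bound from the right-hand infimum), so recovering the remaining one-sided inequalities requires the global property that the two halves jointly exhaust $\mathbf{No} \setminus \{\ell\}$, not merely the isolated values of the displayed sup and inf. This is the continuous analogue of the $\mathfrak{A}_+$/$\mathfrak{A}_-$ splitting trick from the sequence proof, and it is where the argument demands the most care.
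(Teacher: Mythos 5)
Your converse direction is fine and is exactly the paper's intended route: the paper omits this proof entirely, citing analogy with Theorem~\ref{seqs-thm-1}, and your two-case analysis transcribes that argument correctly. The problem is the forward direction, and you have put your finger on precisely the right spot --- but your proposed repair does not work. You suggest that a persistent overshoot $f(y) \geq \ell + \varepsilon$ for $y$ arbitrarily close to $a$ from below would ``inject unwanted elements into the opposite class.'' It cannot: in (\ref{diff-eqn-1}) the left class is computed exclusively from values $f(y)$ with $y < a$ and the right class exclusively from values with $y > a$, so an overshoot on the left is invisible to the right-hand infimum; and it is also invisible to the left-hand supremum, because $\bigcap_{x \leq y < a}\mathscr{L}_{f(y)}$ is governed only by the small values of $f$ on $[x,a)$ (a large $f(y)$ imposes the weaker constraint $z < f(y)$, never a stronger one). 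This is exactly where the analogy with the sequence proof breaks: there, both $\bigcup_i\bigcap_{j\geq i}\mathscr{L}_{a_j}$ and $\bigcup_i\bigcap_{j\geq i}\mathscr{R}_{a_j}$ range over the same tail, so a cofinal collection of terms $\geq \ell+\varepsilon$ forces the right-hand infimum up to $\ell+\varepsilon$; the $\mathfrak{A}_+/\mathfrak{A}_-$ splitting has no one-sided analogue here.

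In fact no argument can close this gap, because the forward direction is false for the definition as literally stated. Take $a = 0$, $\ell = 0$, $f \equiv 0$ on $(0,1)$, and on $(-1,0)$ set $f(y)=0$ if $y = -\omega^{-\alpha}$ for some $\alpha \in \mathbf{On}_{>0}$ and $f(y)=1$ otherwise; both subclasses are cofinal in $0$ from below, so every $\bigcap_{x\leq y<0}\mathscr{L}_{f(y)}$ equals $\mathbf{No}_{<0}\cap\mathbf{No}_{<1}=\mathbf{No}_{<0}$, and (\ref{diff-eqn-1}) is the Dedekind representation of $0$ --- yet $|f(y)|=1$ for $y<0$ arbitrarily close to $0$. (The remark after Definition~\ref{defs-def-6} explicitly allows such non-genetic assignments as functions.) What (\ref{diff-eqn-1}) being the Dedekind representation of $\ell$ actually encodes is only $\liminf_{y\to a^-}f(y) = \ell = \limsup_{y\to a^+}f(y)$. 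To get the stated equivalence one must either strengthen Definition~\ref{defs-def-6} so that both classes range over a full punctured neighborhood of $a$, or impose an extra hypothesis on $f$. Your instinct that this step ``demands the most care'' was correct; it demands more than care, and the paper's proof-by-analogy glosses over the same point.
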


\begin{proof}
The proof is analogous to that of Theorem~\ref{seqs-thm-1}, so we omit it.
\end{proof}

\begin{remark}
As with limits of $\mathbf{On}$-length sequences, we restrict Theorem~\ref{diff-thm-1} to functions that converge to numbers because the standard $\varepsilon$-$\delta$ definition does not generalize to gaps. Also, notice that the expression on the right-hand-side of (\ref{seqs-eqn-1}) in Definition~\ref{def15} as well as Definition~\ref{defs-def-5} can be easily modified to provide a definition of a limit of a function $f(x)$ as $x \rightarrow \mathbf{On}$ or $x \rightarrow \mathbf{Off}$. Finally, the notions of $\lim_{x \rightarrow a^-} f(x)$ and $\lim_{x \rightarrow a^+} f(x)$ are also preserved in Theorem~\ref{diff-thm-1}. Specifically, the left class of $\lim_{x \rightarrow a} f(x)$ describes the behavior of $f(x)$ as $x \rightarrow a^-$ and the right class of $\lim_{x \rightarrow a} f(x)$ describes the behavior of $f(x)$ as $x \rightarrow a^+$.
\end{remark}
Notice that derivatives are limits of functions; i.e.~$\frac{d}{dx} f(x)  = \lim_{h \rightarrow 0} g(h)$, where $g(h) = \frac{f(x+h) - f(x)}{h}$. Therefore, derivatives can be evaluated using Definition~\ref{defs-def-6}. Evaluating limits and derivatives of functions can be made easier through the use of limit laws. We introduce two limit laws in the following proposition:

\begin{proposition} \label{diff-thm-2}
Let $a \in \mathbf{No}$ and $f, g$ be functions, and suppose that $\lim_{x \rightarrow a} f(x)$ and $\lim_{x \rightarrow a} g(x)$ both exist. Then, the following hold: (1) Addition: $\lim_{x \rightarrow a} (f+g)(x) = \lim_{x \rightarrow a} f(x) + \lim_{x \rightarrow a} g(x)$; and (2) Multiplication: $\lim_{x \rightarrow a} (f \cdot g)(x) = \lim_{x \rightarrow a} f(x) \cdot \lim_{x \rightarrow a} g(x).$
\end{proposition}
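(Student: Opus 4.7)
The plan is to reduce both limit laws to their standard real-analytic proofs by invoking Theorem~\ref{diff-thm-1}, which guarantees that whenever a limit is a surreal number (rather than a gap) the Dedekind-representation definition coincides with the usual $\varepsilon$-$\delta$ one. I would therefore first assume that both $L \defeq \lim_{x \to a} f(x)$ and $M \defeq \lim_{x \to a} g(x)$ are numbers; this is the natural reading of the hypothesis, since addition and multiplication behave differently on $\mathbf{No}^{\mathfrak{D}}$ (as noted in Subsection~\ref{gapsadd}) and the converse half of Theorem~\ref{diff-thm-1} is only stated for numerical limits.

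For the addition law, the approach is a direct transport of the real-analysis argument. Given any surreal $\varepsilon > 0$, I would apply the forward direction of Theorem~\ref{diff-thm-1} separately to $f$ and $g$ with tolerance $\varepsilon/2$ to extract $\delta_1, \delta_2 > 0$ such that $|f(y) - L| < \varepsilon/2$ when $0 < |y-a| < \delta_1$ and $|g(y) - M| < \varepsilon/2$ when $0 < |y-a| < \delta_2$. Setting $\delta = \min\{\delta_1, \delta_2\}$, the triangle inequality (valid in $\mathbf{No}$ since it is a totally ordered Field) yields $|(f+g)(y) - (L+M)| < \varepsilon$ whenever $0 < |y-a| < \delta$, and then the converse half of Theorem~\ref{diff-thm-1} gives $\lim_{x \to a}(f+g)(x) = L + M$.

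For the multiplication law, I would use the standard algebraic decomposition
\[
f(y)g(y) - LM = (f(y) - L)\,g(y) + L\,(g(y) - M),
\]
together with a preliminary step bounding $|g|$ near $a$. Applying Theorem~\ref{diff-thm-1} to $g$ with tolerance $1$ yields a punctured neighborhood of $a$ on which $|g(y)| < |M| + 1$. Then, for a given $\varepsilon > 0$, I would choose three radii forcing $|f(y) - L| < \varepsilon/(2(|M|+1))$, $|g(y) - M| < \varepsilon/(2(|L|+1))$, and $|g(y)| < |M|+1$ simultaneously; the displayed identity plus the triangle inequality then yield $|f(y)g(y) - LM| < \varepsilon$ on the minimum of the three radii, and one last invocation of the converse half of Theorem~\ref{diff-thm-1} closes the argument. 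Division by the strictly positive surreals $|L|+1$ and $|M|+1$ is legitimate because $\mathbf{No}$ is a Field.

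The step I expect to be the main obstacle is the case in which one or both of $L, M$ are gaps rather than numbers, since Theorem~\ref{diff-thm-1} does not directly apply there. In that setting one would have to argue directly from the Dedekind representation~(\ref{diff-eqn-1}), showing that the $\sup$ and $\inf$ appearing in $\ell(f+g)$ and $\ell(fg)$ agree with the gap-sum and gap-product of the ingredients (in the sense of~\cite{For04}); this is delicate because gap arithmetic on $\mathbf{No}^{\mathfrak{D}}$ is not term-by-term, and one must verify that the relevant unions and intersections commute with the gap operations. If the proposition is intended only for numerical limits, the sketch above suffices; otherwise a separate unwinding of~(\ref{diff-eqn-1}) must be appended for each gap case.
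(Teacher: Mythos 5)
Your proposal is correct and follows exactly the route the paper itself indicates: the paper omits the proof as a routine calculation but explicitly notes that one may use the standard $\varepsilon$-$\delta$ arguments via the equivalence in Theorem~\ref{diff-thm-1}, which is precisely what you do (your restriction to numerical limits is also the intended reading). Your closing caveat about gap-valued limits is a reasonable observation but is not needed for the proposition as the paper uses it.
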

\begin{proof}
The proof is a routine calculation, so we omit it. One can either use the standard $\varepsilon$-$\delta$ arguments or the Dedekind representation of the limit of a function; because these two notions of limit are equivalent, either method will work.
\end{proof}

The notion of limits of functions gives rise to a weaker version of continuity, which is defined as follows:
\begin{defn}\label{weakcont}
Let $f : A \to \mathbf{No}$ be a function defined on a locally open class $A$. Then $f$ is \emph{weakly continuous} at $a \in A$ if $\lim_{x \rightarrow a} f(x) = f(a)$.
\end{defn}

We now justify the terminology \emph{weakly} continuous:

\begin{proposition}\label{contprop}
Let $f : A \to \mathbf{No}$ be a continuous function defined on a locally open class $A$. Then $f$ is weakly continuous on $A$.
\end{proposition}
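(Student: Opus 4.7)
The plan is to reduce the topological notion of continuity to the $\varepsilon$-$\delta$ formulation and then invoke the converse direction of Theorem~\ref{diff-thm-1} to conclude $\lim_{x \to a} f(x) = f(a)$. Fix $a \in A$; since $A$ is locally open, there is an open interval $(b,c) \subset A$ containing $a$, so $f$ is defined on an open interval around $a$ and the Dedekind-representation limit of Definition~\ref{defs-def-6} makes sense at $a$. By Theorem~\ref{diff-thm-1}, it suffices to establish that for every surreal $\varepsilon > 0$ there exists a surreal $\delta > 0$ such that $|y - a| < \delta$ implies $|f(y) - f(a)| < \varepsilon$.

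To produce such a $\delta$, fix $\varepsilon > 0$ and consider the open interval $B = (f(a) - \varepsilon, f(a) + \varepsilon)$, which is open in $\mathbf{No}$ by Definition~\ref{defopen} (its endpoints lie in $\mathbf{No}$ and it is a trivial union indexed over a singleton set). By the continuity hypothesis, $f^{-1}(B)$ is open in $A$, so by the definition of the subspace topology there exists an open class $X'' \subset \mathbf{No}$ with $f^{-1}(B) = A \cap X''$. Since $a \in f^{-1}(B)$, we have $a \in X''$. Writing $X'' = \bigcup_{i \in I} A_i$ as a union of open intervals indexed over a proper set, pick $i$ with $a \in A_i$, and let $(u,v) = A_i$, where $u, v \in \mathbf{No} \cup \{\mathbf{On}, \mathbf{Off}\}$ and $u < a < v$.

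The only mildly delicate step is selecting an honest surreal $\delta > 0$ with $(a - \delta, a + \delta) \subset (u, v)$, since $u$ or $v$ may equal $\mathbf{Off}$ or $\mathbf{On}$. Set
\[
\delta_1 = \begin{cases} a - u & \text{if } u \in \mathbf{No}, \\ 1 & \text{if } u = \mathbf{Off}, \end{cases} \qquad \delta_2 = \begin{cases} v - a & \text{if } v \in \mathbf{No}, \\ 1 & \text{if } v = \mathbf{On}, \end{cases}
\]
and let $\delta = \min\{\delta_1, \delta_2\}$, which is a positive surreal number. If $|y - a| < \delta$, then in particular $u < y < v$ (when $u \in \mathbf{No}$ this uses $y > a - \delta \geq a - \delta_1 = u$, and analogously for the upper bound; when $u = \mathbf{Off}$ the inequality $y > \mathbf{Off}$ is automatic), so $y \in A_i \subset X''$. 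Moreover, shrinking $\delta$ further if necessary to ensure $(a - \delta, a + \delta) \subset (b,c) \subset A$ using local openness, we obtain $y \in A \cap X'' = f^{-1}(B)$, so $|f(y) - f(a)| < \varepsilon$.

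Having verified the $\varepsilon$-$\delta$ condition at $a$ with $\ell = f(a) \in \mathbf{No}$, the converse half of Theorem~\ref{diff-thm-1} yields $\lim_{x \to a} f(x) = f(a)$, i.e.\ $f$ is weakly continuous at $a$. Since $a \in A$ was arbitrary, $f$ is weakly continuous on $A$. The main substantive step is the passage from the topological preimage condition to an $\varepsilon$-$\delta$ neighborhood; the only minor obstacle is ensuring that the radius $\delta$ one extracts from the open interval $(u,v) \subset X''$ is genuinely a positive surreal rather than a gap, which is handled by the case split above.
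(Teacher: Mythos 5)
Your proof is correct and follows essentially the same route as the paper's: take the preimage of $(f(a)-\varepsilon, f(a)+\varepsilon)$, use openness in $A$ to extract a $\delta$-interval around $a$, and conclude. You simply fill in two details the paper leaves implicit — the case analysis guaranteeing that $\delta$ is a genuine positive surreal when the covering interval has endpoints $\mathbf{On}$ or $\mathbf{Off}$, and the explicit appeal to the converse direction of Theorem~\ref{diff-thm-1} to translate the $\varepsilon$-$\delta$ condition into the Dedekind-representation limit of Definition~\ref{defs-def-6}.
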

\begin{proof}
Let $a \in A, \varepsilon > 0$. Then $f^{-1}((f(a)-\varepsilon, f(a) + \varepsilon))$ is open in $A$ (under the topology of Definition~\ref{defopen}), because $(f(a)-\varepsilon, f(a) + \varepsilon)$ is open (under the same topology). There exists $\delta > 0$ such that $(a-\delta, a+\delta) \subset f^{-1}((f(a)-\varepsilon, f(a) + \varepsilon))$. Then $f((a-\delta, a+\delta)) \subset (f(a)-\varepsilon, f(a) + \varepsilon)$. It follows that $f$ is weakly continuous on $A$.
\end{proof}

\subsection{Intermediate Value Theorem} \label{IVTsection}
Even though the standard proof of the Intermediate Value Theorem (IVT) on $\mathbb{R}$ requires completeness, we show in this subsection that we can prove the IVT for surreals without using completeness by using Definitions~\ref{defopen} and~\ref{defcont} along with the following results regarding connectedness in $\mathbf{No}$. The proofs below are surreal versions of the corresponding proofs from Munkres' textbook~\cite{Mun75}.

\begin{defn}
A class $T \subset \mathbf{No}$ is connected if there does not exist a separation of $T$; i.e.~there does not exist a pair of disjoint nonempty classes $U,V $ that are open in $T$ such that $T = U \cup V$.
\end{defn}

\begin{lemma}\label{intcon}
Every convex class $T \subset \mathbf{No}$ is connected.
\end{lemma}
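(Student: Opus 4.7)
I would argue by contradiction, following the Munkres-style Intermediate Value Theorem proof but with a new case to handle the gap phenomenon peculiar to $\mathbf{No}$. Suppose $T$ is convex and admits a separation $T = U \cup V$ with $U, V$ nonempty, disjoint, and open in $T$. Choose $u \in U$, $v \in V$ and, without loss of generality, $u < v$; convexity gives $[u,v] \subset T$, and the traces $U' = U \cap [u,v]$, $V' = V \cap [u,v]$ form a separation of $[u,v]$ in the subspace topology. Set $c = \sup U'$ inside the Dedekind completion $\mathbf{No}^{\mathfrak{D}}$, which is either a surreal in $[u,v]$ or a gap.

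If $c$ is a number the proof is classical. Either $c \in U'$, and openness of $U'$ supplies an open interval $(a,b) \ni c$ with $(a,b) \cap [u,v] \subset U'$, yielding an element of $U'$ strictly greater than $c$; or $c \in V'$, and openness of $V'$ supplies a neighborhood of $c$ sitting in $V'$, producing an upper bound of $U'$ strictly less than $c$. Either option contradicts $c = \sup U'$.

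The bulk of the work is when $c$ is a gap. I would first observe that every element of $U'$ is a number less than $c$, so $U' \subset [u,c)$ and consequently $(c,v] \subset V'$. Next I would write $U' = W_U \cap [u,v]$ with $W_U = \bigcup_{i \in I}(a_i,b_i)$ a proper-set-indexed union of open intervals, as Definition~\ref{defopen} requires, and argue that every interval actually meeting $[u,v]$ has $b_i < c$. A short case split on whether $a_i < c$ or $a_i \geq c$ shows that $b_i > c$ would force numbers of $(c,v] \subset V'$ to lie in $U'$, contradicting disjointness (the nonemptiness of the offending intersection uses Definition~\ref{def-nums-2}, which forbids any number adjacent to $c$). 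Since $\sup U' = c$, the proper set $\{b_i\}$ is cofinal in $\mathbf{No}_{<c}$ with supremum $c$. The symmetric argument applied to $V'$ yields a proper set $\{c_j\}$ of numbers greater than $c$ with infimum $c$.

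To conclude, I would form the surreal $x = \{\{b_i\}_i \mid \{c_j\}_j\}$, which is a bona fide surreal number since $b_i < c < c_j$ ensures the left-right condition. As a strict upper bound of $\{b_i\}$ and lower bound of $\{c_j\}$, $x$ dominates every element of $\mathbf{No}_{<c}$ and is dominated by every element of $\mathbf{No}_{>c}$, forcing $x \geq c$ and $x \leq c$ in $\mathbf{No}^{\mathfrak{D}}$; because $x$ is a number and $c$ is a gap they are distinct in $\mathbf{No}^{\mathfrak{D}}$, so both inequalities are strict, giving $x > c$ and $x < c$ simultaneously, a contradiction. The step I expect to be most delicate is the case analysis forcing $b_i < c$ for contributing intervals and the dual statement for $V'$; this is where the restricted notion of gap (Definition~\ref{def-nums-2}) is indispensable, since it prevents a number from being adjacent to $c$ and so guarantees the interleaving points needed to rule out the stray interval. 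Morally, the argument shows that the topology of Definition~\ref{defopen}, by forcing proper-set indexing, is engineered precisely so that no gap can be squeezed by proper sets from both sides at once, which is exactly what a separation of a convex class would demand.
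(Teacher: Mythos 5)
Your proof is correct and follows essentially the same route as the paper: reduce to a separation of $[u,v]$, take $c=\sup U'$, handle the number case classically, and in the gap case exhibit $c$ as $\{L\mid R\}$ for proper sets $L,R$ (extracted from the proper-set-indexed interval covers), contradicting that $c$ is a gap. Your treatment of the gap case is in fact a more detailed justification of the paper's terser claim that ``$U'$ and $\mathcal{V}$ are unions of intervals indexed over sets, consequently $w=\{L\mid R\}$ for some pair of sets.''
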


\begin{proof}
Suppose the pair of classes $U,V$ forms a separation of $T$. Then, take $u \in U$ and $v \in V$, and assume without loss of generality that $u < v$ ($u \neq v$ because $U \cap V = \varnothing$). Because $T$ is convex, we have that $[u,v] \subset T$, so consider the pair of classes $U' = U \cap [u,v], V' = V \cap [u,v]$. Notice that (1) because $U \cap V = \varnothing$, we have that $U' \cap V' = \varnothing$; (2) $u \in U'$ and $v \in V'$, so neither $U'$ nor $V'$ is empty; (3) $U'$ and $V'$ are open in $[u,v]$; and (4) clearly $U' \cup V' = [u,v]$. So, the pair $U',V'$ forms a separation of $[u,v]$.

Now consider $w = \sup(U')$. If $w \in \mathbf{No}$, then we have two cases: (1) $w \in V'$ and (2) $w \in U'$. In case (1), because $V'$ is open, there is an interval contained in $V'$ of the form $(x, w]$, for some number or gap $x$. Then $x$ is an upper bound of $U'$ that is less than $w$, because all numbers between $w$ and $v$ inclusive are not in $U'$, which contradicts the definition of $w$. In case (2), because $U'$ is open, there is an interval contained in $U'$ of the form $[w, y)$, for some number or gap $y$. But then any $z \in [w,y)$ satisfies both $z \in U'$ and $z > w$, which again contradicts the definition of $w$.

If $w$ is a gap, then there is no $x \in V'$ such that both $x < w$ and $(x,w) \subset V'$ hold, because then $w \neq \sup(U')$. Thus, the class $\mathcal{V} = \{x \in V' : x > w\}$ is open, for no intervals contained in $V'$ lie across $w$. Now, $w = \{U' \mid \mathcal{V}\}$, but because $U'$ and $\mathcal{V}$ are open, they are unions of intervals indexed over \emph{sets}. Consequently, $w = \{L \mid R\}$ for some pair of sets $L,R$, so $w \in \mathbf{No}$, which contradicts our assumption that\mbox{ $w$ is a gap.}
\end{proof}

\begin{lemma}\label{confunc}
If $f$ is continuous on $[a,b]$, then the image $f([a,b])$ is connected.
\end{lemma}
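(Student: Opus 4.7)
The plan is to argue by contradiction, reducing the connectedness of $f([a,b])$ to the connectedness of the interval $[a,b]$ that was already established in Lemma~\ref{intcon}. First I would note that $[a,b]$ is convex (any surreal between $a$ and $b$ lies in $[a,b]$), so Lemma~\ref{intcon} applies and tells us that $[a,b]$ is connected in the subspace topology inherited from $\mathbf{No}$.

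Next, suppose for contradiction that $f([a,b])$ admits a separation; i.e.\ there exist disjoint nonempty subclasses $U, V$ of $f([a,b])$, each open in $f([a,b])$, such that $f([a,b]) = U \cup V$. By the definition of the subspace topology recorded just before Definition~\ref{defcont}, we can write $U = f([a,b]) \cap U''$ and $V = f([a,b]) \cap V''$ for some classes $U'', V''$ that are open in $\mathbf{No}$ (in the sense of Definition~\ref{defopen}). Now I would pull these back and consider the classes $f^{-1}(U'') \cap [a,b]$ and $f^{-1}(V'') \cap [a,b]$.

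The main step is then to verify these pullbacks form a separation of $[a,b]$. Continuity of $f$ (Definition~\ref{defcont}) gives that each pullback is open in $[a,b]$. Because $f$ sends $[a,b]$ into $f([a,b]) = U \cup V$, we have $f^{-1}(U'') \cap [a,b] = f^{-1}(U)$ and similarly for $V$, so the two classes are disjoint (as $U$ and $V$ are), nonempty (as $U$ and $V$ are, with any preimage point witnessing nonemptiness), and their union is all of $[a,b]$. This contradicts the connectedness of $[a,b]$ provided by Lemma~\ref{intcon}, so no such separation can exist and $f([a,b])$ is connected.

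The only place requiring genuine care is the bookkeeping between open classes of $\mathbf{No}$ and open classes in the subspace topology on $f([a,b])$ and on $[a,b]$; in particular one must make sure that the openness of $U$ in $f([a,b])$ really does arise by intersecting with a class open in $\mathbf{No}$ (which is built into our subspace topology), so that Definition~\ref{defcont} applies to the pullback. Aside from that, the argument is the standard topological one transported verbatim into the surreal setting, and no completeness or Cauchy-type hypothesis is needed.
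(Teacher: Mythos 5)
Your proof is correct and follows essentially the same route as the paper's: pull back a hypothetical separation of $f([a,b])$ through $f$, use continuity to conclude the preimages are open in $[a,b]$, and contradict the connectedness of the convex class $[a,b]$ given by Lemma~\ref{intcon}. The extra care you take with the subspace-topology bookkeeping is a welcome refinement but does not change the argument.
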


\begin{proof}
Suppose $f([a,b])$ is not connected. Then there exists a separation $U,V$ of $f([a,b])$; i.e. there exists a pair of disjoint nonempty open classes $U,V$ such that $f([a,b]) = U \cup V$. Then, we have the following: (1) $f^{-1}(U)$ and $f^{-1}(V)$ are disjoint because $U,V$ are disjoint; (2) $f^{-1}(U)$ and $f^{-1}(V)$ are nonempty because the map from $[a,b]$ to the image $f([a,b])$ under $f$ is clearly surjective; (3) $f^{-1}(U)$ and $f^{-1}(V)$ are open in $[a,b]$ because $f$ is continuous, so the preimages of open classes are open in $[a,b]$; and (4) $[a,b] = f^{-1}(U) \cup f^{-1}(V)$ because any point whose image is in $U$ or $V$ must be in the corresponding preimage $f^{-1}(U)$ or $f^{-1}(V)$. Thus, the pair $f^{-1}(U), f^{-1}(V)$ forms a separation of $[a,b]$. But this contradicts Lemma~\ref{intcon}, because intervals are convex, so $f([a,b])$ is connected.
\end{proof}

\begin{theorem}[IVT]
If $f$ is continuous on $[a,b] \subset \mathbf{No}$, then for every $u \in \mathbf{No}$ that lies between $f(a)$ and $f(b)$, there exists a number $p \in [a,b]$ such that $f(p) = u$.
\end{theorem}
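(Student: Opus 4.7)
The plan is to deduce the theorem almost immediately from the connectedness machinery in Lemmas~\ref{intcon} and~\ref{confunc}, following the standard real-variable argument but using the topology of Definition~\ref{defopen} in place of the usual metric topology. First I would dispose of the degenerate cases: if $u = f(a)$ take $p = a$, and if $u = f(b)$ take $p = b$. So we may assume $u$ lies strictly between $f(a)$ and $f(b)$, and without loss of generality that $f(a) < u < f(b)$.

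Next, suppose toward a contradiction that $u \notin f([a,b])$. The plan is to build a separation of $f([a,b])$ using $u$ as the dividing point. Consider the classes
\[
U = (\mathbf{Off}, u) \quad \text{and} \quad V = (u, \mathbf{On}),
\]
each of which is open in $\mathbf{No}$ by Definition~\ref{defopen} (their endpoints lie in $\mathbf{No} \cup \{\mathbf{On}, \mathbf{Off}\}$, and each is a single open interval, hence a union of open intervals over a one-element set). Define $U' = f([a,b]) \cap U$ and $V' = f([a,b]) \cap V$, which are open in the subspace topology on $f([a,b])$.

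I would then verify the four properties needed for $(U', V')$ to be a separation: (i) $U' \cap V' = \varnothing$ since $U \cap V = \varnothing$; (ii) $U' \cup V' = f([a,b])$ because we assumed $u \notin f([a,b])$, so every image value is strictly less than or strictly greater than $u$; (iii) $U' \neq \varnothing$ since $f(a) \in U'$, and (iv) $V' \neq \varnothing$ since $f(b) \in V'$. This produces a separation of $f([a,b])$, contradicting Lemma~\ref{confunc}, which asserts that $f([a,b])$ is connected. Hence some $p \in [a,b]$ satisfies $f(p) = u$.

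The main obstacle I anticipate is essentially bookkeeping: confirming that $U$ and $V$ genuinely qualify as open classes under the restrictive Definition~\ref{defopen} (in particular, that intervals with endpoint $\mathbf{On}$ or $\mathbf{Off}$ are permitted as open intervals, which the definition explicitly allows), and that the subspace topology behaves as expected so that $U', V'$ are open in $f([a,b])$. No Cauchy completeness or supremum principle on $\mathbf{No}$ is required—connectedness of intervals, secured by Lemma~\ref{intcon} via the surreal topology, does all the work.
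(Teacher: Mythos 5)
Your proposal is correct and follows essentially the same route as the paper's own proof: both separate $f([a,b])$ into $f([a,b]) \cap (\mathbf{Off},u)$ and $f([a,b]) \cap (u,\mathbf{On})$ and invoke Lemma~\ref{confunc} (hence Lemma~\ref{intcon}) for the contradiction. The only cosmetic difference is that you normalize to $f(a) < u < f(b)$ where the paper checks both orderings directly.
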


\begin{proof}
Assume that neither $f(a) = u$ nor $f(b) = u$ (if either of these were true, we would have the theorem). Consider the classes $U = f([a,b]) \cap (\mathbf{Off}, u)$ and $V = f([a,b]) \cap (u, \mathbf{On})$. Notice that (1) $U \cap V = \varnothing$ because $(\mathbf{Off}, u) \cap (u, \mathbf{On}) = \varnothing$; (2) neither $U$ nor $V$ is empty because either $f(a) < f(b)$ so $f(a) \in U$ and $f(b) \in V$, or $f(a) > f(b)$ so $f(a) \in V$ and $f(b) \in U$; and (3) both $U$ and $V$ are open in $f([a,b])$ (but not necessarily in $\mathbf{No}$) because each is the intersection of $f([a,b])$ with an open ray. Now assume there is no $p \in [a,b]$ such that $f(p) = u$. Because $f([a,b]) = U \cup V$, we have that the pair $U,V$ is a separation for $f([a,b])$, so $f([a,b])$ is not connected. But this violates Lemma~\ref{confunc}, so there is a $p \in [a,b]$ such that $f(p) = u$.
\end{proof}

That the IVT holds for surreals does not of course prevent functions from reaching numbers at gaps, but it does prevent continuous functions from having isolated zeroes at gaps.\footnote{The zero-function reaches $0$ at every gap.} More precisely, a continuous function $f$ can reach $0$ at a gap $g$ iff for every (surreal) $\varepsilon > 0$ there exists a zero of $f$ in some open interval of width $\varepsilon$ containing $g$. The only continuous functions we know of that reach a number at a gap are constant on an open interval containing the gap, but we do not yet know whether it is impossible for a continuous function to reach a number at a gap without being locally constant.

\section{Series and Integrals} \label{ips}

\noindent In this section, we present our methods of evaluating series and infinite ``Riemann'' sums. We also prove the Fundamental Theorem of Calculus as a method of evaluating integrals easily, as long as we have a definition of integration. A consistent genetic definition or Dedekind representation of Riemann integration that works for all functions nevertheless remains to be discovered.

\subsection{Series} \label{powersection}
The evaluation of series in real analysis usually entails finding a limit of a sequence of partial sums. Because Definition~\ref{defs-def-5} allows us to find limits of sequences, it might seem as though evaluating series as limits of partial sum sequences is possible. However, it is often the case that we do not know a closed-form expression for the $\alpha^\mathrm{th}$ partial sum of a series, where $\alpha \in \mathbf{On}$. Without such an expression, we cannot determine the left and right options of the $\alpha^\mathrm{th}$ partial sum and therefore cannot use Definition~\ref{defs-def-5}. Also, suppose $\zeta$ is a limit-ordinal. Then by Theorem~\ref{thm-priorat-1}, we cannot claim that the $\zeta^\mathrm{th}$ partial sum is the limit of previous partial sums, for this limit might be a gap. The next example illustrates how the partial sums of an $\mathbf{On}$-length series can become ``stuck" at a gap:

\begin{example} \label{ips-examp-1}
Consider the series $s = \sum_{i \in \mathbf{On}} 1/2^i$. The sum of the first $\omega$ terms, $\sum_{i \in \mathbf{On}_{<\omega}} 1/2^i$, is the gap $g$ between numbers with real part less than $2$ and numbers with real part  at least $2$. By the definition of gap addition described in Conway's book~\cite{Con01}, it is clear that $g + \varepsilon = g$ for all infinitesimals $\varepsilon$. But note that all remaining terms in the series, namely $1/2^\omega, 1/2^{\omega + 1}, \dots$, are infinitesimals, so the sequence of partial sums for the entire series is: $1, 3/2, 7/4, \dots, g, g, \dots, g, g, \dots$. So, if we define the sum of a series to be the limit of its partial sums, we find that $s = g$, a result that is against our intuition that $s=2$ (which is true for the real series $\sum_{i = 0}^{\infty} 1/2^i$).
\end{example}

Because of the problem described in Example~\ref{ips-examp-1}, we cannot use the standard notion of ``sum'' in order to create surreal series that behave like real series. Our solution to this problem is to extrapolate natural partial sums (which can be evaluated using part 3 of Definition~\ref{defs-def-2}) to ordinal partial sums. We define this method of extrapolation as follows. Denote by $\ol{K}$ the closure of the set of functions $K$ containing rational functions, $\exp(x)$, $\log(x)$, and $\ul{\arctan}(x)$ under the operations of addition, multiplication, and composition. Then, we have the following:

\begin{defn} \label{ips-def-1}
Let $\sum_{i=0}^{\mathbf{On}}a_i$ be a series. Suppose that for all $n \in \mathbb{N}$, $\sum_{i=0}^{n}a_i = f(n)$, where $f \in \ol{K}$. Then for all $\alpha \in \mathbf{On}$, define $\sum_{i=0}^{\mathbf{\alpha}}a_i \defeq f(\alpha)$.
\end{defn}

\begin{remark}
The elements of $\ol{K}$ are what we mean by ``closed-form expressions.'' Notice that closed-form expressions are well-defined.
\end{remark}

Definition~\ref{ips-def-1} is intended to be used with Definition~\ref{defs-def-5}. Specifically, $\sum_{i=0}^{\mathbf{On}}a_i = \lim_{\alpha \rightarrow \mathbf{On}} f(\alpha)$, which is easy to evaluate using  Definition~\ref{defs-def-5}. The rule in Definition~\ref{ips-def-1} does not necessarily work when the $n^{\mathrm{th}}$ partial sum of a series is not an element of $\ol{K}$. If we return to the case in Example~\ref{ips-examp-1}, we see that Definition~\ref{ips-def-1} does indeed yield $\sum_{i\in\mathbf{On}} 1/2^i=2$, as desired. Moreover, we find that $\frac{1}{1-x} = 1+x+x^2+\dots$ holds on the interval $-1 < x < 1$, as usual. However, the power series of other known functions including $e^x,$ $\ul{\arctan}(x),$ and $\mathrm{nlog}(x)$ cannot be evaluated using Definition~\ref{ips-def-1}. Nevertheless, extrapolation seems to be useful for evaluating series, and further investigation might lead to a more general method of evaluating series.

\subsection{Integrals and the Fundamental Theorem of Calculus} \label{integralFTC}
Real integrals are usually defined as limits of Riemann sums. Because Definition~\ref{defs-def-5} gives us the limits of $\mathbf{On}$-length sequences and since we know how to evaluate certain kinds of sums using Definition~\ref{ips-def-1}, we now discuss how we can evaluate certain Riemann sums. We define distance and area, which are necessary for integration. We now define distance in $\mathbf{No}^2$ to be analogous to the distance metric in $\mathbb{R}^2$:
\begin{defn} \label{ips-def-2}
Let $A = (a_1,a_2), B = (b_1, b_2) \in \mathbf{No}^2$. Then the distance $AB$ from $A$ to $B$ is defined to be $AB = \sqrt{(b_1-a_1)^2 + (b_2-a_2)^2}$.
\end{defn}
It is shown in Alling's book~\cite{All87} that distance as defined in Definition~\ref{ips-def-2} satisfies the standard properties of distance that holds in $\mathbb{R}$. Also, in $\mathbb{R}^2$, a notion of the area of a rectangle exists. We define the area of a rectangle in $\mathbf{No}^2$ to be analogous to the area of a rectangle in $\mathbb{R}^2$:

\begin{defn} \label{ips-def-3}
If $ABCD$ is a rectangle in $\mathbf{No}^2$, its area is $[ABCD] = AB \cdot BC$.
\end{defn}

Because we have defined the area of a rectangle in $\mathbf{No}^2$, we can consider Riemann sums. It is easy to visualize a Riemann sum in which the interval of integration is divided into finitely many subintervals. However, when the number of subintervals is allowed to be any ordinal, it is not clear what adding up the areas of an infinite number of rectangles means. For this reason, earlier work has restricted Riemann sums to have only finitely many terms. Just as we did with series earlier, we make an ``extrapolative'' definition for what we want the $\alpha^\mathrm{th}$ Riemann sum of a function to be, thereby defining what we mean by an infinite Riemann sum. Denote by $\ol{K}'$ the closure of the set of functions $K'$ containing rational functions of three variables, $\exp(x)$, $\log(x)$, and $\ul{\arctan}(x)$ under the operations of addition, multiplication, and composition (again, we refer to the elements of $\ol{K}'$ as ``closed-form expressions''). Then, we have the following:

\begin{defn} \label{ips-def-4}
Let $f$ be a function continuous except at finitely many points on $[a,b]$. Suppose that for all $n \in \mathbb{N}$ and $c,d \in [a,b]$ such that $c\leq d$, $g(n,c,d) = \sum_{i = 0}^n \frac{d-c}{n} f\left(c+i\left(\frac{d-c}{n}\right)\right)$, where $g \in \ol{K}'$. Then, for all $\alpha \in \mathbf{On}$, define the $\alpha^{\mathrm{th}}$ Riemann sum of $f$ on $[a,b]$ to be $g(\alpha,a,b)$.
\end{defn}

For functions like polynomials and exponentials, Definition~\ref{ips-def-4} in combination with Definition~\ref{defs-def-5} evaluates integrals correctly. In particular, if $g(\alpha,a,b)$ is known for some function $f(x)$ on the interval $[a,b]$, then $\int_a^b f(x) dx = \lim_{\alpha \rightarrow \mathbf{On}} g(\alpha,a,b)$. We demonstrate this method as follows:

\begin{example} \label{ips-examp-2}
Let us evaluate $\int_a^b \exp(x) dx$ by using our ``extrapolative notion'' of Riemann sums. In this case, we have the following, which results when we use Definition~\ref{ips-def-4}:
\begin{eqnarray*}
g(\alpha,a,b) & = & \frac{b-a}{\alpha} \sum_{i=0}^\alpha \exp\left(a + i\left(\frac{b-a}{\alpha}\right)\right) \\
          & = & \frac{(a-b)\exp(a+a/\alpha)}{\alpha(\exp(a/\alpha)-\exp(b/\alpha))}\left(-1 + \exp\left(\frac{(b-a)(\alpha+1)}{\alpha}\right)\right)
\end{eqnarray*}
It is now easy to see that $g \in \ol{K}'$ and that $\lim_{\alpha \rightarrow \mathbf{On}} g(\alpha,a,b) = \exp(b)-\exp(a)$, as desired. In the case where $a=0$ and $b=\omega$, we have $\int_0^\omega \exp(x) dx = \exp(\omega)-1$, which resolves the issue with the integration methods used in Conway's book~\cite{Con01} and Fornasiero's paper~\cite{For04}.
\end{example}

In real calculus, limits of Riemann sums are difficult to evaluate directly for most functions. In order to integrate such functions, the notion of primitive is used. However, we require the Fundamental Theorem of Calculus (FTC) in order to say that finding a primitive is the same as evaluating an indefinite integral.

We now state and prove the surreal analogue of the Extreme Value Theorem (EVT), which is required to prove the FTC. To prove the EVT, we need some results regarding ``strong compactness,'' which is defined in the surreal sense as follows:

\begin{defn}\label{defcompact}
Let $X \subset \mathbf{No}$. Then $X$ is \emph{strongly compact} if there exists a covering of $X$ by a proper set of subintervals open in $X$, where each subinterval has endpoints in $\mathbf{No}\cup\{\mathbf{On},\mathbf{Off}\}$, and if for every such covering there exists a finite subcovering.
\end{defn}

\begin{remark}
A subinterval of $X$ is by definition the intersection of a subinterval of $\mathbf{No}$ with $X$. The endpoints of a subinterval are its lower and upper bounds.

Observe that a strongly compact subset of $\mathbf{No}$ is a finite union of subintervals with endpoints in $\mathbf{No} \cup \{\mathbf{On}, \mathbf{Off}\}$.
\end{remark}

The restriction of coverings to proper sets in Definition~\ref{defcompact} is crucial, for otherwise the next lemma would not hold:

\begin{proposition}[From Fornasiero's paper~\cite{For04}] \label{forn'slemma}
$\mathbf{No}$ is strongly compact.
\end{proposition}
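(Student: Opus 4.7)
The plan is to establish the two conjuncts of Definition~\ref{defcompact} in turn. Existence of a covering is immediate: $\{(\mathbf{Off},\mathbf{On})\}$ is a single-interval cover of $\mathbf{No}=(\mathbf{Off},\mathbf{On})$ with endpoints in $\mathbf{No}\cup\{\mathbf{On},\mathbf{Off}\}$ and is indexed by a singleton, hence by a proper set. The substantive task is to show that an arbitrary such covering $\{A_i\}_{i\in I}=\{(a_i,b_i)\}_{i\in I}$, with $I$ a proper set, admits a finite subcovering.

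First I would record two preliminary observations: (i) some $A_i$ has $a_i=\mathbf{Off}$, and (ii) some $A_j$ has $b_j=\mathbf{On}$. For (i), were all left endpoints in $\mathbf{No}$, they would form a proper set $L$, and the number $\{\mid L\}\in\mathbf{No}$ would be strictly less than every $a_i$ and hence uncovered; (ii) is symmetric. I would then introduce the auxiliary class $S=\{x\in\mathbf{No}\cup\{\mathbf{On}\}:(\mathbf{Off},x)\text{ is covered by finitely many }A_i\}$, which by (i) is nonempty, and set $s=\sup S\in\mathbf{No}^{\mathfrak{D}}\cup\{\mathbf{On}\}$.

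The argument then splits into three cases according to the nature of $s$. If $s\in\mathbf{No}$, then $s$ lies in some $A_k=(a_k,b_k)$ with $a_k<s<b_k$, and the definition of supremum produces $x\in S$ with $a_k<x\le s$; adjoining $A_k$ to a finite subcover of $(\mathbf{Off},x)$ yields a finite subcover of $(\mathbf{Off},b_k)$, so $b_k\in S$, contradicting $s=\sup S$. If $s=\mathbf{On}$, then by (ii) some $A_k=(a_k,\mathbf{On})$ lies in the cover; for any $x\in S$ with $x>a_k$, adjoining $A_k$ to a finite subcover of $(\mathbf{Off},x)$ yields a finite subcover of $\mathbf{No}$, which is what we want.

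The decisive and most delicate case, which I expect to be the main obstacle, is when $s$ is a gap. Each $A_i$ falls into one of three combinatorial classes with respect to the Dedekind cut of $s$: entirely left of $s$ (both endpoints in the left class), entirely right of $s$ (both endpoints in the right class), or straddling $s$ (left endpoint in the left class, right endpoint in the right class); the fourth combination is ruled out by $a_i<b_i$. If no $A_i$ straddles $s$, then the unions $U$ and $V$ of the left and right families are disjoint open classes (each a union over a sub-proper-set of $I$ of open intervals) whose union is $\mathbf{No}$, and both are nonempty because numbers just below $s$ must be covered only by left intervals and numbers just above $s$ only by right intervals; this contradicts the connectedness of $\mathbf{No}$ guaranteed by Lemma~\ref{intcon} (applied to the convex class $\mathbf{No}$). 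Hence some $A_k$ does straddle $s$; picking $x\in S$ with $x>a_k$ (possible since $a_k$ is in the left class of $s=\sup S$) and adjoining $A_k$ to a finite subcover of $(\mathbf{Off},x)$ yields $b_k\in S$ with $b_k>s$, once more contradicting $s=\sup S$. The gap case therefore cannot occur, and the proof is complete.
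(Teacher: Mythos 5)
The paper does not prove this proposition at all --- it is imported verbatim from Fornasiero's paper \cite{For04} and used as a black box in the proof of Lemma~\ref{ips-lem-2} --- so there is no internal argument to compare yours against. Your blind proof is, as far as I can check, correct and self-contained given the paper's definitions. It is the classical ``creeping along'' least-upper-bound proof of Heine--Borel, transported to $\mathbf{No}$: the auxiliary class $S$, the supremum $s$, and the three-way case split on the nature of $s$ are all sound. The two points a careful reader will want to verify both hold up. First, your observation (i): since $I$ is a proper set, the left endpoints form a set $L$ by replacement, $\{\mid L\}$ is a legitimate number by Definition~\ref{defs-def-1}, and Theorem~\ref{defs-thm-1} makes it strictly smaller than every $a_i$ and hence uncovered --- this is exactly the technique the paper itself uses in Example~\ref{openexamp} to show $(\infty,\mathbf{On})$ is not open. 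Second, the gap case: the trichotomy of intervals relative to the cut of $s$ is exhaustive because no endpoint can equal a gap, the two unions $U,V$ are open since they are unions of open intervals over subsets of the proper set $I$, and invoking Lemma~\ref{intcon} is not circular since that lemma is proved earlier (Section~\ref{IVTsection}) without any appeal to compactness. Minor loose ends that do not affect validity: when $b_k=\mathbf{On}$ in the first and third cases you obtain the finite subcover outright rather than a contradiction, and empty intervals in the cover (e.g.\ with left endpoint $\mathbf{On}$) should be discarded at the outset; both are cosmetic. What your argument buys is a proof of the proposition that stays entirely within the paper's own toolkit (Definitions~\ref{defopen} and~\ref{defcompact}, Theorem~\ref{defs-thm-1}, Lemma~\ref{intcon}), making Section~\ref{integralFTC} independent of the external reference.
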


\begin{comment}
The proofs of the following lemmas and the EVT use the standard arguments (see Munkres' textbook~\cite{Mun75} for these proofs) but with modifications made to work with the notion of strong compactness.

\begin{lemma} \label{ips-lem-1}
Let $A$ be a subinterval of $\mathbf{No}$. Then $A$ is compact if every covering of $A$ by subintervals open in $\mathbf{No}$ has a finite subcovering.
\end{lemma}
\begin{proof}
Let $\mathcal{B} = \{\mathcal{B}_\alpha\}$ be a covering of $A$ by subintervals open in $A$. For each $\mathcal{B}_\alpha$, find a subinterval $\mathcal{C}_\alpha$ open in $\mathbf{No}$ such that $\mathcal{B}_\alpha = \mathcal{C}_\alpha \cap A$. Then $\mathcal{C} = \{\mathcal{C}_\alpha\}$ is a covering of $A$ by subintervals open in $\mathbf{No}$, so $\mathcal{C}$ has a finite subcovering, say $\{\mathcal{C}_n\}$, for some $n\in\mathbb{N}$. Thus, $\{\mathcal{B}_n\}$, a finite subcovering of $\mathcal{B}$, covers $A$, from which it follows that $A$ is compact.
\end{proof}
\end{comment}

\begin{lemma} \label{ips-lem-2}
Let $[a,b] \subset \mathbf{No}$ be any closed subinterval with $a,b \in \mathbf{No} \cup \{\mathbf{On}, \mathbf{Off}\}$ (not necessarily bounded). Then $[a,b]$ is strongly compact.
\end{lemma}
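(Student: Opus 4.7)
The plan is to deduce strong compactness of $[a,b]$ from the already-established strong compactness of $\mathbf{No}$ (Proposition~\ref{forn'slemma}) via the standard covering-lift argument: given a proper-set open covering of $[a,b]$, enlarge it to a proper-set open covering of $\mathbf{No}$, invoke compactness of $\mathbf{No}$, then discard the ``boundary'' pieces. The adaptations to the surreal setting are almost entirely bookkeeping.

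Concretely, I would first handle the existence clause of Definition~\ref{defcompact} by observing that $[a,b] = \mathbf{No} \cap [a,b]$ is itself a subinterval open in $[a,b]$ with endpoints in $\mathbf{No} \cup \{\mathbf{On}, \mathbf{Off}\}$, so the singleton $\{[a,b]\}$ is a valid covering indexed by a one-element set. For the substantive clause, take any covering $\{B_\alpha\}_{\alpha \in I}$ of $[a,b]$ by a proper set of subintervals open in $[a,b]$. For each $\alpha$, lift $B_\alpha$ to an open subinterval $C_\alpha$ of $\mathbf{No}$ satisfying $B_\alpha = C_\alpha \cap [a,b]$, via a routine case check on the shape of $B_\alpha$: if $B_\alpha = (c,d)$ set $C_\alpha = (c,d)$; if $B_\alpha = [a,d)$ set $C_\alpha = (a',d)$ for some $a' < a$ (or $a' = \mathbf{Off}$); symmetrically for $(c,b]$ and $[a,b]$. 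Then adjoin the intervals $(\mathbf{Off}, a)$ when $a \neq \mathbf{Off}$ and $(b, \mathbf{On})$ when $b \neq \mathbf{On}$; the resulting enlarged family is indexed over a proper set (it adds at most two indices) and covers $\mathbf{No}$. Apply Proposition~\ref{forn'slemma} to extract a finite subcovering of $\mathbf{No}$, throw away the two adjoined ``boundary'' intervals (which have empty intersection with $[a,b]$), and intersect the remaining pieces with $[a,b]$ to recover a finite subcollection of the original $\{B_\alpha\}$ that still covers $[a,b]$.

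The hard part is not really hard: it is mostly verifying that the lifted $C_\alpha$ genuinely remain subintervals with endpoints in $\mathbf{No} \cup \{\mathbf{On}, \mathbf{Off}\}$ (not arbitrary open subclasses of $\mathbf{No}$), and that the edge cases $a = \mathbf{Off}$ or $b = \mathbf{On}$ do not spoil the reduction. One conceptual point worth flagging is that the proper-set restriction in Definition~\ref{defcompact} is what makes the argument go through: adjoining two intervals preserves the proper-set property of the index class, which is precisely the hypothesis needed to invoke Proposition~\ref{forn'slemma} on the enlarged family. No feature of the argument requires $[a,b]$ to be bounded, so the same reasoning handles $[a, \mathbf{On}]$, $[\mathbf{Off}, b]$, and $[\mathbf{Off}, \mathbf{On}] = \mathbf{No}$ uniformly.
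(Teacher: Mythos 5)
Your proof is correct and follows essentially the same route as the paper's: lift the given covering to open subintervals of $\mathbf{No}$, adjoin boundary intervals to obtain a proper-set open covering of all of $\mathbf{No}$, invoke Proposition~\ref{forn'slemma}, and intersect the resulting finite subcovering back with $[a,b]$. The only (cosmetic) difference is that you adjoin $(\mathbf{Off},a)$ and $(b,\mathbf{On})$ directly, whereas the paper picks auxiliary points $c' \in (c,a]$ and $d' \in [b,d)$ relative to the union $(c,d)$ of the lifted intervals; your choice is in fact slightly cleaner since everything outside $[a,b]$ is then covered outright by the two adjoined intervals.
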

\begin{proof}
Let $\mathcal{A} = \{A_\alpha\}$ be a proper set of subintervals open in $[a,b]$ such that $\mathcal{A}$ is a covering of $[a,b]$ (notice that such a covering exists because $a,b \in \mathbf{No} \cup \{\mathbf{On}, \mathbf{Off}\}$). For each $\alpha$, we can find subinterval $B_\alpha$ open in $\mathbf{No}$ and with endpoints in $\mathbf{No} \cup \{\mathbf{On}, \mathbf{Off}\}$ such that $A_\alpha = [a,b] \cap B_\alpha$. Now $\bigcup_\alpha B_\alpha = (c,d)$ for some $c,d \in \mathbf{No}^{\mathfrak{D}}$. Then if $c' \in (c,a] \cap (\mathbf{No} \cup \{\mathbf{On}, \mathbf{Off}\})$, $d' \in [b,d) \cap (\mathbf{No} \cup \{\mathbf{On}, \mathbf{Off}\})$, and $\mathcal{B} = \{B_\alpha\}$, $$\mathcal{C} = \mathcal{B} \cup \{(\mathbf{Off},c') \cup (d', \mathbf{On})\}$$ is an open covering of $\mathbf{No}$ by a proper set of subintervals whose endpoints are in $\mathbf{No}\cup\{\mathbf{On},\mathbf{Off}\}$. By Lemma~\ref{forn'slemma}, $\mathcal{C}$ has a finite subcovering $\mathcal{C}'$. Then the covering $\{C \cap [a,b] : C \in \mathcal{C}'\}$ (after removing the empty set if it appears) is a finite subcovering of $\mathcal{A}$ that covers $[a,b]$. So, the closed interval $[a,b]$ is strongly compact.
\end{proof}

\begin{remark}
It is not necessarily true that if $X$ is strongly compact and $X' \subset X$ is closed in $X$, then $X'$ is strongly compact. For example, the interval $(\infty, \mathbf{On})$ is closed because its complement $(\mathbf{Off}, \infty)$ is open, but $(\infty, \mathbf{On})$ does not have a covering by a proper set of open subintervals with endpoints in $\mathbf{No} \cup \{\mathbf{On}, \mathbf{Off}\}$.
\end{remark}

In order for the EVT to hold, we need to restrict the functions we consider to those that are \emph{strongly continuous}:

\begin{defn}\label{defstrongcont}
Let $A \subset \mathbf{No}$, and let $f : A \to \mathbf{No}$ be a function. Then $f$ is \emph{strongly continuous} on $A$ if $f$ is continuous on $A$ and if for every strongly compact bounded $A' \subset A$, there exists a covering of $f(A')$ by a proper set of subintervals open in $f(A')$ and with endpoints in $\mathbf{No} \cup \{\mathbf{On}, \mathbf{Off}\}$.
\end{defn}

\begin{example}\label{strongcontexamp}
Without the added condition of Definition~\ref{defstrongcont}, the EVT would not hold. Indeed, consider the function $f$ defined as follows. Let $g$ be the gap defined by $$g = \sum_{\alpha \in \mathbf{On}} (-1)^\alpha \cdot \omega^{-\alpha},$$
where $(-1)^\alpha$ is taken to be $1$ when $\alpha$ is a limit-ordinal. Let $g_\beta$ be the $\beta^{\mathrm{th}}$ partial sum in the normal form of $g$. Then let $f(g_\beta)$ be given as follows:
$$f(g_\beta) = \sum_{\alpha \in \mathbf{On}_{\leq \beta}} \omega^{-\alpha}.$$
For all $x > g_0$, let $f(x) = f(g_0)$ and for all $x < g_1$, let $f(x) = f(g_1)$. For all other $x$, let $f$ be the piecewise-linear function that joins the values of $f$ at the points $g_\beta$. Then notice that $f$ is strictly increasing on $(g_1,g)$ and is strictly decreasing on $(g,g_0)$, so $f$ fails to attain a maximum value on $[g_1,g_0]$, even though $f$ is continuous; i.e. $f$ violates the EVT. But we also claim that $f$ is not strongly continuous. Indeed, notice that $f([g_1, g_0]) = [1,h)$, where $h = \sum_{\alpha \in \mathbf{On}} \omega^{-\alpha}$, and there does not exist a cover of $[1,h)$ by a proper set of subintervals open in $[1,h)$ and with endpoints in $\mathbf{No} \cup \{\mathbf{On}, \mathbf{Off}\}$. This example justifies the terminology ``strongly continuous.''
\end{example}

\begin{lemma} \label{ips-lem-3}
Let $A \subset \mathbf{No}$ be a strongly compact and bounded, and let $f: A \rightarrow \mathbf{No}$ be strongly continuous. Then $f(A)$ is strongly compact.
\end{lemma}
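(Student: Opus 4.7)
The plan is to verify both clauses of Definition~\ref{defcompact} for $f(A)$: first, the existence of a covering of $f(A)$ by a proper set of subintervals open in $f(A)$ with endpoints in $\mathbf{No}\cup\{\mathbf{On},\mathbf{Off}\}$; second, that every such covering admits a finite subcovering.

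The existence clause is immediate. Applying strong continuity of $f$ (Definition~\ref{defstrongcont}) to the strongly compact bounded subclass $A'=A$ itself yields exactly such a covering of $f(A)$. This is in fact the reason for building the auxiliary condition into Definition~\ref{defstrongcont}: without it, the image of a strongly compact class need not even admit any covering of the required form, as Example~\ref{strongcontexamp} shows.

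For the finite subcovering clause, let $\mathcal{B}=\{B_\alpha\}_{\alpha\in I}$ be an arbitrary covering of $f(A)$ by a proper set of subintervals open in $f(A)$ with endpoints in $\mathbf{No}\cup\{\mathbf{On},\mathbf{Off}\}$. Since $f$ is continuous, each preimage $f^{-1}(B_\alpha)$ is open in $A$, so we may write $f^{-1}(B_\alpha)=A\cap U_\alpha$ for some open $U_\alpha\subseteq\mathbf{No}$, and by Definition~\ref{defopen} we may expand $U_\alpha=\bigcup_{j\in J_\alpha}V_{\alpha,j}$ as a union over a proper set of open subintervals of $\mathbf{No}$ with endpoints in $\mathbf{No}\cup\{\mathbf{On},\mathbf{Off}\}$. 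Using the remark after Definition~\ref{defcompact}, write $A=\bigcup_{m=1}^{N}A_m$ as a finite union of subintervals with endpoints in $\mathbf{No}\cup\{\mathbf{On},\mathbf{Off}\}$. The collection $\{A_m\cap V_{\alpha,j}:1\le m\le N,\ \alpha\in I,\ j\in J_\alpha\}$ is then indexed by a proper set, covers $A$, and consists of subintervals open in $A$ whose endpoints (obtained by intersecting the endpoints of each $A_m$ with those of each $V_{\alpha,j}$) lie in $\mathbf{No}\cup\{\mathbf{On},\mathbf{Off}\}$. Strong compactness of $A$ then extracts a finite subcovering $\{A_{m_k}\cap V_{\alpha_k,j_k}\}_{k=1}^{n}$, and the associated finite collection $\{B_{\alpha_k}\}_{k=1}^{n}$ covers $f(A)$: any $y=f(x)\in f(A)$ has $x\in A_{m_k}\cap V_{\alpha_k,j_k}\subseteq f^{-1}(B_{\alpha_k})$ for some $k$, hence $y\in B_{\alpha_k}$.

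The main obstacle is purely bookkeeping rather than conceptual: we must ensure that the intermediate cover of $A$ produced by pulling $\mathcal{B}$ back along $f$ actually meets the interval-and-endpoint hypotheses demanded by strong compactness of $A$. Plain continuity only produces an open cover of $A$ by open classes, not by subintervals, so we must manually refine the preimages using the finite decomposition of $A$ into interval components together with the interval decomposition of each $U_\alpha$ furnished by Definition~\ref{defopen}. Once this refinement is in hand, strong compactness of $A$ does the rest, and tracing the finite subcover back through the refinement yields the desired finite subcovering of $\mathcal{B}$.
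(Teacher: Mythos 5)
Your proof is correct and follows essentially the same route as the paper's: obtain the existence of a covering of $f(A)$ from strong continuity applied to $A'=A$, pull an arbitrary such covering back through $f$, decompose the preimages into a proper set of open subintervals, invoke strong compactness of $A$, and push the resulting finite subcover forward. Your only addition is the explicit refinement by the finite interval decomposition of $A$ to guarantee the pulled-back cover consists of subintervals open in $A$ with admissible endpoints — a bookkeeping point the paper's proof passes over more quickly.
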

\begin{proof}
Let $\mathcal{A} = \{A_\alpha\}$ be a covering of $f(A)$ by a proper set of subintervals open in $A$ and with endpoints in $\mathbf{No} \cup \{\mathbf{On}, \mathbf{Off}\}$ (such a covering exists because $A$ is strongly compact and bounded and $f$ is strongly continuous). For each $\alpha$, we can find subinterval $B_\alpha$ open in $\mathbf{No}$ and with endpoints in $\mathbf{No} \cup \{\mathbf{On}, \mathbf{Off}\}$ such that $A_\alpha = f(A) \cap B_\alpha$. Then let $\mathcal{B} = \{B_\alpha\}$. Note that for all $B\in\mathcal{B}$, $f^{-1}(B)$ is a union over a proper set of subintervals open in $\mathbf{No}$ and with endpoints in $\mathbf{No} \cup \{\mathbf{On}, \mathbf{Off}\}$ by Definition~\ref{defcont} because $f$ is continuous. In this regard, denote $f^{-1}(B) = \bigcup_{\alpha \in S_B} C_{B,\alpha}$, where $C_{B,\alpha}$ is a subinterval open in $\mathbf{No}$ with endpoints in $\mathbf{No} \cup \{\mathbf{On}, \mathbf{Off}\}$ for each $\alpha$ and the union is taken over a proper set $S_B$. Then $\mathcal{C} = \{f^{-1}(C_{B,\alpha})\cap A : B\in\mathcal{B}, \alpha \in S_B\}$ is a covering of $A$ with subintervals open in $A$ and with endpoints in $\mathbf{No} \cup \{\mathbf{On}, \mathbf{Off}\}$. Since $A$ is strongly compact, $\mathcal{C}$ has a finite subcovering, say $\{f^{-1}(C_i)\cap A : i \in \mathbb{N}_{\leq n}\}$ for some $n \in \mathbb{N}$. For each $i$, there exists $B_i \in \mathcal{B}$ such that $C_i \subset B_i$. Then $\{B_i \cap f(A): i \in \mathbb{N}_{\leq n}\}$ is a finite subcovering of $\mathcal{A}$ that covers $f(A)$. So, $f(A)$ is strongly compact.
\end{proof}

\begin{theorem}[EVT] \label{ips-cor-1}
Let $A \subset \mathbf{No}$ be a strongly compact and bounded, and let $f: A \rightarrow \mathbf{No}$ be strongly continuous and bounded. Then there exists $c,d \in A$ such that $f(c) \leq f(x) \leq f(d)$ for all $x \in A$.
\end{theorem}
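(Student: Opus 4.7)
The plan is to mimic the classical EVT proof, using strong compactness of $A$ as the surrogate for real compactness. I will argue for the supremum $M = \sup f(A)$; the infimum case is entirely symmetric. By Lemma~\ref{ips-lem-3}, $f(A)$ is strongly compact, and by hypothesis it is bounded, so the remark following Definition~\ref{defcompact} writes $f(A)$ as a finite union of subintervals whose endpoints all lie in $\mathbf{No}$; in particular $M \in \mathbf{No}$.

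Next, suppose for contradiction that $M \notin f(A)$, so that every $x \in A$ satisfies $f(x) < M$. For each such $x$, set $\eta_x = (M - f(x))/2 > 0$ and invoke continuity of $f$ at $x$ to produce $\delta_x > 0$ with $f((x-\delta_x, x+\delta_x) \cap A) \subset (\mathbf{Off}, M - \eta_x)$. The collection $\{(x-\delta_x, x+\delta_x) \cap A : x \in A\}$ is an open cover of $A$, but it is indexed by $A$, which may be a proper class. To bring it within the scope of strong compactness, I refine it to a proper-set cover. I start with a proper-set cover $\{U_\alpha\}_{\alpha \in I}$ of $A$ guaranteed by Definition~\ref{defcompact}, and on each $U_\alpha$ combine the continuity-derived neighborhoods above with the fact, from Definition~\ref{defopen}, that every open class is a proper-set union of open intervals. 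This produces a proper-set cover of $A$, each of whose pieces is contained in some $(x-\delta_x, x+\delta_x)$; strong compactness of $A$ then delivers a finite subcover.

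On each piece of the finite subcover, $f$ is bounded above by $M - \eta_{x_i}$ for some $x_i \in A$, so $f(A) \subset (\mathbf{Off}, M - \min_i \eta_{x_i})$, contradicting $\sup f(A) = M$. Hence $M \in f(A)$, and some $d \in A$ realizes $f(d) = M$; the analogous argument at the infimum produces $c$. The hard part will be the refinement step: extracting from the proper-class-indexed family $\{(x-\delta_x, x+\delta_x) \cap A\}_{x \in A}$ a proper-set subcover that still enjoys the uniform-bound property. I expect the full force of strong continuity --- not merely its consequence via Lemma~\ref{ips-lem-3} but its direct guarantee of a nice proper-set cover of $f(A)$ --- to be essential in carrying out this refinement cleanly.
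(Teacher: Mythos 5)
Your first step --- $f(A)$ is strongly compact by Lemma~\ref{ips-lem-3}, hence a finite union of subintervals with endpoints in $\mathbf{No}\cup\{\mathbf{On},\mathbf{Off}\}$, so that boundedness forces $M=\sup f(A)\in\mathbf{No}$ --- is in substance the paper's own first step and is fine. The attainment step is where the proposal breaks: you defer the entire difficulty to a ``refinement step'' that you never carry out, and that step cannot be carried out by the means you indicate. What you need is a cover of $A$ indexed by a \emph{proper set} $S$ by open pieces $V_\beta$ with $f(V_\beta)\subset(\mathbf{Off},M-\eta_\beta)$ for some $\eta_\beta>0$. But if such a cover existed you would already be done with no finite subcover at all: $f(A)\subset\bigcup_{\beta\in S}(\mathbf{Off},M-\eta_\beta)$, and a proper \emph{set} of numbers all $<M$ has supremum strictly below $M$, because $\{\{M-\eta_\beta\}_{\beta\in S}\mid\{M\}\}$ is a legitimate number lying strictly between all of them and $M$ by Theorem~\ref{defs-thm-1}; this contradicts $\sup f(A)=M$ outright. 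So the refined cover is exactly as strong as the conclusion you are trying to prove, and the strong compactness of $A$ does no work in your argument. Moreover, continuity does not hand you such a cover: it gives a family $\{(x-\delta_x,x+\delta_x)\}$ indexed by the proper class $A$, and Definition~\ref{defopen} only tells you that an open class is \emph{some} proper-set union of intervals --- it gives you no way to choose those intervals subordinate to the $\delta_x$-neighborhoods. This failure of Lebesgue-style refinement is precisely the pathology that the proper-set restriction in the topology is designed around, and it is why the classical domain-side proof of the EVT does not transplant to $\mathbf{No}$.

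The paper avoids the issue by staying on the image side throughout: having shown that $c=\inf f(A)$ and $d=\sup f(A)$ are numbers (using that a gap supremum would preclude any finite subcover by intervals with endpoints in $\mathbf{No}\cup\{\mathbf{On},\mathbf{Off}\}$), it argues that if $d$ were not attained then $f(A)$ would be half-open at its top and hence not strongly compact, contradicting Lemma~\ref{ips-lem-3}. To repair your write-up, replace the covering argument on $A$ with an argument about covers of $f(A)$ itself near its supremum; that is both shorter and actually exploits the strong compactness you established in your first paragraph.
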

\begin{proof}
Because $A$ is a strongly compact and bounded and $f$ is strongly continuous, $f(A)$ is strongly compact by Lemma~\ref{ips-lem-3}. Suppose that $\sup(f(A))$ is a gap. Since $f$ is bounded, we have that $\sup(f(A)) < \mathbf{On}$. Then $f(A)$ cannot be covered by a finite number of open subintervals with endpoints in $\mathbf{No} \cup \{\mathbf{On}, \mathbf{Off}\}$, so $f(A)$ is not strongly compact, which is a contradiction. Thus, $\sup(f(A)) = d \in \mathbf{No}$, and by a similar argument, $\inf(f(A)) = c \in \mathbf{No}$. Now suppose that $f$ does not attain an absolute maximum value on $A$. Then, $f(A)$ is either $[c,d)$ or $(c, d)$ and is therefore not strongly compact, which is again a contradiction. So, $f$ attains an absolute maximum value $d$ on $A$, and by a similar argument, $f$ attains an absolute minimum value $c$ on $A$.
\end{proof}

 To prove the FTC on $\mathbf{No}$, we also need (at least) a characterization of the definite integral of a function $f$ on the interval $[a,b]$ that works for all strongly continuous bounded functions $f$. (Our extrapolative method of evaluating Riemann sums only works for functions that satisfy the conditions of Definition~\ref{ips-def-4}.) We present our characterization of integration as follows:

\begin{defn}\label{intdef}
The definite integral of a strongly continuous bounded function $f$ on an interval $[a,b]$ with $a,b \in \mathbf{No}$ is a function $T(a,b)$ that satisfies the following three properties: (1) If for all $x \in [a,b]$ we have that $f(x) = c$ for some $c \in \mathbf{No}$, $T(a,b) = c(b-a)$. (2) If $m$ is the absolute minimum value of $f$ on $[a,b]$ and $M$ is the absolute maximum value of $f$ on $[a,b]$ ($m,M$ exist by Lemma~\ref{ips-lem-2} and Theorem~\ref{ips-cor-1}), then $m(b-a) \leq T(a,b) \leq M(b-a)$; and (3) for any number $c \in [a,b]$, $T(a,c) + T(c,b) = T(a,b)$.
\end{defn}

\begin{remark}
Note that in Definition~\ref{intdef}, we do not characterize $T(a,b)$ completely; we merely specify the properties that a definite integral must have in order for the FTC to be true. In fact, $T(a,b)$ can be any function having these properties, and the FTC will still hold. Observe that if we only consider functions that satisfy the requirements of Definition~\ref{ips-def-4}, our extrapolative notion of Riemann sums does satisfy Definition~\ref{intdef}.
\end{remark}

We are now ready to state and prove the FTC on $\mathbf{No}$.

\begin{theorem}[FTC]\label{ftc}
If $f$ is strongly continuous and bounded on $[a,b] \subset \mathbf{No}$, then the function $g$ defined for all $x \in [a,b]$ by
$g(x) = \int_a^x f(t) dt$
is weakly continuous on $[a,b]$ and satisfies $g'(x) = f(x)$ for all $x \in (a,b)$.
\end{theorem}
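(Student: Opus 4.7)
The plan is to mimic the standard real-analysis proof of the FTC, relying only on the three axiomatic properties of the integral from Definition~\ref{intdef} together with the surreal EVT (Theorem~\ref{ips-cor-1}) and Theorem~\ref{diff-thm-1} to pass between the Dedekind and $\varepsilon$-$\delta$ notions of limit. First I would establish weak continuity of $g$ at an arbitrary $x_0 \in [a,b]$. For any $h$ with $x_0 + h \in [a,b]$, property (3) of Definition~\ref{intdef} gives $g(x_0+h) - g(x_0) = T(x_0, x_0+h)$ when $h > 0$ and $g(x_0+h) - g(x_0) = -T(x_0+h, x_0)$ when $h < 0$. Letting $B$ be a bound for $|f|$ on $[a,b]$, property (2) applied to the constant bounds $-B$ and $B$ on the relevant subinterval yields $|g(x_0+h) - g(x_0)| \leq B \cdot |h|$. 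Picking $\delta = \varepsilon/(B+1)$ and invoking Theorem~\ref{diff-thm-1}, this shows $\lim_{h \to 0} g(x_0+h) = g(x_0)$, i.e., weak continuity of $g$.

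For the derivative claim at $x \in (a,b)$, I would analyze the difference quotient $(g(x+h) - g(x))/h$ for small nonzero $h$. Using property (3) as above, this quotient equals $T(x, x+h)/h$ when $h > 0$ and $T(x+h, x)/(-h)$ when $h < 0$. The closed subinterval with endpoints $x$ and $x+h$ is strongly compact by Lemma~\ref{ips-lem-2} and bounded, and the restriction of $f$ to it remains strongly continuous and bounded. The surreal EVT (Theorem~\ref{ips-cor-1}) then produces a minimum $m_h$ and maximum $M_h$ of $f$ on this subinterval, and property (2) squeezes the difference quotient: $m_h \leq (g(x+h) - g(x))/h \leq M_h$, the two sign cases collapsing cleanly because the length factor $|h|$ divides out. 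Since $f$ is continuous, Proposition~\ref{contprop} gives weak continuity of $f$ at $x$: for every $\varepsilon > 0$ there is $\delta > 0$ with $|f(t) - f(x)| < \varepsilon$ whenever $|t - x| < \delta$. Taking $|h| < \delta$ forces $|m_h - f(x)| \leq \varepsilon$ and $|M_h - f(x)| \leq \varepsilon$, hence $\left|(g(x+h) - g(x))/h - f(x)\right| \leq \varepsilon$. A final application of Theorem~\ref{diff-thm-1} then yields $g'(x) = f(x)$.

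The main obstacle will be less the calculation itself, which is essentially a transcription of the real-variable argument, and more the bookkeeping needed to confirm that every hypothesis of the supporting results is met in the surreal setting. In particular, one must verify that strong continuity and boundedness of $f$ descend to subintervals (any strongly compact bounded subclass of the smaller interval is also one in the larger, so the covering condition of Definition~\ref{defstrongcont} is inherited), that Lemma~\ref{ips-lem-2} supplies strong compactness of the subinterval with endpoints in $\mathbf{No}$, and that the squeeze reasoning transfers through the two-sided $\varepsilon$-$\delta$ characterization of Theorem~\ref{diff-thm-1}. Handling the sign of $h$ when $h < 0$ is a minor nuisance but introduces no genuine difficulty, and the flexibility granted by Definition~\ref{intdef} (which fixes only the properties of $T$ rather than its value) is exactly what allows the proof to go through for any admissible notion of surreal integration.
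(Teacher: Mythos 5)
Your proposal is correct and follows essentially the same route as the paper's own proof: use property (3) of Definition~\ref{intdef} to express the difference quotient as $T(x,x+h)/h$, apply the EVT to squeeze it between the minimum and maximum of $f$ on the subinterval, and pass to the limit via the $\varepsilon$-$\delta$ characterization of Theorem~\ref{diff-thm-1}. You are in fact somewhat more careful than the paper, which omits the $h<0$ case and the explicit verification of weak continuity of $g$.
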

\begin{proof}
The standard proof of the FTC from real analysis (see Spivak~\cite{Spi08} for this proof) works for surreals because: (1) We can find derivatives of functions using Definition~\ref{defs-def-6}; (2) the Extreme Value Theorem holds on $\mathbf{No}$; and (3) we have characterized integration in Definition~\ref{intdef}. We outline the proof as follows.

Pick $x, x+h \in (a,b)$. Then by Definition~\ref{intdef}, $g(x+h) - g(x) = \int_{x}^{x+h} f(t) dt$, so as long as $h \neq 0$,
\begin{equation} \label{ftcproof1}
\frac{g(x+h)-g(x)}{h} = \frac{1}{h}\int_{x}^{x+h}f(t)dt.
\end{equation}
Suppose $h> 0$. We know that $f$ is continuous on $[x,x+h]$, so by the EVT, there exist $c,d \in [x,x+h]$ such that $f(c)$ is the absolute minimum value of $f$ on $[x,x+h]$ and $f(d)$ is the absolute maximum value of $f$ on $[x,x+h]$. By Definition~\ref{intdef}, we know that $h \cdot f(c) \leq \int_x^{x+h}f(t)dt \leq h \cdot f(d)$, so substituting the result of (\ref{ftcproof1}) we have:
\begin{equation}
f(c) \leq \frac{g(x+h)-g(x)}{h} \leq f(d),
\end{equation}
which holds when $h<0$ too, but the argument is similar so we omit it. If we let $h \rightarrow 0$, it is clear by the Squeeze Theorem (which follows from Definition~\ref{defs-def-6} and Theorem~\ref{diff-thm-1}) that $g'(x) = f(x)$.
\end{proof}

The FTC tells us that given a suitable definition of integral, the integral of a function is also its primitive. It is therefore natural to wonder whether one can define integration on surreals by antidifferentiation. One issue with relying solely upon antidifferentiation to evaluate integrals is that surreal functions do not have unique primitives, even up to additive constant. For example, in the case of the function $f(x) = 1$, there are many possible strongly continuous primitives in addition to $F(x) = x$, including
\begin{equation} \label{piecefunc}
F(x)  =  \begin{cases} x & x < \infty\textcolor[rgb]{1.00,1.00,1.00}{,}\\ x-1 & x > \infty.     \end{cases}
\end{equation}
The piecewise function $F$ above is strongly continuous because both $x$ and $x-1$ approach $\infty$ as $x$ approaches $\infty$, which would not be the case if $x-1$ were replaced by, say, $x-\omega$ in the definition of $F$.

We might want the primitive of a genetic function to be genetic. One reason is that if we have a genetic definition of integration (i.e. an integral that yields a genetic formula when given the genetic formula of a function), then the integral of a genetic function will be genetic by construction. By the FTC, the integral of a function is also a primitive, and so we would want the primitive of a genetic function to be genetic. In this regard, we make the following conjecture:

\begin{conjecture}\label{primconj}
Let $f : A \to \mathbf{No}$ be a genetic function defined on a locally open subinterval $A \subset \mathbf{No}$. If there exists a genetic function $F : A  \to \mathbf{No}$ such that $F'(x) = f(x)$ for all $x \in A$, then $F$ is unique up to additive constant.
\end{conjecture}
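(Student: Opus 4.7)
The strategy is to reduce the conjecture to the claim that a genetic function $H : A \to \mathbf{No}$ with $H'(x) = 0$ throughout $A$ must be constant. Given two genetic primitives $F_1, F_2$ of $f$, set $H = F_1 - F_2$. Genetic functions are closed under differences (an easy extension of the closure under sums noted at the end of Example~\ref{funk1}, combined with closure under negation), and the additive limit law of Proposition~\ref{diff-thm-2} yields $H'(x) = F_1'(x) - F_2'(x) = 0$ everywhere on $A$.

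The core difficulty is that the classical Mean Value Theorem fails on $\mathbf{No}$ for merely strongly continuous functions: the piecewise primitive displayed in (\ref{piecefunc}) is strongly continuous with derivative $1$ everywhere yet differs from the identity function by a non-constant amount. Consequently, the genetic hypothesis on $H$ must be invoked essentially to preclude this kind of cross-gap jump.

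My plan proceeds in two stages. First, on any locally open subinterval $I \subset A$ that contains no gap internally, I would establish a restricted Mean Value Theorem for $H$ by adapting the standard Rolle-type argument: apply the surreal Extreme Value Theorem (Theorem~\ref{ips-cor-1}) to $H(x) - \tfrac{H(b)-H(a)}{b-a}(x-a)$ on any closed subinterval $[a,b] \subset I$. Strong continuity of $H$ on $[a,b]$ should follow from the strong continuity of the primitives $F_1, F_2$ together with additivity. Since $H' \equiv 0$ on $I$, the restricted MVT forces $H$ to be constant on each such $I$.

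The hard part will be to rule out cross-gap jumps. I would argue by structural induction on the genetic construction of $H$ that no symbolic combination of options $x^L, x^R$ appearing in $L_H(x^L, x^R)$ and $R_H(x^L, x^R)$ can encode a side-of-gap predicate: these expressions are drawn from the Ring $R(a,b)$ of Subsection~\ref{deffuncs}, generated over $\mathbf{No}$ by finitely many arithmetic operations and compositions with previously constructed genetic functions, and at any gap $g$ the options of $x$ for $x$ near $g$ interleave freely between the two sides of $g$. A symbolic expression in those options therefore cannot take two different constant values on opposite sides of $g$. Making this rigidity precise --- formally showing that no expression in the symbolic Ring $R(a,b)$ can detect the side of a gap occupied by $x$ --- is the step I expect to be the main obstacle, and it will likely require a careful normal-form analysis of that symbolic Ring together with the gap classification of Subsection~\ref{gapsadd}.
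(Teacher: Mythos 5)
This statement is Conjecture~\ref{primconj}: the paper offers no proof and explicitly leaves it open, so there is nothing to compare your argument against except the conjecture itself. Your proposal is a plan rather than a proof, and by your own admission the decisive step --- showing that no expression in the symbolic Ring $R(a,b)$ can ``detect the side of a gap'' --- is left unestablished. That step is essentially the entire content of the conjecture, so the proposal does not settle it.

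Beyond the admitted gap, two of the steps you treat as routine are already problematic. First, your ``first stage'' is vacuous: gaps are dense in $\mathbf{No}$ (the introduction notes that every locally open interval $(a,b)$ splits as $(a,g)\cup(g,b)$ for some gap $g$), so there are no nondegenerate subintervals ``containing no gap internally,'' and the Rolle/EVT argument would have to be run on intervals that do straddle gaps --- precisely where the counterexample (\ref{piecefunc}) shows it fails. Second, the reduction to $H=F_1-F_2$ does not inherit the regularity you need: taking $F$ as in (\ref{piecefunc}) with $F_1=F$ and $F_2=\mathrm{id}$, the difference $H$ takes the value $0$ on $(\mathbf{Off},\infty)$ and $-1$ on $(\infty,\mathbf{On})$; the preimage under $H$ of a small neighborhood of $-1$ is $(\infty,\mathbf{On})$, which Example~\ref{openexamp} shows is not open, so $H$ is not even continuous in the sense of Definition~\ref{defcont}, let alone strongly continuous, and the EVT is unavailable for the auxiliary Rolle function. (Relatedly, the closure under sums asserted at the end of Example~\ref{funk1} applies only to functions defined on all of $\mathbf{No}$ by the symbolic process; for genetic functions on a proper subinterval, which may require ``verbal'' conditions, closure under differences is not established in the paper.) The structural-induction rigidity argument would therefore have to carry the entire proof on its own, and until it is made precise the conjecture remains open.
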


If Conjecture~\ref{primconj} is true, we would be able to integrate a genetic function $f$ using the method of antidifferentiation, for it would just suffice to find a genetic primitive $F$. Note that this method of integration would work for all surreal functions which have known genetic primitives, not just functions for which we can evaluate limits of Riemann sums using the method of extrapolation in combination with Definition~\ref{defs-def-5}.

\section{Open Questions} \label{conc}

\noindent Several open questions remain. In order to complete the analogy between real and surreal functions, consistent genetic formulae of other transcendental functions, such as sine, along with their necessary properties remain to be found. The most significant open problem that remains in surreal analysis is finding a genetic formula or a Dedekind representation for the definite integral of a function. Such a definition of integration must also satisfy the requirements of Definition~\ref{intdef}.

Two other aspects of real analysis that remain incomplete for surreals are series and differential equations. A method of evaluating series in greater generality remains to be developed, one that does not depend on the form of the $n^\mathrm{th}$ partial sum. In addition, using such a method to evaluate power series should allow basic properties, such as $f(x)=(\mathrm{power~series~of~}f(x))$ on its region of convergence, to hold. To extend surreal analysis even further, it is necessary to investigate functions of multiple variables as well as more general versions of the results presented in this paper. For example, a future study could consider proving a surreal version of Stokes' Theorem as a generalization of the FTC once a consistent theory of surreal differential forms has been developed.

A comprehensive study of differential equations remains to be performed, and as part of such a study, many questions should be answered. The following are two possible questions about the behavior of an analytic function $f$ under basic calculus operations that such a study should answer: (1) What is $\frac{d^\alpha}{{dx}^\alpha}f(x)$ for any $\alpha \in \mathbf{On}$?; and (2) Does the function that results from integrating $f$ some ordinal $\alpha$ number of times and then differentiating $\alpha$ times equal $f$ for all $\alpha \in \mathbf{On}$? Finding answers to such questions would help us understand surreal differential equations and would determine whether surreal differential equations are more general than their real analogues.

\section*{Acknowledgements}
We thank Ovidiu Costin and Antongiulio Fornasiero for pointing us to important resources and clarifying their own work on surreal numbers. We thank Philip Ehrlich for his insightful comments on the paper and for pointing us to the work of Sikorski~\cite{Sik48}. We also thank Daniel Kane, Jacob Lurie, Lyle Ramshaw, Rick Sommer, and the anonymous referees for their useful suggestions regarding the content and the presentation of this paper. A large portion of this research was conducted while the second author was a student at The Harker School, San Jose, CA 95129, USA.

\bibliographystyle{alpha}
\bibliography{bibfilev2}
\end{document}